\definecolor{darkgreen}{cmyk}{1,0,1,.2}
\definecolor{m}{rgb}{1,0.1,1}
\def\cE{{\mathcal E}}
\def\xg{{X^G_{\mathcal{P}(\omega)}}}
\def\hu{${X^G_{T}}$}
\def\cT{{\mathcal T}}
\def\cP{{\mathcal P}}
\def\H{{\mathbb H_{2}}}
\def\rt{{\rtimes}}
\def\af{\mathcal{A}}
\def\afx{\mathcal{A}^G_{\mathcal{P}(\omega)}}
\def\al{{\alpha}}
\def\be{{\beta}}
\def\rta{{\rtimes_\alpha}}
\newcommand\co{\operatorname{Coinv}}
\newcommand\inv{\operatorname{Inv}}
\newcommand\sh{{\#}}
\newcommand\xb{{X_{\beta^0}}}
\newcommand\yb{{Y_{\beta^0}}}
\newcommand\zb{{Z_{\beta^0}}}
\newcommand\zbu{{Z_{\beta^1}}}
\newcommand\C{\mathbb C}
\newcommand\cN{\mathcal N}
\newcommand\cA{\mathcal A}
\newcommand\N{\mathbb N}
\renewcommand\P{\mathcal{P}}
\newcommand\R{\mathbb R}
\newcommand\cR{\mathcal R}
\newcommand\Z{\mathbb Z}
\newcommand\E{\mathcal E}
\newcommand\T{\mathcal{T}}
\newcommand\K{\mathcal{K}}
\newcommand\ts{{\otimes}}
\newcommand\si{\sigma}
\newcommand\Ga{{\Gamma}}
\newcommand\ga{{\gamma}}
\newcommand\lto{{\longrightarrow}}
\newcommand\defi{{\stackrel{\text{def}}{=\!=}}}
\newcommand\G{\mathcal{G}}
\renewcommand\L{\mathcal{L}}
\theoremstyle{plain}
\newtheorem{theorem}{Theorem}[section]
\newtheorem{proposition}[theorem]{Proposition}
\newtheorem{corollary}[theorem]{Corollary}
\newtheorem{lemma}[theorem]{Lemma}
\newtheorem{remark}[theorem]{Remark}
\newtheorem{definition}[theorem]{Definition}
\newtheorem{defn}[theorem]{Definition}
\newtheorem{prop}[theorem]{Proposition}
\newtheorem{ex}[theorem]{Example}
 \title[$C^*$-algebras of Penrose's hyperbolic tilings]{$C^*$-algebras
   of Penrose hyperbolic tilings}  
 \author[Oyono-Oyono]{Herve Oyono-Oyono}
\address{Laboratoire de Math\'ematiques, Universit\'e Blaise Pascal \&
  CNRS (UMR 6620),
Les C\'ezeaux, 24 avenue des Landais,
BP 80026, 63177 Aubi\`ere Cedex, France, phone:+33.4.73.40.70.71, fax:+33.4.73.40.70.64}
 \email{oyono@math.cnrs.fr}
 \author[Petite]{Samuel Petite}
\address{L.A.M.F.A, Universit\'e de Picardie Jules Verne \& CNRS, Amiens,
  France,
33 rue Saint-Leu,
80039 Amiens Cedex 1,
France,
phone:+33.3.22.82.78.15
fax:+33.3.22.82.78.38}
 \email{samuel.petite@u-picardie.fr}
\numberwithin{equation}{section}
\begin{document}
\begin{abstract} Penrose hyperbolic tilings are tilings of the
  hyperbolic plane which admit, up to affine transformations a finite
  number of prototiles. In this paper, we give  a complete
  description of the $C^*$-algebras and of the $K$-theory for  such tilings. Since the continuous hull of these tilings have no
  transversally invariant measure, these  $C^*$-algebras are traceless. Nevertheless, harmonic currents give
  rise to $3$-cyclic cocycles and we discuss in this setting a
  higher-order version of the gap-labelling.
\end{abstract}\maketitle
\begin{flushleft}{\it {\bf Keywords:}  Hyperbolic aperiodic tilings,
    $C^*$-algebras of dynamical systems, K-theory, Cyclic cohomology}

{\it {\bf 2000 Mathematics Subject Classification:} 
 37A55, 37B051, 46L55, 46L80}

{\it {\bf Subject Classification:} Noncommutative topology
    and geometry, Dynamical systems} 
\end{flushleft}

\section{Introduction}
The non-commutative geometry of a quasi-periodic tiling studies an
appropriate $C^*$-algebra of a dynamical system $(X,G)$, for a compact
metric space $X$, called {\em the hull}, endowed with a continuous Lie
group $G$  action. This $C^*$-algebra is of relevance to study the
space of leaves which is pathological in any topological sense.
The hull owns also a geometrical structure of  {\em lamination} or
{\em foliated space}, the transverse structure being just metric \cite{Gh}.  The $C^*$-algebras and the non-commutative tools provide then topological and geometrical invariants for the tiling or the lamination. Moreover, some $K$-theoretical invariants of Euclidean  tilings have a physical interpretation. In particular, when the tiling represents a  quasi-crystal,  the image of the  $K$-theory 
 under the canonical trace 
  labels  the gaps in the spectrum of the Schr\"odinger operator associated with the quasi-crystal \cite{Bel}. 

For an Euclidean tiling,  the group $G$ is $\R^d$ and $\R^d$-invariant
ergodic probability  measures on the hull are in one-to-one
correspondence with  ergodic  transversal invariant measures  and also
with extremal traces on the $C^*$-algebra \cite{BBG}. These algebras are well  studied and this leads, for instance,  to give distinct proofs of  the {\em gap labelling conjecture} \cite{BBG, BeO, KaPu}, i.e.  for minimal $\R^d$-action, the image of the $K$-theory  
under a trace  is the countable subgroup of $\R$ generated by the
images under the corresponding transversal invariant measure  of  the compact-open
subsets   of the (Cantor)  canonical transversal.

For a hyperbolic quasi-periodic tiling, the situation is quite
distinct. The group of affine transformations acts on the hull  and since this
group is not unimodular, there is no transversally invariant measure
\cite{Pl}. A new phenomena    shows up  for  the $C^*$-algebra of the
tiling:  it has no trace. Nevertheless, the affine group is amenable,
so the hull admits at least one  invariant probability measure. These
measures are actually in one-to-one correspondence with   harmonic
currents  \cite{Pe}, and they  provide $3$-cyclic cocycles on the
smooth algebra of the tiling.

The present paper is devoted to give a complete description of the
$C^*$-algebra and the $K$-theory of a specific family of hyperbolic
tilings derivated from the example given by Penrose in \cite{Pen}. The
dynamic of the hulls under investigation, have a structure of double
suspension (this make sens in term of groupoids as we shall see in
section \ref{subsec-groupoid-suspension}) which enables to make
explicit  computations.  {This suggests  that the pairing with the
$3$-cyclic cocycle is closely related to the}
 one-dimension gap-labelling
for { a subshift}  associated with the tiling. But the  right setting to
state an analogue of the gap-labelling  seems to be    Frechet
algebras and a natural question is   whether this bring in new computable invariants.

Background on  tiling spaces is given in the next section and  we construct examples of hyperbolic quasi-periodic tilings in the third section.  A description of the considered hulls is given in
section 4. In section 5, we recall the background on the groupoids
and their $C^*$-algebras. {Sections 6 and 7 are devoted to the complete
description of the $C^*$-algebras of the examples and   their $K$-theory groups in terms of generetors are given in section $8$. For readers interested in topological invariants of the hull, we compute  its  $K$-theory and its  C\v ech cohomology   and we relate these computations to the former one.} In the last  section   we construct  $3$-cyclic cocycles associated to these tilings and  we discuss an odd version of the gap-labelling.

\section{Background on tilings}

Let $\H$ be the real  hyperbolic $2$-space,  identified with the upper half
complex plane: $\{ (x,y) \in \R^2 \ | y> 0 \}$ with the metric $ds^2 =
\frac{dx^2+dy^2}{y^2}$.  We denote by $G$ the group of {\em affine transformations} of this space: i.e.   the isometries of $\H$ of the  kind $z \mapsto az+b$ with $a$, $b$ reals and $a>0$.

A tiling $T= \{t_1, \ldots, t_n, \ldots \}$ of $\H$, is a collection of convex compact polygons
$t_i$ with geodesic borders, called {\em tiles}, such that their union is the whole space $\H$,
their interiors are pairwise disjoint and they meet full edge to full edge.  {For instance, when $F$ is a fundamental domain of a  co-compact lattice $\Gamma$ of isometries of $\H$, then $\{\gamma
(F), \ \gamma \in \Gamma \}$ is a tiling of $\H$.  However the set of tilings is much richer
than the one given by lattices as we should see later on. Similarly to the
Euclidian case, a  tiling is said of $G$-{\em finite type} or {\em finite affine type},  if there exists a finite number of polygons $\{ p_1, \ldots, p_n \}$ called {\it prototiles} such
that each $t_i$ is the image of one of these polygons by an element of $G$.} 
  Besides   its famous Euclidean tiling, Penrose in
\cite{Pen}  constructs a finite affine type tiling made with a single prototile which is not stable for
any Fuchsian group. The construction goes as follows.

\subsection{Hyperbolic Penrose's tiling}\label{hPT}

\noindent Let $P$ be the convex polygon with vertices $A_p$ with affix $(p-1)/2+ \imath$ for $1\leq p
\leq 3$ and $A_{4} :2 \imath+ 1$ and $A_5 :2\imath $  $P$ is a polygon with $5$
geodesic edges. Consider the two maps:
$$ R: z \mapsto 2z \ {\rm and }\ S: z \mapsto z+1. $$

  \begin{figure}[t]
  \begin{center}
  \includegraphics{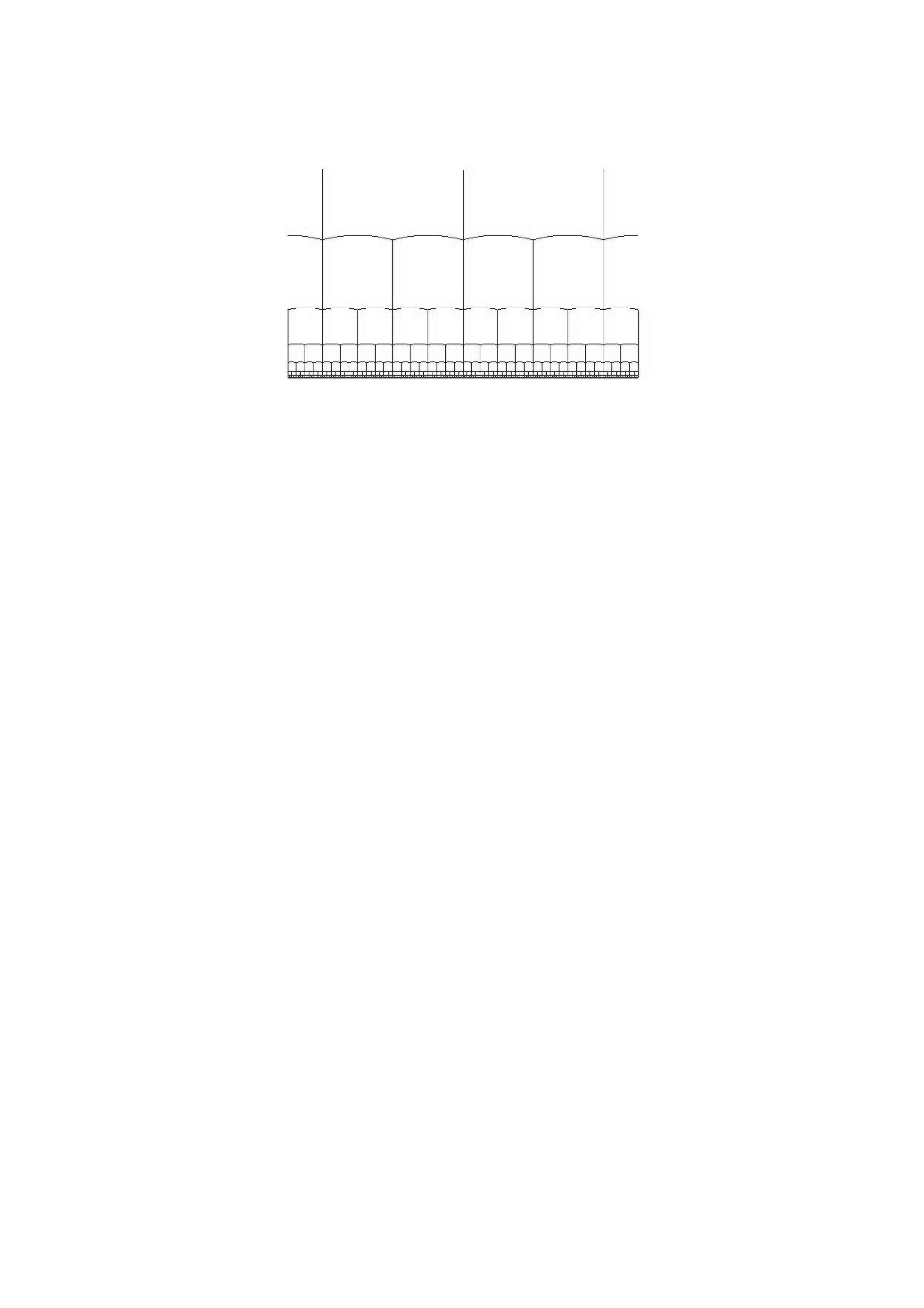}
  \end{center}
  \caption{The hyperbolic Penrose's tiling} \label{Pentil}
  \end{figure}

\noindent The hyperbolic Penrose's tiling is defined by ${\mathcal P}=\{ R^{k}\circ S^n P | \ n,k \in
\Z \}$ (see figure \ref{Pentil}). This is an example of finite affine type  tiling of $\H$. 

This tiling is stable under no  co-compact group of hyperbolic isometries. The proof
is homological: we associate with the edge $A_4A_5$ a
positive charge and two negative charges with edges $A_1A_2$, $A_2A_3$. If $\cP$ was stable for a
Fuchsian group, then $P$ would tile a compact surface. Since the edge $A_4A_5$ can meet only the
edges $A_1A_2$ or $A_2A_3$, the surface has a neutral charge. This is in contradiction with the
fact $P$ is negatively charged.

\noindent G. Margulis and S. Mozes \cite{MarMoz} have generalized this construction to build a
family of prototiles which cannot be used to tile a compact surface. Notice the group of isometries
which preserves $\P$ is not trivial and is generated by the transformation $R$. In order to break this symmetry, it
is possible, by a standard way, to decorate prototiles to get a new
finite affine  type tiling which is stable under no
non-trivial isometry (we say in this case that the tiling is {\em aperiodic}).


\section{Background on tiling spaces}

In this section, we recall some basic definitions and properties on
dynamical systems  associated with tilings. We refer to  \cite{G},
\cite{KP} and \cite{Ro}   for the proofs. We give then  a description of the dynamical system associated to the hyperbolic Penrose's tiling. 

\subsection{Action on tilings space}

\noindent First, note that the group $G$  acts transitively, freely (without a fixed point) and preserving the
orientation of the surface $\H$, thus $G$ is a Lie group homeomorphic to $\H$. The metric on $\H$
gives a left multiplicative invariant metric on $G$. We fix the point $O$ in $\H$ with affix $i$ that we call {\it origin}.

\noindent For a tiling $T$ of $G$ finite type and an isometry $p$ in $G$, the image of $T$ by
$p$ is again a tiling of $\H$ of $G$ finite type. We denote by $G.T$ the set of tilings which
are image of $T$ by isometries in $G$. The group $G$ acts on this set by the left action:

$$\begin{array}{ccc}
  G \times G.T & \longrightarrow & G.T \\
  (p, T') & \longmapsto & p.T' = p(T').
\end{array}
$$

\medskip

We equip $G.T$ with a metrizable topology, {so that the action becomes continuous}.  A base of
neighborhoods is defined as follows: two tilings are close one to  the other if they agree, on a big
ball of $\H$ centered at the origin,  up to an isometry in $G$ close to the identity. This topology
can be generated by the metric $\delta$ on $G.T$ defined by (see \cite{G}):

\noindent For $T$ and $T'$ be two tilings of $G.T$,  let
 $$A = \{ \epsilon \in (0, \frac{1}{\sqrt{2}}] \  \vert\  \exists\  g \in B_{\epsilon}(Id) \subset G \ {\rm s.t.} \  g.T\cap
B_{1/\epsilon} = T' \cap B_{1/\epsilon}\} $$

\noindent where $B_{1/\epsilon}$ is the set of points $x\in
\H{\cong G}$ such that $d(x, O) <
1/{\epsilon}$.

 \noindent   We define: $$ \delta(T,T') = {\rm{\hbox{inf }}}
A {\rm \hbox{     if  }} A \not= \emptyset$$
   $$\delta(T, T') = \frac{1}{\sqrt{2}} {\rm \hbox{     }else}.$$

\medskip
\noindent The {\it continuous hull} of the tiling $T$, is the metric completion of $G.T$ for the
metric $\delta$. We denote it by $X^G_T$. Actually this space is a set of tilings of $\H$ of
$G$-finite type. A {\it patch} of a tiling $T$ is a finite set of tiles of $T$. It is
straightforward to show that patches of tilings in \hu \  are copies of  patches of $T$. The set
\hu \ is then a compact metric set and the action of $G$ {on $G\cdot T$}
can be extended to a continuous left
action on this space. The dynamical system $(X^G_T, G)$ has a dense orbit:  the orbit of $T$.\\
Some combinatorial  properties can be interpreted in a dynamical way like, for instance, the following. 


\begin{defn}
A tiling $T$ satisfies the repetitivity condition if for each patch $P$, there exists a real $R(P)$
such that every ball of $\H$ with radius $R(P)$ intersected with the
tiling $T$ contains a translated by an element $G$ of
the patch $P$.
\end{defn}

\noindent This definition can be interpreted from a dynamical point of view (for a proof see for instance
\cite{GH}).

\begin{prop}[Gottschalk]
The dynamical system $(X^G_T,G)$ is minimal (any orbit is dense) if and only if the tiling
$T$ satisfies the repetitivity condition.
\end{prop}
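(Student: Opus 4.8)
The plan is to prove the Gottschalk-type equivalence between minimality of $(X^G_T, G)$ and the repetitivity condition on $T$, proceeding by proving both implications directly from the definitions of the metric $\delta$ and the continuous hull.

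First I would establish the forward direction: assume $(X^G_T,G)$ is minimal and deduce repetitivity. Fix a patch $P$ of $T$. Because $P$ is a patch of $T$, there is an element $g_0 \in G$ and an $\epsilon_0 > 0$ such that every tiling $T'$ with $\delta(g_0.T, T') < \epsilon_0$ contains a $G$-translate of $P$ in a neighborhood of the origin — this is just unwinding the definition of $\delta$ (agreement on $B_{1/\epsilon}$ up to a small isometry forces the patch to appear). So the open set $U = \{ T' \in X^G_T \mid T' \text{ contains a copy of } P \text{ near } O\}$ is nonempty. By minimality, the orbit of every $T'' \in X^G_T$ is dense, so $G.T'' \cap U \neq \emptyset$; equivalently $\bigcup_{g\in G} g^{-1}.U = X^G_T$. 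Since $X^G_T$ is compact and each $g^{-1}.U$ is open, finitely many suffice: $X^G_T = \bigcup_{j=1}^{m} g_j^{-1}.U$. Now pick $R(P)$ large enough that a ball of radius $R(P)$ in $\H$ contains all the translates $g_j^{-1}(\text{small ball around }O)$ needed; then for any point $x \in \H$, translating $T$ so that $x$ goes near $O$ lands in some $g_j^{-1}.U$, which forces a copy of $P$ inside the ball of radius $R(P)$ around $x$. The only care needed is to track how the "size of the ball" transforms under the $g_j$, but since these are finitely many fixed isometries this is a bounded distortion and $R(P)$ can be chosen uniformly.

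Next I would do the converse: assume $T$ is repetitive and show $(X^G_T, G)$ is minimal, i.e. every $T' \in X^G_T$ has dense orbit. It suffices to show $T$ itself lies in the closure of $G.T'$ for every $T'$, since then by symmetry (any $T''$ is in the hull, hence approximated by translates of $T'$, hence...) — actually the cleanest route is: show that for every $T', T'' \in X^G_T$ and every $\epsilon > 0$ there is $g\in G$ with $\delta(g.T', T'') < \epsilon$. Given $\epsilon$, the tiling $T''$ agrees near $O$ on $B_{1/\epsilon}$ with some patch $P$ of $T$ (up to a small isometry), and this $P$ is a patch of $T'$ too. By repetitivity of $T$, the patch $P$ occurs within distance $R(P)$ of every point; one shows the corresponding statement holds for $T'$ as well (any patch of $T'$ is a patch of $T$, and repetitivity is a local-combinatorial property transferred through the hull — here one invokes that patches of tilings in $X^G_T$ are copies of patches of $T$, stated in the excerpt). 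Hence there is $g\in G$ moving an occurrence of $P$ in $T'$ to near the origin, making $g.T'$ agree with $T''$ on $B_{1/\epsilon}$ up to a small isometry, i.e. $\delta(g.T', T'') < \epsilon$.

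The main obstacle I anticipate is the careful bookkeeping in passing between "the patch $P$ appears near the origin" and "the patch $P$ appears within a ball of radius $R(P)$ around an arbitrary point $x$," which requires conjugating by the isometry sending $x$ to $O$ and checking that the hyperbolic metric distortions stay controlled — and, in the converse, verifying that repetitivity of $T$ genuinely transfers to repetitivity-type recurrence for an arbitrary $T'$ in the hull, which uses that $T$ and $T'$ share the same finite set of patches. These are the points where the standard Euclidean argument must be adapted to the non-unimodular hyperbolic setting, though since $G$ acts by isometries of $\H$ the adaptation is essentially formal; I would cite \cite{GH} for the details rather than grinding through the estimates.
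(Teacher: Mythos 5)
The paper itself gives no proof of this proposition --- it points to Gottschalk--Hedlund \cite{GH} --- so there is nothing in the text to compare your argument against; it has to stand on its own. Your sketch is the standard compactness proof and is correct in outline. The forward direction is sound as written: the set $U$ of hull elements containing a copy of $P$ near the origin is open and nonempty, minimality gives $X^G_T=\bigcup_{g\in G}g^{-1}.U$, compactness of $X^G_T$ (asserted in the paper) extracts a finite subcover $g_1^{-1}.U,\dots,g_m^{-1}.U$, and since $G$ acts by isometries of the hyperbolic plane one may take $R(P)=\max_j d(O,g_j^{-1}(O))$ plus a fixed constant. Your worry about ``metric distortion'' is indeed vacuous: only the hyperbolic metric enters here, not the (non-unimodular) Haar measure, so the Euclidean argument transfers verbatim.

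The one genuinely imprecise point is in the converse, at the assertion ``this $P$ is a patch of $T'$ too.'' That is exactly the statement for which repetitivity is needed, and the justification you offer in parentheses --- that any patch of a hull element is a copy of a patch of $T$ --- is the \emph{opposite} inclusion and does not yield it: a priori a patch of $T$ could fail to occur in some limit tiling $T'\in X^G_T$, and this is precisely what happens when $T$ is not repetitive. The correct argument is a limit argument: write $T'=\lim_n g_n.T$; by repetitivity each $g_n.T$ contains a copy of $P$ inside the fixed ball $B_{R(P)+c}(O)$, where $c$ bounds the diameter of $P$; since $g_n.T$ agrees with $T'$ on ever larger balls up to ever smaller isometries, for $n$ large that copy transports to a copy of $P$ in $T'$ within $B_{R(P)+c+1}(O)$. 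This shows every patch of $T$ occurs in every $T'$ with essentially the same recurrence constant, after which your conclusion goes through: translate the occurrence of $P$ in $T'$ so that it matches the occurrence in $T''$ around the origin, giving $\delta(g.T',T'')\leq\epsilon$. (Note also that it suffices to carry this out for $T''$ ranging over the dense orbit $G.T$, which lightens the bookkeeping.) With that step repaired the proof is complete.
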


We call a tiling {\it aperiodic} if the action of $G$ on \hu \ is free: for all $p \neq Id$ of $G$ and
all tilings  $T'$ of \hu \  we have $p.T' \neq T' $. 

\noindent  As we have seen in the former section  the hyperbolic
Penrose's tiling is not aperiodic, however, using this example, we shall construct in section \ref{exemple} uncountably many examples of repetitive and aperiodic affine finite type tilings.

When the tiling $T$ is aperiodic and repetitive, the hull \hu \ has also a geometric structure  of a specific lamination called a $G$-{\em solenoid} (see \cite{G}).  Locally at any point $x$, there exists a {\em vertical germ} which is a Cantor set included in \hu, transverse to the local  $G$-action and which is  defined independently  of the neighborhood of the point $x$. This implies that \hu \ is locally  homeomorphic to the  Cartesian product of a Cantor set with an open subset  (called a {\em slice}) of the Lie group $G$.  The connected component of the slices that intersect is called a {\em leaf} and has a manifold structure. Globally, \hu \ is a disjoint union of uncountably many leaves, and it turns out that  each leaf is a $G$-orbit. Since the action is free,   each leaf is homeomorphic to $\H$.

In the aperiodic case, the $G$-action is {\it expansive}: There exists a positive real $\epsilon$ such that for every
points $T_1$ and $T_2$ in the same vertical in \hu, if $\delta(T_1.g,T_2.g) < \epsilon$ for every
$g \in G$, then $T_1=T_2$.\\
Furthermore this action has locally constant return times: if an orbit (or a leaf) intersects two
verticals $V$ and $V'$ at points $v$ and $v.g$ where $g \in G$, then for any point $w$ of $V$
close enough to $v$, $w.g$ belongs to $V'$.

\subsection{Structure of the hull of the Penrose Hyperbolic tilings}\label{subsec-hull}

First recall the notion of suspension action for $X$  a  compact metric space and $f : X \to X$ a homeomorphim.  The group $\Z$ acts diagonally on the product space $X \times \R$  by the following homeomorphism denoted $\cA_{f}$
\begin{eqnarray*}
\cA_{f } : \  X \times \R \to X \times\R \\
 (x, t) \mapsto (f(x), t-1)
\end{eqnarray*}
The quotient space of $(X \times \R) \slash \cA_{f}$, where two points are identified if they belong to the same orbit, is a compact set  for  the product topology and is  called {\em the suspension} of $(X,f)$. The group $\R$ acts also diagonally on $X \times \R$: trivially on X and by translation on $\R$. Since this action commutes with  $\cA_{f}$, this induces a continuous $\R$ action on the suspension space    $(X \times \R) \slash \cA_{f}$ that we call:  the {\em suspension action } of the system $(X,f)$ and we denote it by $\big( (X \times \R) \slash\cA_{f}, \R \big)$.

We recall here, the construction of the  dyadic completion of the integers. On the set of integers $\Z$, we consider the dyadic norm defined by $$ \vert n \vert_{2}= 2^{-\sup\{ p \in \N, \ 2^p\  \textrm{divides} \  \vert n\vert \}} \hspace{1cm} n\in \Z.$$ 
Let $\Omega$ be the completion of the set $\Z$ for the metric given by $\vert .\vert_{2}$. The  set $\Omega$ has a commutative group structure where $\Z$ is a dense subgroup, and $ \Omega$ is a Cantor set. The continuous action given by the map $o: \  x \mapsto x+1$ on $\Omega$ is called {\em adding-machine} or {\em  odometer} and is known to be minimal and equicontinuous. We denote by   $((\Omega \times \R )\slash \cA_{o}, \R)$ the suspension action of this homeomorphism.\\

Recall that a conjugacy map between two dynamical systems is a homeomophism which commutes with the actions. Let $\cN$ be the group of {transformations} $\{ z \mapsto z+t, \ t\in \R\}$ isomorphic to $\R$.

\begin{prop}\label{premsuspension} Let $X^\cN_{\P}$ be the closure
  (for the tiling topology) of the orbit $\cN. \P\subset  X^G_{\P}$. Then the   dynamical system $(X^\cN_{\P}, \cN)$  is conjugate to the suspension action of the odometer $((\Omega \times \R) \slash\cA_{o}, \R)$.
\end{prop}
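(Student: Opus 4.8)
The plan is to describe explicitly which tilings occur in $X^\cN_{\P}$, then to exhibit a compact global transversal for the $\cN$-action whose return time is constantly $1$ and whose first return map is the dyadic odometer, and finally to invoke the standard identification of such a flow with the suspension of its return map.

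\emph{Structure of the tilings in $X^\cN_{\P}$.} Write $\P+t$ for the image of $\P$ under $z\mapsto z+t$, so that $\cN\cdot\P=\{\P+t:t\in\R\}$. The tiling $\P$ is a stack of \emph{layers}, the $k$-th being $L_{k}=\{R^{k}S^{n}P:n\in\Z\}$; it is invariant under $z\mapsto z+2^{k}$ and meets the next layer $L_{k+1}$ along a curvilinear line through those points of the horocycle $\{\Im z=2^{k+1}\}$ whose real parts form $2^{k}\Z$. It is here crucial that the extra vertex $A_{2}$ is the midpoint of the lower edge of $P$ and that $R:z\mapsto 2z$ dilates by $2$: this is exactly what makes $L_{k+1}$ contribute, on that line, the $2^{k}$-spaced vertices already contributed by $L_{k}$, matching them edge to edge. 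Consequently each $\P+t$ is the stack of the translated layers $L_{k}+t$. Now let $T'\in X^\cN_{\P}$; since $X^\cN_{\P}$ is the $\delta$-closure of $\cN\cdot\P$, the tiling $T'$ agrees with suitable $\P+t$ on balls around the origin of radius tending to infinity and up to isometries tending to the identity, and letting the radius grow forces the restriction of $T'$ to the $k$-th layer to be an exact translate $L_{k}+c_{k}(T')$ with $c_{k}(T')\in\R/2^{k}\Z$ well defined; the edge-to-edge matching along the interface between layers $k$ and $k+1$ says precisely that $c_{k}(T')$ is the reduction of $c_{k+1}(T')$ modulo $2^{k}$. Conversely, given a family $(c_{k})_{k}$ satisfying these relations, pick lifts $\tilde c_{K}\in\R$ of the $c_{K}$; then $\P+\tilde c_{K}$ has $k$-th layer $L_{k}+(\tilde c_{K}\bmod 2^{k})=L_{k}+c_{k}$ for every $k\le K$, so the $\P+\tilde c_{K}$ form a $\delta$-Cauchy sequence whose limit $T'\in X^\cN_{\P}$ has $c_{k}(T')=c_{k}$ for all $k$. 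Each $c_{k}$ is $\delta$-continuous, hence $T'\mapsto(c_{k}(T'))_{k}$ is a continuous bijection from the compact space $X^\cN_{\P}$ onto $\varprojlim_{k}(\R/2^{k}\Z)$, therefore a homeomorphism, and it intertwines the $\cN$-action with $(c_{k})_{k}\mapsto(c_{k}+t)_{k}$; in particular the $\cN$-action is free, because a nonzero real $t$ lies in no $2^{k}\Z$ for $k$ large.

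\emph{The transversal and the odometer.} Let $\Xi=\{T'\in X^\cN_{\P}:c_{0}(T')=0\}$; since $c_{0}$ is continuous this is a compact set, and for $T'\in\Xi$ the refinement relations force $c_{k}(T')\in\Z/2^{k}\Z$ for all $k$, so $T'\mapsto(c_{k}(T'))_{k\ge 0}$ identifies $\Xi$ homeomorphically with $\varprojlim_{k}(\Z/2^{k}\Z)$, that is, with the $2$-adic completion $\Omega$ of $\Z$. The $\cN$-orbit of a point with $c_{0}=\theta$ meets $\Xi$ exactly at the real parameters $t$ with $t+\theta\in\Z$, so $\Xi$ is a global transversal, and by freeness every return time equals $1$; the first return map $T'\mapsto T'+1$ acts by $(c_{k})_{k}\mapsto(c_{k}+1\bmod 2^{k})_{k}$, i.e. by the odometer $o$ on $\Omega$. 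By the standard correspondence between free continuous $\R$-actions admitting a compact global transversal of constant return time $1$ and suspensions, $(X^\cN_{\P},\cN)$ is conjugate to $((\Omega\times\R)\slash\cA_{o},\R)$.

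\emph{Main obstacle.} Everything past the structural description is formal, so the real content is the first step. Its two delicate points are: (i) upgrading the fact that $T'\in X^\cN_{\P}$ is approximately periodic on large balls to the statement that each of its layers is \emph{exactly} a translate of $L_{k}$, so that $X^\cN_{\P}$ contains nothing beyond these stacks; and (ii) the combinatorial identification, via the midpoint vertex $A_{2}$ and the dilation $R$, of the interface matching with the $2$-adic refinement $c_{k}\equiv c_{k+1}\pmod{2^{k}}$, which is what forces the odometer to be dyadic rather than $n$-adic for some other $n$. As an alternative to the last two steps one may instead recognise $\varprojlim_{k}(\R/2^{k}\Z)$ as the dyadic solenoid $(\R\times\Omega)\slash\Z$ and check that the $\cN$-action is its translation flow, which amounts to the same conclusion.
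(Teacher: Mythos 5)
Your proof is correct, but it takes a genuinely different route from the paper's. The paper argues softly on dense orbits: it defines $\phi(\P+t)=[0,t]$ on $\cN\cdot\P$, asserts that $\phi$ is continuous from the tiling topology to the suspension topology, extends it to the closure, and builds the inverse by extending $\psi(n,t)=\P+n+t$ from $\Z\times\R$ to $\Omega\times\R$ and factoring through $\cA_o$; the mutual inverses are then checked on dense sets. All the combinatorial content is concentrated in the continuity assertions, which the paper declares ``straightforward to check.'' You instead make that content explicit: you classify the points of $X^\cN_\P$ as stacks of layers $L_k+c_k$ subject to $c_k\equiv c_{k+1}\pmod{2^k}$, identify the hull with the dyadic solenoid $\varprojlim_k\R/2^k\Z$, and read off the suspension structure from the concrete transversal $\Xi=\{c_0=0\}\cong\Omega$ with constant return time $1$ and odometer return map. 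Your approach is longer but buys more: it isolates exactly why the odometer is dyadic (the factor $2$ in $R$ together with the midpoint vertex $A_2$), it yields freeness of the $\cN$-action and an explicit canonical transversal as by-products, and it supplies the proof of the continuity the paper leaves implicit. The one step you should tighten in a full write-up is the passage from approximate agreement to exact layer structure: the isometries $g\in B_\epsilon(Id)\subset G$ appearing in the definition of $\delta$ may include small dilations, so you must rule out that a limit of translates of $\P$ acquires tiles of the form $a\,2^kP+c$ with $a\neq 1$. This follows from the affine rigidity of $P$ (a fixed tile of the limit determines $a\,2^k$ uniquely, and $a\to 1$ then forces $a=1$ eventually), and is the same unproved assertion the paper itself relies on when it states that patches of tilings in the hull are copies of patches of $T$; but since you flag it as the main obstacle, it deserves the two lines it takes to close it.
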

\begin{proof} Let $\phi : \cN. \P \to (\Omega \times \R)/\cA_{o}$   be
the map defined by $\phi (\P + t) = [ 0 , t]$ where  $[0 , t]$ is the
$\cA_{o}$-class 
of $(0, t) \in\Omega \times \R$. Since the tiling  is  invariant under
no   translations, the  application $\phi$ is well defined. It is
straightforward to check that $\phi$ is continuous for the  tiling
topology and for {the topology on $(\Omega \times \R)/\cA_{o}$  arising
from} the dyadic topology on $\Omega$.  So the map $\phi$
extends by continuity to $X^\cN_{\P}$. Let us check that $\phi$ is a
homeomorphism by constructing its inverse.
Let $\psi : \Z \times \R \to X^\cN_{\P}$ defined by $\psi (n, t) =
\P+n+t$. This application  is continuous for the dyadic topology,
 so it extends by continuity to $\Omega \times \R$. Notice
 $\psi(n,t)$ 
is constant along the orbits of the $\cA_{o}$ action on $\Z\times \R$
which is dense in $\Omega \times \R$.
 Thus $\psi$ is constant along the $\cA_{o}$-orbits in $\Omega \times
 \R$  and $\psi$ factorizes through  a map 
$\overline{\psi}$ from the suspension $(\Omega\times\R)/\cA_{o}$ to
$X^\cN_{\P}$. 
It is plain to check that $\overline{\psi}\circ{\phi} = Id$ on the
dense set $\cN. \P$ and that  
${\phi}\circ\overline{\psi} = Id$ on  the dense set $\pi(\Z \times
\R)$ where 
$\pi : \Omega\times \R \to  (\Omega\times \R) / \cA_{o}$  denotes the
canonical projection. 
Hence ${\phi}$ is an homeomorphism from $X^\cN_{\P}$ onto
$(\Omega \times \R)/ \cA_{o}$. 
It is obvious that  ${\phi}$ commutes with the $\R$-actions. 
\end{proof}

\section{Examples}\label{exemple}

We construct in this section a family of tilings of $\H$ of finite affine type, indexed by sequences on a finite alphabet. For uncountably many of them, the  tilings will be aperiodic and repetitive, the action on the associated hull will be free and minimal. A description of these actions in terms of  double-suspension is given.

\subsection{ Construction of the examples}  To construct such tilings
we will  use the hyperbolic Penrose's tiling described in section
\ref{hPT}, so we will keep the  notations of this section. Recall that
its stabilizer group under the action of $G$, is the group $\langle \
R\  \rangle$ generated by the  affine transformations  $R$. The main idea is to "decorate" this tiling in order to break its symmetry, the decoration will be coded by a sequence on a finite alphabet. By a decoration, we mean that we will substitute to each tile $t$ the  same polygon $t$ equipped with a color. We take the convention that two colored polygons  are the same if and only if the polygons are the same up to an affine map and they share the same color. By substituting each tile by a colored tile, we obtain a new tiling of finite affine type with a bigger number of prototiles. 

\noindent Notice that the coloration is not canonical. It also possible to do the same by substituting to a tile $t$, an unique finite family of  convex tiles $\{t_{i}\}_{i}$, like triangles,  such that the union of the $t_{i}$ is $t$ and the tiles $t_{i}$ overlaps only on their borders.  We choose the coloration only for presentation reasons.

Let $r$ be an integer bigger than $1$. We associate to each element of $\{1, \ldots, r\}$ an unique color. Let $P$ be the polygon defined in section \ref{hPT} to construct the Penrose's tiling. For an element $i$ of $\{1, \ldots, r\}$, we denote by $P_{i}$ the prototile $P$ colored in the color  $i$. To a sequence  $w= (w_k)_{k} \in $\{1, \ldots, r\}$^{\Z}$, we associate the $G$-finite-type tiling $\cP(w) $  built with the prototiles $P_{i}$ for $i$ in $\{1, \ldots, r\}$  and defined by:
$${\mathcal P}(w) = \{R^q \circ S^n ({P}_{w_{-q}}), \ n,q \in \Z\}.$$

\noindent Notice that the  stabilizer of this tiling is a subgroup of $\langle \ R\  \rangle$.

The set of sequences on $\{1, \ldots, r\}$ is the product space $\{1, \ldots, r\}^\Z$
which is   a Cantor set  for the product topology. There exists a natural homeomorphism on it called the $shift$. To a sequence
$(w_n)_{n\in \Z}$ the shift $\sigma$ associates the sequence
$(w_{n+1})_{n\in \Z}$.
Let $Z_{w}$ denote the closure of the orbit of $w$ by the action of the shift $\sigma$: $Z_{w}
=\overline{\{\sigma^n(w), \ n\in \Z\}}$. The set $Z_{w}$ is a compact metric space stable under the
action of $\sigma$.
\begin{remark}\label{rem-cont-colour}
The map $$Z_\omega\to X^G_{\cP(w)};\,\omega'\mapsto \cP(w')$$ is
continuous.
\end{remark}
\noindent Since $R. \cP(w)$ denotes the tiling image of $\cP(w)$ by $R$, we get the relation  
\begin{equation}\label{relation}
R.{\mathcal P}(w) = {\mathcal P}({\sigma(w)}).
\end{equation}
Thanks to  this, we obtain the following property:
\begin{lemma}\
\begin{itemize}
\item The sequence  $w$ is aperiodic for the shift-action, if and only
  if  $\cP(w)$ is  stable under no non-trivial affine map.
\item The dynamical system $(Z_{w}, \sigma)$ is minimal, if and only
  if  $(X^G_{\cP(w)}, G)$ is minimal.
\end{itemize}
\end{lemma}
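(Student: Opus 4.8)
The first assertion is essentially formal. As noted just above, $\operatorname{Stab}_G(\cP(w))\subseteq\langle R\rangle$, so it is enough to decide, for $k\neq 0$, whether $R^{k}$ fixes $\cP(w)$. Iterating \eqref{relation} gives $R^{k}.\cP(w)=\cP(\sigma^{k}(w))$, so $R^{k}\in\operatorname{Stab}_G(\cP(w))$ exactly when $\cP(\sigma^{k}(w))=\cP(w)$. Now the colouring of $\cP(w)$ paints every tile lying in the horizontal band $2^{q}\le\Im z\le 2^{q+1}$ with the colour $w_{-q}$, and this band --- hence the integer $q$ --- is read off from the tile; thus $w'\mapsto\cP(w')$ is injective, and $\cP(\sigma^{k}(w))=\cP(w)$ forces $w_{-q+k}=w_{-q}$ for every $q$, i.e.\ $\sigma^{k}(w)=w$. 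Hence $\operatorname{Stab}_G(\cP(w))\neq\{Id\}$ precisely when $w$ admits a nonzero shift period, which proves the first item.

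For the second item I would first invoke Gottschalk's Proposition to replace the minimality of $(X^G_{\cP(w)},G)$ by the repetitivity of $\cP(w)$, and recall the classical fact that $(Z_w,\sigma)$ is minimal if and only if $w$ is uniformly recurrent, i.e.\ every finite subword of $w$ reoccurs syndetically. The crux is then a dictionary between patches of $\cP(w)$ and windows of $w$. A patch $P$ of $\cP(w)$ meeting the bands $q\in[a,b]$ consists of an uncoloured patch $P^{\circ}$ of $\cP$ together with the finite word $u=w_{-b}w_{-b+1}\cdots w_{-a}$ recording the colours of those bands. Suppose $g\in G$ and $g.P$ is again a subpatch of $\cP(w)$. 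On the one hand $g.P^{\circ}\subset\cP$, which concerns the geometry of $\cP$ alone; comparing imaginary parts, this forces the dilation factor of $g$ to be a power of $2$, say $2^{k}$, and $g$ to carry each band $q$ onto the band $q+k$ ($k$ being the vertical scale of the copy). On the other hand, comparing colours then shows $g.P\subset\cP(w)$ forces $w_{-q-k}=w_{-q}$ for $q\in[a,b]$, i.e.\ $u$ must reoccur in $w$ at the location prescribed by $-k$. Conversely, if $u$ reoccurs at such a location and a geometric copy $g.P^{\circ}\subset\cP$ of vertical scale $k$ can be fitted inside a prescribed ball, then the coloured copy $g.P$ lies in $\cP(w)$ for free.

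Granting this dictionary, the implication from repetitivity of $\cP(w)$ to uniform recurrence of $w$ is easy: given a window $u$ of $w$ and its patch $P$, apply repetitivity to the balls $B(2^{m}\imath,\rho)$ centred on the imaginary axis of $\H$, $m\in\Z$; a copy of $P$ inside such a ball has its vertical scale $k$ confined to an interval centred near $m$ whose length grows linearly in $\rho$, so as $m$ ranges over $\Z$ the word $u$ is forced to reoccur in $w$ with gaps bounded in terms of $\rho$ --- that is, $w$ is uniformly recurrent.

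For the reverse implication I would construct, inside any ball of sufficiently large radius, a geometric copy of $P^{\circ}$ of any prescribed scale $k$ by applying $R^{k}$ to $P^{\circ}$ and then a horizontal translation $S^{m}$ with $m$ a multiple of $2^{b+k}$ --- legitimate because $R^{k}.P^{\circ}$ is supported in the bands $\le b+k$ and such a translation carries that portion of $\cP$ to itself; one then chooses $k$ so that the bands $[a+k,b+k]$ sit well inside the ball with more than one horizontal period of room, and uses uniform recurrence of $w$ to arrange $w_{-q-k}=w_{-q}$ on $[a,b]$, so that a copy $g.P\subset\cP(w)$ appears in that ball. The step I expect to need the most care is precisely this last geometric bookkeeping: for a hyperbolic ball of radius $\rho$ one has to control which bands of rows it meets and, since such balls are lens-shaped and the horizontal period $2^{b+k}$ of the bands $[a+k,b+k]$ grows with the scale, how much horizontal room remains there --- so that the radius has to be taken large in a way that depends on $P$. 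This is elementary, but it is where the actual work lies.
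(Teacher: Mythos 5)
Your proof is correct and follows the same route as the paper's: the first item from the relation $R\cdot\cP(w)=\cP(\sigma(w))$ together with the fact that the stabilizer of $\cP(w)$ is contained in $\langle R\rangle$, and the second from Gottschalk's proposition plus the equivalence of uniform recurrence of $w$ with repetitivity of $\cP(w)$. The paper asserts this last equivalence in one sentence without justification, so your band-by-band dictionary and the geometric bookkeeping are exactly the details it leaves to the reader.
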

 
\begin{proof} The first point comes from the relation \ref{relation}
  and from the fact that the stabilizer of $\cP(w)$ is a subgroup  of
  $\langle\  R\  \rangle$. The  last point comes from the
  characterization of minimal sequences: $(Z_{w}, \sigma)$ is minimal
  if and only if each words in $w$ appears infinitely many times  with
  uniformly bounded gap \cite{Qu}. This condition is equivalent
    to the repetitivity of   $\cP(w)$. \end{proof}

Recall that we have defined the group $\cN =\{z \mapsto z +t, \ t \in
\R\}$ and that $X^\cN_{\P}$ stands for  the closure (for the tiling
topology) of the $\cN$-orbit $\cN. \P$  in $X^G_{\cP}$ of the uncolored tiling
$\P$. Notice that the continuous action of $R$ on $X^G_{\cP}$
preserves the orbit $\cN. \P$  so  we get  an homeomorphism of $
X^\cN_{\P}$ that we denote also by $R$. We consider on the space
$X^\cN_{\P} \times Z_w \times \R^*_+$ equipped with the product
topology,  the homeomorphism $ \cR$ defined by $\cR( \cT, w', t) =
(R.\cT, \sigma(w'), t\slash 2)$. The quotient space $(X^\cN_{\P} \times Z_w \times \R^*_+)\slash \cR$, where the points in the same $\cR$ orbit are identified, is a compact space.

The affine group $G$ also  acts on the left on $X^\cN_{\P} \times Z_w \times \R^*_+$; where the action of an element $ g : z\mapsto az+b$ is given by the homeomorphism
$$( \cT, w', t) \mapsto (\cT+\frac{b}{at}, w', at) = g.(\cT,w',t). $$

It is straightforward to check that   the application $\cR$ commutes with this action, so this defines a $G$-continuous action on the quotient space $(X^\cN_{\P} \times Z_w \times \R^*_+)/ \cR$.

\begin{prop}\label{caracterisation} Let $w$ be an element of $\{1, \ldots, r\}^{\Z}$.
Then the map 
\begin{eqnarray*}
\Psi : G.\cP(w) & \to &  (X^\cN_{\P} \times Z_{w} \times \R^*_{+}) \slash \cR \\
g.\cP(w) & \mapsto & [g.(\P, w, 1)]
\end{eqnarray*}
where $[x]$ denotes the $\cR$-class of $x$, extends to  a conjugacy map between  $(X^G_{\P(w)}, G)$  and $\big( (X^\cN_{\P} \times Z_{w} \times \R^*_{+}) \slash \cR ,\  G \big).$
\end{prop}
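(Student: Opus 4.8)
The plan is to show that $\Psi$ is well-defined on the orbit $G\cdot\cP(w)$, that it is equivariant and uniformly continuous, and then to construct an explicit inverse on a dense set; density and compactness will then upgrade the bijection-on-a-dense-set to a homeomorphism of the completions. First I would check well-definedness: if $g_1.\cP(w)=g_2.\cP(w)$ then $g_1^{-1}g_2$ stabilizes $\cP(w)$, hence (by the remark after the construction) lies in $\langle R\rangle$, say $g_1^{-1}g_2=R^m$; using relation \eqref{relation}, $R^m.\cP(w)=\cP(w)$ forces $\sigma^m(w)=w$, and one computes directly that $R^m.(\P,w,1)=(R^m.\P,\sigma^m(w),2^{-m})=(R^m.\P,w,2^{-m})$ is $\cR$-equivalent to $(\P,w,1)$ (apply $\cR^m$), so $[g_1.(\P,w,1)]=[g_2.(\P,w,1)]$. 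Equivariance is immediate from the definition of the $G$-action on the quotient: for $h\in G$, $\Psi(h.g.\cP(w))=[hg.(\P,w,1)]=h.[g.(\P,w,1)]=h.\Psi(g.\cP(w))$.

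Next I would establish uniform continuity of $\Psi$ with respect to the tiling metric $\delta$ on the source and the natural metric on the quotient. The key geometric input is relation \eqref{relation}: $R$ acts on $\cP(w)$ by shifting the color sequence, so a tiling $g.\cP(w)$, near the origin $O$, is determined by the colors $w_{-q},\ldots,w_q$ for $q$ large together with the underlying uncolored geometry, which is governed by the position of $g$ modulo the discrete group generated by $S$ after suitable rescaling by the $R$-action. Concretely, writing $g:z\mapsto az+b$, the scale $a$ records which ``level'' of the $R$-suspension we sit at, $b/a$ modulo the $S$-translations records the $\cN$-coordinate in $X^\cN_{\P}$, and the residual discrete data records the point of $Z_w$; two elements $g,g'$ with $\delta(g.\cP(w),g'.\cP(w))$ small must agree on a large ball, forcing all three coordinates of $\Psi(g.\cP(w))$ and $\Psi(g'.\cP(w))$ to be close in the quotient. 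Hence $\Psi$ extends continuously to $X^G_{\cP(w)}$, and the extension is still $G$-equivariant by density of $G\cdot\cP(w)$ and continuity of both $G$-actions.

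For the inverse, I would define $\Phi:X^\cN_{\P}\times Z_w\times\R^*_+\to X^G_{\cP(w)}$ on the dense subset $\cN.\P\times\{\sigma^n(w):n\in\Z\}\times\R^*_+$ by $\Phi(\P+s,\sigma^n(w),t)=g.\cP(w)$ where $g$ is the affine map encoding the data $(s,n,t)$ — explicitly $g:z\mapsto tz+ts+\text{(shift recorded by $n$)}$, chosen so that $\Psi\circ\Phi=\mathrm{id}$ there. One checks $\Phi$ is constant on $\cR$-orbits: $\cR(\P+s,\sigma^n(w),t)=(R.\P+\tfrac{s}{2},\sigma^{n+1}(w),\tfrac t2)$ and, because $R.\cP(w)=\cP(\sigma(w))$ and $R.\P$ differs from $\P$ only by the uncolored $R$-symmetry which is absorbed in the affine reparametrization, the corresponding element $g$ lands on the same tiling. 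Using Remark \ref{rem-cont-colour} (continuity of $\omega'\mapsto\cP(w')$) together with continuity of the $G$-action, $\Phi$ is continuous on the dense set, hence extends to the compact product and factors through the quotient, giving $\overline\Phi:(X^\cN_{\P}\times Z_w\times\R^*_+)/\cR\to X^G_{\cP(w)}$. The identities $\Psi\circ\overline\Phi=\mathrm{id}$ and $\overline\Phi\circ\Psi=\mathrm{id}$ hold on dense sets (the orbit of $\P$ and $G\cdot\cP(w)$ respectively), hence everywhere by continuity and compactness, so $\Psi$ is a homeomorphism; equivariance was already noted.

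The main obstacle I anticipate is the continuity estimate in the second paragraph: one must show that the decomposition of a tiling $g.\cP(w)$ near $O$ into the triple (uncolored $\cN$-translate, shift-orbit point of $w$, scale) is a uniformly continuous bijection, which requires the locally-constant-return-time and expansiveness features of the $G$-action recalled earlier, plus a careful bookkeeping of how the $R$-action interleaves the scaling parameter $t$ with the shift on $Z_w$ — equivalently, verifying that the ``double suspension'' $\cR$ correctly identifies precisely the pairs of triples that give the same tiling and no others. Once this matching is pinned down, the rest is the standard density-plus-compactness argument for extending a conjugacy from dense orbits to hulls.
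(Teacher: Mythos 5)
Your proposal follows essentially the same route as the paper: define $\Psi$ on the dense orbit $G\cdot\cP(w)$ (well-definedness via the stabilizer being contained in $\langle R\rangle$ together with relation (\ref{relation})), build the inverse $\Phi(\P+\tau,w',t)=R_t.(\cP(w')+\tau)$ on a dense set using Remark \ref{rem-cont-colour}, check that $\Phi\circ\cR=\Phi$ so it descends to the quotient, and conclude by the density-plus-continuity argument. One small slip worth fixing: with the $G$-action as defined, $R^m.(\P,w,1)=(\P,w,2^m)$ rather than $(R^m.\P,\sigma^m(w),2^{-m})$; the desired $\cR$-equivalence with $(\P,w,1)$ still follows by applying $\cR^m$ and using $R.\P=\P$ and $\sigma^m(w)=w$.
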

\begin{proof} Let $\Phi$ be the transformation  $\cN. \P  \times Z_{w}\times
\R^*_{+} \to X^G_{\P(w)}$ defined by $$\Phi(\cP+\tau, w', t) =
R_{t}.(\cP(w')+\tau)$$ where $R_t$ denotes the map $z \mapsto tz$.
According to remark \ref{rem-cont-colour}, the application $\Phi$ is continuous for the tiling topology on
$\cN. \P$, so it extends by continuity to a continuous map from
$X^\cN_{\P} \times Z_{w} \times \R^*_{+} $ to $X^G_{\P(w)}$.  Thanks
to relation (\ref{relation}), we get  
$\Phi \circ \cR = \Phi $ on the dense subset  $\cN. \P\times Z_w \times \R^*_+$. 
Therefore the map $\Phi$ factorizes throught a continuous map
$\overline{\Phi} :  (X^\cN_{\P} \times Z_w \times \R^*_+)/ \cR \to
X^G_{\P(w)}$. Since the stabilizer of the tiling $\P(w)$ is a subgroup
of the one generated by the transformation $R$, and by relation (\ref{relation}),
the map $$\Psi : G\cdot \P(w) \to \big(X^\cN_{\P} \times Z_{w} \times
\R^*_{+}\big) / \cR;\,R_t\cdot  \P(w)+\tau\mapsto [\P+\tau/t,\omega,t]$$ 
is well defined. It is straightforward to check that $\Psi$ is
continuous, so it extends to a continuous map from $X^{G}_{\cP(w)}$ to
$ \big( X^\cN_{\P} \times Z_{w} \times \R^*_{+}\big) / \cR $ that we
denote again $\Psi$. Furthermore we have $\overline{\Phi} \circ \Psi =
Id$ on $G\cdot \P(w)$ and    $\Psi \circ \overline{\Phi} = Id$  on the  dense set $\pi(\cN. \P \times Z_w \times \R^*_+)$ where $\pi$ denotes the canonical projection $ X^\cN_{\P} \times Z_{w} \times \R^*_{+} \to \big( X^\cN_{\P} \times Z_{w} \times \R^*_{+} \big) \slash \cR$. Hence the map $\overline{\Phi}$ is an homeomorphism. The homeomorphism $\Psi$ obviously commutes with the action.  
\end{proof}

Notice that, $X^\cN_{\cP}$ is locally the Cartesian product of  a
Cantor set by an interval of $\R$. For $w \in \{1, \ldots, r\} ^\Z$,
$X^G_{\cP(w)}$ is locally homeomorphic  the product of a Cantor set by
an open subset (a slice) of $\R^*_{+} \times \R$ since the Cartesian product of two Cantor sets is again a Cantor set. The $G$-action maps slices onto slices.

\subsection{Ergodic properties of Penrose's tilings}\label{subsec-ergotic}





For a metric  space $X$ and a continuous action of a group $\Gamma$ on it, a $\Gamma$-{\em invariant measure} is a measure $\mu$ defined on the Borel $\sigma$-algebra of $X$ which is invariant under the action of $\Gamma$ i.e.: For any measurable set $B \subset X$ and any $g \in \Gamma$,    
$\mu (B.g)= \mu(B)$.
For instance, any group  $\Gamma$ acts on itself by right multiplication, there exists (up to a scalar) only  one measure invariant for this action: it is called the {\em Haar} measure.

Any action of an   amenable group $\Gamma$ (like $\Z$, $\R$ and all their extensions)   on a compact  metric space $X$  admits a finite invariant measure and in particular,  any homeomorphism $f$ of $X$ preserves a probability measure.
An {\em ergodic} invariant measure $\mu$ is such that every measurable functions constant along the orbits are $\mu$ almost surely constant. Every invariant measure is the sum of ergodic invariant measures \cite{Qu}. A conjugacy map sends the invariant measure to invariant measure and the ergodic measures to the ergodic measures.

In our case, the  group of affine transformations  $G$, is the extension of two groups isomorphic to $\R$, hence is amenable.  It is well known that the only invariant measures for the suspension action $\big(( X\times\R )\slash \cA_{f}, \R\big)$ are  locally the images through the canonical projection $\pi : X\times \R \to \big(X\times \R\big) \slash \cA_f$ of the measures $\mu \otimes \lambda$ where $\mu$ is a $f$-invariant measure on $X$ and $\lambda$ denotes the Lebesgue measure of $\R$. The proof is actually contained in property \ref{carmesureinv}.

It is  well known also that the map $o:  x \mapsto x+1$ on the dyadic
set of integers $\Omega$, admits only one  invariant  probability
measure:  the Haar probability measure on $\Omega$. Hence the
suspension of this action $\big( (\Omega \times \R) \slash \cA_{o}, \R
\big)$ admits only one invariant probability measure. By proposition
\ref{premsuspension}, $X_{\P}^\cN$ has only one invariant  probability
measure $\nu$.  Notice that the map $R$ preserves $X_{\P}^\cN$, and
since $R\cN R^{-1} = \cN$, the probability  $R_*\nu$ is 
$\cN$-invariant  and hence $R$ preserves
$\nu$. Nevertheless, the local product decomposition of $\nu$ is not
invariant by $R$, because $R$ divides by 2 the length of the intervals
of the $\cN$-orbit. So $R$ has to inflate the Haar measure on $\Omega$
by a factor $2$.

\begin{prop}\label{carmesureinv}
If  $w$ {is  an element in}  $\{1, \ldots, r\}^\Z$, then any 
 finite invariant measure of $\big( (X^\cN_{\P} \times Z_{w} \times \R^*_{+}) \slash \cR ,\  G \big)$ is locally  the image through the projection $X^\cN_{\P} \times Z_{w} \times \R^*_{+} \to  (X^\cN_{\P} \times Z_{w} \times \R^*_{+}) \slash \cR$ of a measure $\nu \otimes \mu \otimes m$ where 
\begin{itemize}
\item $\nu$ is the only invariant probability measure of $(X^\cN_{\P}, \R)$;
\item  $m$ is the Haar measure of $(\R^*_{+}, .)$;
\item  $\mu$ is a finite  invariant measure of $(Z_{w}, \sigma)$.
\end{itemize}
 \end{prop}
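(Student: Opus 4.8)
The plan is to reduce the statement to the known description of invariant measures for suspension actions, applied to the double-suspension structure furnished by Proposition \ref{caracterisation}. First I would observe that by that proposition the system $\big( (X^\cN_{\P} \times Z_{w} \times \R^*_{+}) \slash \cR ,\ G \big)$ is conjugate to $(X^G_{\P(w)}, G)$, and since conjugacy maps invariant measures to invariant measures it suffices to analyse invariant measures on the quotient $Q \defi (X^\cN_{\P} \times Z_{w} \times \R^*_{+}) \slash \cR$ directly. The group $G$ is the semidirect product $\cN \rtimes \R^*_+$, so a $G$-invariant measure is in particular invariant under the normal subgroup $\cN$ and under the one-parameter dilation subgroup. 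I would exploit these two invariances in turn.

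Next I would lift to the cover $Y \defi X^\cN_{\P} \times Z_{w} \times \R^*_{+}$: a finite $G$-invariant measure on $Q$ pulls back, near a fundamental domain for $\cR$, to a locally finite measure on $Y$ that is invariant under the $\cN$-action $(\cT,w',t)\mapsto(\cT+s/t,w',t)$ and scales correctly under dilations. Working first with the $\cN$-invariance at fixed $t$ and fixed $w'$: the $\cN$-action on $X^\cN_{\P}$ is exactly the suspension action of the odometer by Proposition \ref{premsuspension}, whose unique invariant probability measure is $\nu$ (the local product of Haar on $\Omega$ with Lebesgue on $\R$); hence, by disintegrating along the $X^\cN_{\P}$ fibres and using ergodic decomposition, the $X^\cN_{\P}$-marginal of the measure is forced to be $\nu$ locally. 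This is the step where I would need the explicit claim, already asserted in the paragraph preceding the proposition, that invariant measures of a suspension are locally $\mu\otimes\lambda$; I would either cite that or give the short disintegration argument. The $\R^*_+$ coordinate carries no constraint from $\cN$, and $\Z_{w}$ is acted on trivially by $\cN$, so at this stage the measure is locally of the form $\nu\otimes(\text{something on }Z_w\times\R^*_+)$.

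Then I would bring in the remaining invariances. Invariance under the full $\cR$-equivariant structure together with the dilation subgroup of $G$ couples the $\R^*_+$ factor to the others: $\cR$ sends $(\cT,w',t)$ to $(R\cT,\sigma w', t/2)$, and on $X^\cN_{\P}$ the map $R$ multiplies $\nu$ by the factor $2$ (as noted in the discussion before the proposition, because $R$ halves the lengths of the $\cN$-orbit intervals). For the measure on $Q$ to be well defined (i.e. $\cR$-invariant upstairs) and $G$-invariant, the $\R^*_+$-marginal must absorb this factor $2$ under the map $t\mapsto t/2$, which—combined with the dilation action of $\R^*_+\subset G$ on $Y$—pins it down to the Haar measure $m$ of $(\R^*_+,\cdot)$, i.e. $dt/t$, up to scalar. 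Finally the $Z_w$ factor: it is acted on by $\sigma$ through $\cR$ and is untouched by $G$, so the only requirement is $\sigma$-invariance of its marginal $\mu$, and by ergodic decomposition over $(Z_w,\sigma)$ one reduces to $\mu$ an invariant (probability, after normalisation) measure of $(Z_w,\sigma)$ with no further restriction. Assembling the three marginals and checking that $\nu\otimes\mu\otimes m$ is genuinely $\cR$-invariant (using the factor-$2$ scaling of $R_*\nu$ against $t\mapsto t/2$ and the scaling of $m$) and $G$-invariant (using $\cN$-invariance of $\nu$ and dilation-invariance of $m=dt/t$) gives the claimed local form.

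The main obstacle I anticipate is bookkeeping the scaling factors so that everything is consistent: one must check simultaneously that $\cR_*(\nu\otimes\mu\otimes m)=\nu\otimes\mu\otimes m$ and that the resulting measure on $Q$ is $G$-invariant, and the non-unimodularity of $G$ means the naive product of the ``obvious'' invariant measures is \emph{not} $\cR$-invariant unless one uses precisely $m=dt/t$ on $\R^*_+$ and exploits $R_*\nu=2\nu$. Getting the direction of the factor $2$ right, and reconciling the local-product picture across different $\cR$-fundamental domains, is the delicate point; the rest is a routine disintegration-plus-ergodic-decomposition argument built on Propositions \ref{premsuspension} and \ref{caracterisation} and the uniqueness of the odometer's invariant measure.
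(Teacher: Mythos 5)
Your overall architecture is the paper's: reduce to the cover via a fundamental domain for $\cR$, disintegrate coordinate by coordinate, invoke the uniqueness of $\nu$ as the $\cN$-invariant probability on $X^\cN_{\P}$ (Proposition \ref{premsuspension}) and the uniqueness of the multiplicative Haar measure on $\R^*_+$, and extract $\sigma$-invariance of the $Z_w$-marginal from $\cR$-invariance. The only real difference is the order of disintegration (the paper conditions first on $Z_w$, then on $X^\cN_{\P}$, and is then forced to kill a residual density $h(x,w')$ by an ergodicity argument at the end; your order, conditioning on $(w',t)$ first so that the compact-fiber conditionals are exactly $\nu$, absorbs that density into $\mu$ and is arguably slightly cleaner). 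That part is fine.

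There is, however, a concrete error in your scaling bookkeeping: you assert that ``on $X^\cN_{\P}$ the map $R$ multiplies $\nu$ by the factor $2$,'' i.e.\ $R_*\nu=2\nu$, and you build your ``delicate point'' around cancelling this factor against $t\mapsto t/2$. This is false: $\nu$ is a probability measure on a compact space, $R$ is a homeomorphism of $X^\cN_{\P}$ normalizing $\cN$ (since $R\cN R^{-1}=\cN$), so $R_*\nu$ is again an $\cN$-invariant probability and hence $R_*\nu=\nu$ by uniqueness --- exactly as stated in the discussion preceding the proposition. What is scaled by $2$ is only the transverse Haar measure on $\Omega$ in the \emph{local product decomposition} of $\nu$, compensated by the halving of the longitudinal Lebesgue factor; the measure $\nu$ itself is untouched. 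Likewise $(t\mapsto t/2)_*m=m$ exactly, since $m$ is Haar for multiplication, so there is no factor to absorb on that side either. As you have set it up, your final consistency check would give $\cR_*(\nu\otimes\mu\otimes m)=2\,(\nu\otimes\mu\otimes m)$ and the product would appear \emph{not} to be $\cR$-invariant --- a contradiction with what you are trying to prove. The fix is simply to use $R_*\nu=\nu$; then each of the three factors is separately preserved by $\cR$ (given $\sigma_*\mu=\mu$) and the verification closes with no factor of $2$ anywhere.
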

\begin{proof}
It is enough to prove this for an ergodic  finite $G$-invariant
measure $\overline {\theta}$  on the suspension $(X^\cN_{\P}\times
Z_{w} \times \R^*_{+} ) \slash \cR$. Since $\cR$ acts cocompactly on
$X^\cN_{\P}\times Z_{w} \times \R^*_{+}$,  $\overline{\theta}$ defines
a finite measure on a fundamental domain of  $\cR$, and the sum of all
the  images of this measure by iterates of $\cR$ and $\cR^{-1}$ defines a $\sigma$-finite measure $\theta$ on $X^\cN_{\P}\times Z_{w} \times \R^*_{+}$ which is $G$ and $\cR$-invariant. 

\noindent Let $\pi_{2} : X^\cN_{\P} \times Z_{w} \times \R^*_{+}\to
Z_{w} $ be the projection to the second coordinate, then
$\pi_{2*}\theta$ is a shift invariant measure on $Z_{w}$. The measure
$\theta$ can be disintegrated over $\pi_{2*}\theta=\mu$ by a family of
measures   $(\lambda_{w'})_{w'\in Z_w}$ defined for  $ \mu$-almost every $w' \in Z_{w}$ on $X^\cN_{\cP} \times \{w'\} \times \R^*_{+}$   such that 
$$\theta(B \times C  ) = \int_{C} \lambda_{w'}(B) d\mu(w'),$$
for any Borel sets $B \subset X^\cN_{\P}\times  \R^*_{+}$ and  $C \subset Z_{w}$.

\noindent The $G$-invariance of $\theta$ implies that the measures
$\lambda_{w'}$ are  $G$-invariant for almost all $w'$. 
The projection to the first coordinate $\pi_{1} :  X^{\cN}_{\P} \times
\{w'\} \times \R^*_{+} \to X^{\cN}_{\P}$ is   $\cN$-equivariant. The
measures $\pi_{1*}\lambda_{w'}$ are then $\cN$-invariant measures,
hence are proportional to $\nu$. Each measure $\lambda_{w'}$ can be
disintegrated over $\nu$ by a family of measures 
$(m_{x,w'})_{x\in X^\cN_{\P}  ,\,w'\in Z_w}$ on $\R^*_{+}$ defined for $\nu$-almost every $x \in X_{\P}^{\cN}$, so that 
 $$\lambda_{w'}(B \times \{w'\} \times I  ) = \int_{B} \int_{I} m_{x,w'} d\nu(x),$$  
for any Borel sets $B \subset X^\cN_{\P}$ and $I \subset \R^*_{+}$.
Each measure $\lambda_{w'}$ is invariant under the action of transformations of
the kind $z \mapsto az$ for $a\in\R^*_{+}$. It is then straightforward
to check that  the measures $m_{x,w'}$ are
multiplication-invariant for almost every $x$. By unicity of the Haar measure, there exists
a measurable positive function $(x,w') \mapsto h(x,w')$ defined almost
everywhere so that $m_{x,w'} = h(x,w') m$. The $\cN$-invariance of the
measures $\lambda_{w'}$ implies that the map $h$ is almost surely
constant along the $\cN$-orbits, and the $\cR$-invariance of $\theta$
implies that $h$ is almost surely constant along the
$\cR$-orbits. This defines then a measurable map on the quotient space
by $\cR$  which is $G$-invariant, the ergodicity of
$\overline{\theta}$  implies this map is almost surely constant.       
 \end{proof} 
 
Notice that an invariant measure on $X_{\cP(w)}^G$ can be decomposed locally into the product of a measure on a Cantor set by a measure along the leaves. Since the map $R$ does not preserve the transversal measure on $\Omega$ in $X^\cN_{\cP}$, the holonomy groupoid  of $X^G_{\cP(w)}$ does not preserve the transversal measure on the Cantor set.\\
The $G$-action is locally free and acts transitively on each leaf, so
each orbits inherits a hyperbolic 2-manifolds structure. Actually,
$X^G_{\cP(w)}$ can be equipped with a  continuous metric  with a
constant curvature $-1$ in restriction to the leaves. The invariant measures of $X^G_{\cP(w)}$ have then also a geometric interpretation in terms of {\em harmonic} measures, a notion introduced by L. Garnett in \cite{Ga}. 
  \begin{defn} A probability measure $\mu$ on $M$ is harmonic if
    $$\int_{M} \Delta f d\mu =0$$ for any continuous function $f$
  {with  restriction to  leaves of class $\mathcal{C}^2$}, where $\Delta$ denotes the  Laplace-Beltrami operator in restriction on each leaf.
 \end{defn} 
\noindent  Actually, it is shown in \cite{Pe} , that on $X^G_{\cP(w)}$ the notions of harmonic and invariant measures are the same and such measures can be described in terms of inverse limit of vectoriel cones.

\section{Transformation groupoids}


We gather this section with results on groupoids and their
$C^*$-algebras. Good material on this topic can be found in
\cite{re}. Let us fix first some notations. 
Let $\G$ be a locally compact groupoid, with base space $X$, range and
source
maps respectively  $r:\G\to X$ and  $s:\G\to X$. Recall that $X$ can
be viewed as a closed subset of $\G$ (the set of units). For any element $x$ of
$X$, we set $$\G^x=\{\gamma\in\G\text{ such that } r(\gamma)=x\}$$
and $$\G_x=\{\gamma\in\G\text{ such that } s(\gamma)=x\}.$$
Let us denote for any $\gamma$ in $\G$  by $L_\gamma:\G^{s(\gamma)}\to\G^{r(\gamma)}$ the left
translation by $\gamma$. 
Thourought this section, all the groupoids will be assumed  locally
compact and second countable.
Recall that a Haar system $\lambda$ for $\G$ is a family
$(\lambda^x)_{x\in X}$ of borelian measures on $\G$ such that
\begin{enumerate}
\item the support of $\lambda^x$ is $\G^x$;
\item for any $f$ in $C_c(\G)$, the map $X\to \C;\,x\to
  \int_{\G^x}fd\lambda^x$ is continuous;
\item ${L_\gamma}_*\lambda^{s(\ga)}=\lambda^{r(\ga)}$ for all $\ga$ in
    $\Ga$.
\end{enumerate}
Our prominent examples of groupoid will be {\it semi-direct product groupoid}:
let $H$ be a locally compact group acting on a locally compact space
$X$. The  semi-direct product groupoid $X\rtimes H$ of $X$ by $H$ is
defined by 
\begin{itemize}
\item $X\times H$
as a topological space ;
\item the base space  is  $X$ and  the structure maps are $r:X\rtimes
H\to X;\, (x,h)\mapsto x$ and $s:X\rtimes
H\to X;\, (x,h)\mapsto h^{-1}x$ ;
\item  the product is 
$(x,h)\cdot(h^{-1}x,h')=(x,hh')$ for  $x$ in
$X$ and $h$ and $h'$ in $H$.
\end{itemize}
 Let $\mu$ be a left Haar mesure on
$H$. Then the  groupoid $X\rtimes H$ is equipped with a Haar system
$\lambda^\mu=(\lambda^\mu_{x})_{x\in X}$ 
  given for any $f$ in $C_c(X\times H)$ and any $x$ in
  $X$ by
$\lambda^\mu_{x}(f)=\int_{H}f(x,h)d\mu(h)$.
\subsection{Suspension of a
  groupoid}\label{subsec-groupoid-suspension}
Recall that any automorphism $\alpha$ of a groupoid $\G$ induces a
homeomorphism of its base space {$X$} that we shall denote by $\alpha_X$.
\begin{definition}
Let  $\G$ be a  groupoid  with base space $X$ equipped
with a Haar system $\lambda=(\lambda^x)_{x\in X}$. A groupoid automorphism 
$\alpha:\G\to\G$ is said to preserve the Haar system $\lambda$ if there exists a
continuous function  $\rho_\alpha:\G\to\R^+$
such that  for any $x$ in $X$ the measures $\alpha_*\lambda^x$ and
  $\lambda^{\alpha(x)}$ on
  $\G^{\alpha(x)}$ are in the same class and   $\rho_\alpha$
    restricted  to 
      $\G^{\alpha(x)}$ is   $\frac{d\alpha_*\lambda^x}{d\lambda^{\alpha(x)}}$. The map $\rho_\alpha$ is called the density of
      $\alpha$.
\end{definition}

\begin{remark}\label{rem-density}Let  $\G$ be a  groupoid  with base
  space $X$ and  Haar system $\lambda=(\lambda^x)_{x\in X}$ and let
  $\alpha:\G\to\G$ be  an automorphism of
groupoid preserving the Haar system $\lambda$.
\begin{enumerate}
\item   Since
$L_{\gamma}\circ\alpha=\alpha\circ L_{\alpha^{-1}(\gamma)}$ for any $\gamma$
  in $\G$, we get that
  $$L_{\gamma,*}\alpha_*\lambda^{\alpha^{-1}(s(\gamma))}=\alpha_*L_{\alpha^{-1}(\gamma),*}
  \lambda^{s(\alpha^{-1}(\gamma))}=\alpha_*\lambda^{r(\alpha^{-1}(\gamma))}.$$ Since 
  $L_{\gamma,*}\alpha_*\lambda^{\alpha^{-1}(s(\gamma))}$ is a measure  on  $\G^{r(\gamma)}$
absolutly
  continuous with respect to
  $L_{\gamma,*}\lambda^{s(\gamma)}=\lambda^{r(\gamma)}$ with density
  $\rho_\alpha\circ L_{\gamma^{-1}}$  we see that 
  $\rho_\alpha\circ
  L_{\gamma^{-1}}$ and $\rho_\alpha$ coincide on $\G^{r(\ga)}$.  In particular $\rho_\alpha$ is
  constant on $\G_x$ for any $x$ in $X$.
  
\item The automorphism of groupoid $\alpha^{-1}:\G\to\G$ also preserves
  the Haar system $\lambda$ and
  $\rho_{\alpha^{-1}}=1/\rho_{\alpha}\circ\alpha$.
\end{enumerate}
\end{remark}
\begin{definition}
Let  $\G$ be a  groupoid  with base space $X$, range
and source
map $r$ and $s$  and let  
$\alpha:\G\to\G$ be a groupoid automorphism. Using the notations of
section \ref{subsec-hull} the suspension of the groupoid  $\G$
respectively to  $\alpha$ is the groupoid $\G_\alpha\defi
(\G\times\R)/\cA_\alpha$ with base space  $X_{\alpha}\defi(X\times\R)/\cA_{\alpha_X}$. For any $\gamma$ in
$\G$ and $t$ in $\R$, let us denote by $[\gamma,t]$ the class of
$(\gamma,t)$ in $\G_\alpha$.
\begin{itemize}
\item The range map $r_\alpha$ and the source map $s_\alpha$ are
  defined in the following way:
\begin{itemize}
\item $r_\alpha([\gamma,t])=[r(\gamma),t]$ for every $\gamma$ in
$\G$ and $t$ in $\R$;
\item $s_\alpha([\gamma,t])=[s(\gamma),t]$ for every $\gamma$ in
$\G$ and $t$ in $\R$;
\end{itemize}
\item Let $\gamma$ and $\gamma'$ be elements of $\G$ such that
  $s(\gamma)=r(\gamma')$ and let $t$ be in
  $\R$, then $[\gamma,t]\circ[\gamma',t]=[\gamma\circ\gamma',t]$;
\item $[\gamma,t]^{-1}=[\gamma^{-1},t]$.
\end{itemize}
There is an action of $\R$ on $\G_\alpha$ by automorphisms given for $s$ in $\R$ and
$[\gamma,t]$ in $\G_\alpha$ by $s\cdot [\gamma,t]=[\gamma,t+s]$.

\end{definition}


\begin{lemma}\label{rem-pr} Let  $\G$ be a  groupoid  with base space $X$ equipped
with a Haar system $\lambda=(\lambda^x)_{x\in X}$ and  let
$\alpha:\G\to\G$ be an automorphism 
 preserving the Haar System $\lambda$. Let us assume that
 $\rho_\alpha=\rho_\alpha\circ\alpha$. Then $\G_\alpha$ admits a Haar
 system
$\lambda_\alpha=\left(\lambda_\alpha^{[x,t]}\right)_{[x,t]\in X_{\alpha}}$
  given for any $[x,t]$ in $X_{\alpha}$ and any continuous
  fonction $f$ in $C_c\left(\G_\alpha^{[x,t]}\right)$ by 
$$\lambda^{[x,t]}_\alpha(f)=\int_{\G^x}\rho_\alpha(\gamma)^{-t}f([\gamma,t])d\lambda^x(\gamma).$$
\end{lemma}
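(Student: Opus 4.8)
The plan is to check, in order, (i) that the displayed formula for $\lambda^{[x,t]}_\alpha$ does not depend on the chosen representative $(x,t)$ of the class $[x,t]\in X_{\alpha}$, and then (ii) that the resulting family satisfies the three axioms of a Haar system. Throughout I will use that the quotient maps $q\colon\G\times\R\to\G_\alpha$ and $X\times\R\to X_{\alpha}$ are covering maps — the $\Z$-actions generated by $\cA_\alpha$ and $\cA_{\alpha_X}$ being free and properly discontinuous because they shift the $\R$-coordinate by integers — and that for a fixed $t\in\R$ the map $\G^x\to\G_\alpha^{[x,t]};\,\gamma\mapsto[\gamma,t]$ is a homeomorphism, so that $\lambda^{[x,t]}_\alpha$ is simply the push-forward under this map of the measure $\rho_\alpha(\cdot)^{-t}\lambda^x$ on $\G^x$.

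For (i), a second representative of $[x,t]$ is $\cA_{\alpha_X}(x,t)=(\alpha_X(x),t-1)$, so I must show the integral is unchanged when $(x,t)$ is replaced by $(\alpha_X(x),t-1)$. Inserting the Radon--Nikodym relation $d\lambda^{\alpha_X(x)}=\rho_\alpha^{-1}\,d(\alpha_*\lambda^x)$ on $\G^{\alpha_X(x)}$, whose weight combines with the existing $\rho_\alpha^{-(t-1)}$ into $\rho_\alpha^{-t}$, and then performing the change of variables $\eta=\alpha(\gamma)$, $\gamma\in\G^x$, the integral over $\G^{\alpha_X(x)}$ becomes
$$\int_{\G^x}\rho_\alpha(\alpha(\gamma))^{-t}\,f([\alpha(\gamma),t-1])\,d\lambda^x(\gamma).$$
Now $[\alpha(\gamma),t-1]=[\gamma,t]$ in $\G_\alpha$, and the standing hypothesis $\rho_\alpha=\rho_\alpha\circ\alpha$ replaces $\rho_\alpha(\alpha(\gamma))^{-t}$ by $\rho_\alpha(\gamma)^{-t}$, so we recover the original integral. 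This is precisely where the hypothesis $\rho_\alpha=\rho_\alpha\circ\alpha$ is indispensable: without it the exponential weight $\rho_\alpha(\cdot)^{-t}$ would fail to descend to $X_{\alpha}$.

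Axiom (1), $\mathrm{supp}\,\lambda^{[x,t]}_\alpha=\G_\alpha^{[x,t]}$, is immediate from the push-forward description, since $\rho_\alpha(\cdot)^{-t}>0$ and $\mathrm{supp}\,\lambda^x=\G^x$. For axiom (2), let $f\in C_c(\G_\alpha)$ and put $F(\gamma,t)=\rho_\alpha(\gamma)^{-t}f([\gamma,t])$, a continuous function on $\G\times\R$; by (i) the map $(x,t)\mapsto\int_{\G^x}F(\gamma,t)\,d\lambda^x(\gamma)$ on $X\times\R$ is $\cA_{\alpha_X}$-invariant, hence descends to $X_{\alpha}$, and it suffices to prove its continuity on $X\times\R$. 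Using the covering structure of $q$ one checks that for each $t$ the slice $\{\gamma\in\G:[\gamma,t]\in\mathrm{supp}\,f\}$ is compact — so $F(\cdot,t)\in C_c(\G)$ — and that these supports remain in a fixed compact subset of $\G$ when $t$ stays in a compact interval; continuity of $(x,t)\mapsto\int_{\G^x}F(\gamma,t)\,d\lambda^x(\gamma)$ then follows from axiom (2) for $\lambda$, from the local boundedness of $x\mapsto\lambda^x(K)$ for $K$ compact, and from the uniform continuity of $F$ on compact sets. I expect this bookkeeping to be the most technical part, although it involves no new idea.

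Finally axiom (3), left invariance, is the structural point but is short given Remark \ref{rem-density}(1). Let $[\gamma_0,t]\in\G_\alpha$ with $s(\gamma_0)=y$ and $r(\gamma_0)=x$. Under the homeomorphisms $\G^y\cong\G_\alpha^{[y,t]}$ and $\G^x\cong\G_\alpha^{[x,t]}$ given by $\gamma\mapsto[\gamma,t]$, the left translation $L_{[\gamma_0,t]}$ becomes simply $L_{\gamma_0}\colon\G^y\to\G^x$, since $[\gamma_0,t]\circ[\delta,t]=[\gamma_0\delta,t]$. So I must show ${L_{\gamma_0}}_*\bigl(\rho_\alpha(\cdot)^{-t}\lambda^y\bigr)=\rho_\alpha(\cdot)^{-t}\lambda^x$ on $\G^x$. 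Testing against $\phi\in C_c(\G^x)$ and using axiom (3) for $\lambda$, namely ${L_{\gamma_0}}_*\lambda^y=\lambda^x$, this amounts to the identity $\rho_\alpha(\gamma_0\delta)=\rho_\alpha(\delta)$ for every $\delta\in\G^{s(\gamma_0)}$; but $\gamma_0\delta$ and $\delta$ have the same source $s(\delta)$, hence lie in the same fibre $\G_{s(\delta)}$, on which $\rho_\alpha$ is constant by Remark \ref{rem-density}(1). This completes the verification.
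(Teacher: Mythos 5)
Your proof is correct and follows essentially the same route as the paper's: well-definedness on the quotient via the Radon--Nikodym relation together with the hypothesis $\rho_\alpha=\rho_\alpha\circ\alpha$ (you run the same computation in the reverse direction), and left invariance reduced via $L_{\gamma'}{}_*\lambda^{s(\gamma')}=\lambda^{r(\gamma')}$ to the constancy of $\rho_\alpha$ on the fibres $\G_x$, i.e.\ Remark \ref{rem-density}(1). Your sketch of the continuity axiom, which the paper simply declares clear, is a sound elaboration and introduces no new idea.
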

\begin{proof}\
\begin{itemize}
\item Let us prove first that the definition of $\lambda^{[x,t]}_\alpha(f)$ for 
 $[x,t]$ in  $X_{\alpha}$ and $f$ in
 $C_c\left(\G_\alpha^{[x,t]}\right)$ makes sense.
\begin{eqnarray*}
\int_{\G^x}\rho_\alpha(\gamma)^{-t}f([\gamma,t])d\lambda^x(\gamma)&=&\int_{\G^x}\rho_\alpha(\alpha(\gamma))^{-t}f([\alpha^{-1}(\alpha(\gamma),t])d\lambda^x(\gamma)\\
&=&\int_{\G^{\alpha(x)}}\rho_\alpha(\gamma)^{-t+1}f([\alpha^{-1}(\gamma),t])d\lambda^{\alpha(x)}(\gamma)\\
&=&\int_{\G^{\alpha(x)}}\rho_\alpha(\gamma)^{-t+1}f([\gamma,t-1])d\lambda^{\alpha(x)}(\gamma).\\
\end{eqnarray*}\item It is clear that the continuity condition is
fullfilled. Let us show then  that $(\lambda^{[x,t]})_{[x,t]\in X_{\alpha}}$ is a Haar system. Let $\ga'$ be an element of $\G$, let $t$ be a real
number  and let $f$ be a
function in $C_c\left(\G_\alpha^{[r(\gamma'),t]}\right)$. Then
\begin{eqnarray*}
\lambda_\alpha^{[r(\gamma'),t]}(f)&=&\int_{\G^{r(\gamma')}}\rho_\alpha(\gamma)^{-t}f([\gamma,t])d\lambda^{r(\gamma')}(\gamma)\\
&=&\int_{\G^{s(\gamma')}}\rho_\alpha(\gamma'\cdot\gamma)^{-t}f([\gamma'\cdot\gamma,t])d\lambda^{s(\gamma')}(\gamma)\\
&=&\int_{\G^{s(\gamma')}}\rho_\alpha(\gamma)^{-t}f([\gamma'\cdot\gamma,t])d\lambda^{s(\gamma')}(\gamma)\\
&=&\lambda_\alpha^{[s(\gamma'),t]}(f\circ
L_{[\gamma',t]}),\end{eqnarray*}
where the third equality holds in view of remark \ref{rem-density}.
\end{itemize}
\end{proof}
\subsection{$C^*$-algebra of a suspension groupoid}\label{subsec-suspension}
Let us recall first the construction of the reduced
$C^*$-algebra $C_r^*(\G,\lambda)$  associated to  a 
groupoid $\G$ with base $X$ and Haar
system $\lambda=(\lambda^x)_{x\in X}$. Let $\L^2(\G,\lambda)$ be the
$C_0(X)$-Hilbert completion of $C_c(\G)$ equipped with the
$C_0(X)$-valued scalar product 
$$\langle
\phi,\phi'\rangle(x)=\int_{\G^x}\bar{\phi}(\gamma^{-1})\phi'(\gamma^{-1})d\lambda^x(\gamma)$$
for $\phi$ and $\phi'$ in $C_c(\G)$ and $x$ in $X$, i.e the completion
of $C_c(\G)$ with respect to the norm $\|\phi\|=\sup_{x\in X}\langle
\phi,\phi\rangle^{1/2}$. 
The $C_0(X)$-module structure on  $C_c(\G)$ extends to
$\L^2(\G,\lambda)$ and $\langle
\bullet,\bullet\rangle$ extends to a $C_0(X)$-valued scalar product on
$\L^2(\G,\lambda)$. Recall that an operator
$T:\L^2(\G,\lambda)\to\L^2(\G,\lambda)$ is called adjointable if there
exists an operator $T^*:\L^2(\G,\lambda)\to\L^2(\G,\lambda)$, called
the adjoint of $T$ such that $$\langle
T^*\phi,\phi'\rangle=\langle
\phi,T\phi'\rangle$$ for all $\phi$ and $\phi'$ in
$\L^2(\G,\lambda)$. Notice that the adjoint, when it exists is unique
and that operator that admits an adjoint are automatically
$C_0(X)$-linear and continuous. The set of adjointable operators on
$\L^2(\G,\lambda)$ is then a $C^*$-algebra with respect to the
operator norm.
Then any $f$ in
$C_c(\G)$ acts as an adjointable  operator on
$\L^2(\G,\lambda)$ by convolution
$$f\cdot\phi(\gamma)=\int_{\G^{r(\gamma)}}f(\gamma')\phi(\gamma'^{-1}\gamma)d\lambda^{r(\gamma)}(\gamma')$$
where  $\phi$ is in $C_c(\G)$, the  adjoint of this operator being given by the action of
$f^*:\gamma\mapsto \bar{f}(\gamma^{-1})$.
The convolution product provides a structure of involutive algebra on
$C_c(\G)$ and using the action defined above, this algebra can be viewed as a subalgebra of the
$C^*$-algebra of adjointable operators of $\L^2(\G,\lambda)$. The
reduced $C^*$-algebra $C_r^*(\G,\lambda)$ is then the closure of
$C_c(\G)$ in the $C^*$-algebra of adjointable operators of
$\L^2(\G,\lambda)$. Namely, if we define for $x$ in $X$ the measure on
$\G_x$ by $\lambda_x(\phi)=\int_{\G^x}\phi(\gamma^{-1})d\lambda^x(\gamma)$
for any $\phi$ in $C_c(\G_x)$, then $\L^2(\G,\lambda)$ is a continuous
field of Hilbert spaces over $X$ with
fiber $\L^2(\G_x,\lambda_x)$ at $x$ in $X$. The  corresponding $C_0(X)$-structure on
$C_r^*(\G,\lambda)$ is then given for $h$ in $C_0(X)$ by the multiplication by
$h\circ s$.
\begin{ex} Let $H$ be a locally compact group acting on a locally
  compact space $X$, and consider the semi-direct product  groupoid 
 $X\rtimes H$  equipped with a Haar system
arising from the Haar measure on $H$.
Then  $C^*_r(X\times H,\lambda^\mu)$ is the usual reduced crossed product
$C_0(X)\rtimes_r H$. 
\end{ex}

 Let us  denote for any $x$ in $X$ by $\nu_x$ the representation of
$C_r^*(\G,\lambda)$ on the fiber $\L^2(\G_x,\lambda_x)$. Then for any
$f$ in $C_r^*(\G,\lambda)$, we get that
$\|f\|_{C_r^*(\G,\lambda)}=\sup_{x\in X}\|\nu_x(f)\|$.

\begin{lemma}\label{lem-mt}
 Let  $\G$ be a locally compact groupoid  with base space $X$ equipped
with a Haar system $\lambda=(\lambda^x)_{x\in X}$ and  let
$\alpha:\G\to\G$ be an automorphism 
 preserving the Haar System $\lambda$. Let us define the continuous
 map
 $\rho_\alpha':\G\to\R;\,\gamma\mapsto\rho_\alpha(\gamma^{-1})$. Then
 there exists a unique 
 automorphism $\tilde{\alpha}$  of the $C^*$-algebra
 $C_r^*(\G,\lambda)$ such that for every $f$ in $C_c(\G)$ we have
 $\tilde{\alpha}(f)=(\rho_\alpha'\rho_\alpha)^{1/2}f\circ\alpha^{-1}$.
\end{lemma}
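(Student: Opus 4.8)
The plan is to build $\tilde\alpha$ by implementing $\alpha$ spatially on the fiber representations $\nu_x$ of $C_r^*(\G,\lambda)$. Uniqueness is immediate, since $C_c(\G)$ is dense in $C_r^*(\G,\lambda)$. For existence, set $\sigma=(\rho_\alpha'\rho_\alpha)^{1/2}$, a strictly positive continuous function on $\G$, and let $\beta\colon C_c(\G)\to C_c(\G)$ be $\beta(f)=\sigma\cdot(f\circ\alpha^{-1})$; this is well defined because $\alpha$ is a homeomorphism with continuous inverse. The key structural input is the last assertion of Remark~\ref{rem-density}(1): $\rho_\alpha$ is constant on each $\G_x$, hence factors through the source map, $\rho_\alpha=\bar\rho\circ s$, where $\bar\rho\colon X\to\R^+$ is the value of $\rho_\alpha$ on the units. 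Consequently $\rho_\alpha'=\bar\rho\circ r$ and $\sigma=\big((\bar\rho\circ r)(\bar\rho\circ s)\big)^{1/2}$, and this splitting immediately gives the symmetry $\sigma(\gamma)=\sigma(\gamma^{-1})$ and, whenever $r(\eta)=r(\gamma)$, the cocycle-type identity $\sigma(\gamma)\,\rho_\alpha(\eta)=\sigma(\eta)\,\sigma(\eta^{-1}\gamma)$. Using these together with the change of variables $\int_{\G^x}h(\alpha^{-1}(\eta))\,\rho_\alpha(\eta)\,d\lambda^x(\eta)=\int_{\G^{\alpha_X^{-1}(x)}}h\,d\lambda^{\alpha_X^{-1}(x)}$ --- the defining property of $\rho_\alpha$ read at $\alpha_X^{-1}(x)$ --- a direct computation gives $\beta(f*g)=\beta(f)*\beta(g)$ and $\beta(f^*)=\beta(f)^*$, so $\beta$ is a $*$-endomorphism of the convolution algebra $C_c(\G)$.

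To extend $\beta$ to $C_r^*(\G,\lambda)$ it suffices to show it is isometric for the reduced norm $\|g\|=\sup_{y\in X}\|\nu_y(g)\|$. Fix $x\in X$. Since $\alpha$ restricts to a homeomorphism $\G_x\to\G_{\alpha_X(x)}$, the defining property of the density, combined with the relation between $\lambda_x$ and $\lambda^x$, gives $\alpha_*\lambda_x=\rho_\alpha'\cdot\lambda_{\alpha_X(x)}$; hence
$$W_x\colon\L^2(\G_x,\lambda_x)\lto\L^2(\G_{\alpha_X(x)},\lambda_{\alpha_X(x)}),\qquad (W_x\xi)(\eta)=\rho_\alpha'(\eta)^{1/2}\,\xi\big(\alpha^{-1}(\eta)\big),$$
is a surjective isometry, that is, a unitary, whose inverse is built from $\alpha^{-1}$ in the same way. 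Writing the fiber representation in the form $\big(\nu_x(f)\xi\big)(\gamma)=\int_{\G_x}f(\gamma\gamma'^{-1})\,\xi(\gamma')\,d\lambda_x(\gamma')$ for $\gamma\in\G_x$ (which follows from the convolution formula and left-invariance of $\lambda$), a computation using the two identities for $\sigma$ above gives $W_x\,\nu_x(f)\,W_x^*=\nu_{\alpha_X(x)}\big(\beta(f)\big)$ for all $f\in C_c(\G)$. Since $\alpha_X$ is a bijection of $X$, we get $\|\beta(f)\|=\sup_{x\in X}\|\nu_{\alpha_X(x)}(\beta(f))\|=\sup_{x\in X}\|\nu_x(f)\|=\|f\|$, so $\beta$ extends to an isometric $*$-endomorphism $\tilde\alpha$ of $C_r^*(\G,\lambda)$ satisfying the stated formula on $C_c(\G)$.

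It remains to see that $\tilde\alpha$ is surjective. By Remark~\ref{rem-density}(2), $\alpha^{-1}$ also preserves $\lambda$, so the same construction produces an isometric $*$-endomorphism $\widetilde{\alpha^{-1}}$ of $C_r^*(\G,\lambda)$ given on $C_c(\G)$ by $f\mapsto(\rho_{\alpha^{-1}}'\rho_{\alpha^{-1}})^{1/2}\,f\circ\alpha$; using $\rho_{\alpha^{-1}}=1/(\rho_\alpha\circ\alpha)$ one checks on the dense subalgebra $C_c(\G)$ that $\tilde\alpha\circ\widetilde{\alpha^{-1}}$ and $\widetilde{\alpha^{-1}}\circ\tilde\alpha$ act as the identity, and hence on all of $C_r^*(\G,\lambda)$ by continuity; therefore $\tilde\alpha$ is an automorphism. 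I expect the only real difficulty to be bookkeeping: keeping source and range conventions straight in the change-of-variables steps and in the verification of $W_x\nu_x(f)W_x^*=\nu_{\alpha_X(x)}(\beta(f))$, and checking that the single scalar $\sigma$ simultaneously repairs multiplicativity and the involution. What makes these verifications mechanical is the factorization $\sigma=\big((\bar\rho\circ r)(\bar\rho\circ s)\big)^{1/2}$ coming from Remark~\ref{rem-density}(1); once it is available, every needed identity is a one-line manipulation. (The hypothesis $\rho_\alpha=\rho_\alpha\circ\alpha$, which was needed in Lemma~\ref{rem-pr}, is in fact not used here.)
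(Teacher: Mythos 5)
Your proof is correct and takes essentially the same route as the paper: the paper defines the invertible map $W$ on the Hilbert module $\L^2(\G,\lambda)$ by $\phi\mapsto\rho_\alpha'^{1/2}\phi\circ\alpha^{-1}$ and sets $\tilde\alpha=W\cdot(\,\cdot\,)\cdot W^{-1}$, and your unitaries $W_x$ are exactly the fibers of that $W$ over the continuous field $\left(\L^2(\G_x,\lambda_x)\right)_{x\in X}$. The only difference is presentational: you verify the $*$-homomorphism property of $\beta$ on $C_c(\G)$ by hand via the factorization $\rho_\alpha=\bar\rho\circ s$, whereas the paper gets it for free from conjugation by $W$.
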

\begin{proof}
The map $C_c(\G)\to C_c(\G);\,
\phi\mapsto\rho_\alpha'^{1/2}\phi\circ\alpha^{-1}$ extends uniquely to
a continuous linear and invertible map
$W:\L^2(\G,\lambda)\to\L^2(\G,\lambda)$ such that $$\langle
W\cdot\phi,W\cdot\phi\rangle(x)=\langle
\phi,\phi\rangle(\alpha^{-1}(x)),$$ for all $x$ in $X$. Its  inverse
$W^{-1}$ is defined by 
$W^{-1}(\phi)=({\rho'}_\alpha\circ\alpha)^{-1/2}\phi\circ\alpha$
for all $\phi$ in $C_c(\G)$. Let us define
$$\tilde{\alpha}:C_r^*(\G,\lambda)\to C_r^*(\G,\lambda);\, x\mapsto
W\cdot x\cdot W^{-1}.$$ Then $W\cdot f\cdot
W^{-1}=(\rho_\alpha'\rho_\alpha)^{1/2}f\circ\alpha^{-1}$ for all $f$
in $C_c(\G)$.
\end{proof} 
Recall that if $A$ is a $C^*$-algebra and if $\beta$ is an
automorphism of $A$ then the mapping torus of $A$ is the $C^*$-algebra
$$A_\beta=\{f\in C([0,1],A)\text{ such that }\beta(f(1))=f(0)\}.$$
Namely, the mapping torus $A_\beta$ can be viewed as the algebra of
 continuous function   $h:\R\to A$ such that
$h(t)=\beta(h(t+1))$ for all $t$ in $\R$.
In this picture, there is an action  of $\R$ on $A_\beta$ by
translations defined for $t$ in $\R$ and $f$ in $A_\beta$ by $$t\cdot f(s)=f(s-t)$$ for any $s$ in $\R$. Translations
then  define a strongly continuous action by automorphisms $\widehat{\beta}$ of $\R$ on
$A_\beta$. By the mapping torus isomorphism, we have a natural Morita
equivalence between $A\rtimes_\beta\Z$ and $A\rtimes_{\widehat{\beta}}\R$.

Let  $\alpha$ be an automorphism of a groupoid $\G$ preserving a Haar
system $\lambda$ and with density $\rho_\alpha$. 
For a function $f$ in $C_c(\G_\alpha)$, we define $\hat{f}$ in
$C_c([0,1]\times\G)\subset C([0,1],C_r^*(\G,\lambda))$ by
$\hat{f}(t,\gamma)=\rho_\alpha^{-t/2}(\gamma){\rho'}_\alpha^{-t/2}(\gamma)
f([\gamma,t])$. We can check easily that $\hat{f}$ belongs to the
mapping torus $C_r^*(\G,\lambda)_{\tilde{\alpha}}$.

\begin{proposition}\label{prop-iso}
 Let  $\G$ be a locally compact groupoid  with base space $X$ equipped
with a Haar system $\lambda=(\lambda^x)_{x\in X}$ and  let
$\alpha:\G\to\G$ be an automorphism 
 preserving the Haar System $\lambda$ such that
 $\rho_\alpha\circ\alpha=\rho_\alpha$. Then there is  an unique automorphism of
 $C^*$-algebras $$\Lambda_\alpha:C_r^*(\G_\alpha,\lambda_\alpha)\to
 C_r^*(\G,\lambda)_{\tilde{\alpha}}$$ such that
 $\Lambda_\alpha(f)=\hat{f}$ for any $f$ in $C_c(\G_\alpha)$.
\end{proposition}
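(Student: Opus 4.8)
The goal is to produce a $C^*$-algebra isomorphism $\Lambda_\alpha : C_r^*(\G_\alpha,\lambda_\alpha) \to C_r^*(\G,\lambda)_{\tilde\alpha}$ that on the level of the dense $*$-subalgebra $C_c(\G_\alpha)$ is given by the explicit formula $f \mapsto \hat f$. The strategy is the obvious one: define the map on $C_c(\G_\alpha)$, check it is a $*$-homomorphism for the convolution structures, check it is isometric for the reduced norms, and then extend by continuity and density; for surjectivity, exhibit a dense subalgebra of the target in the image. I would organize the proof around the fibered (GNS-type) representations $\nu_x$ recalled just before Lemma~\ref{lem-mt}, since the reduced norm is computed fiberwise as $\|f\| = \sup_x \|\nu_x(f)\|$, and the mapping-torus picture of $C_r^*(\G,\lambda)_{\tilde\alpha}$ as continuous functions $h:\R\to C_r^*(\G,\lambda)$ with $h(t)=\tilde\alpha(h(t+1))$.

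\textbf{Step 1: $\hat f$ lies in the mapping torus and $f\mapsto\hat f$ is linear.} This is essentially the remark preceding the statement: for $f\in C_c(\G_\alpha)$ one checks $\hat f(t,\cdot)$ is a compactly supported continuous function on $\G$, hence an element of $C_r^*(\G,\lambda)$, that $t\mapsto \hat f(t,\cdot)$ is norm-continuous, and that the boundary relation $\hat f(0,\cdot)=\tilde\alpha(\hat f(1,\cdot))$ holds; the last point uses the defining formula $\tilde\alpha(g)=(\rho_\alpha'\rho_\alpha)^{1/2}g\circ\alpha^{-1}$ from Lemma~\ref{lem-mt} together with the identity $[\gamma,1]=[\alpha^{-1}(\gamma),0]$ in $\G_\alpha$ and the hypothesis $\rho_\alpha\circ\alpha=\rho_\alpha$. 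Linearity is clear.

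\textbf{Step 2: $f\mapsto\hat f$ is multiplicative and $*$-preserving.} Here one writes out the convolution product on $C_c(\G_\alpha)$ using the Haar system $\lambda_\alpha$ from Lemma~\ref{rem-pr}, namely integration against $\rho_\alpha(\gamma)^{-t}d\lambda^x(\gamma)$ over the fiber at level $t$, and compares with the pointwise (in $t$) convolution product in $C_r^*(\G,\lambda)$. The weight factors $\rho_\alpha^{-t/2}{\rho'_\alpha}^{-t/2}$ built into $\hat f$ are chosen precisely so that, when one multiplies $\hat f_1(t,\cdot)\ast\hat f_2(t,\cdot)$ and changes variables, the cocycle property of $\rho_\alpha$ — i.e.\ that $\rho_\alpha$ is constant on each $\G_x$ and satisfies $\rho_\alpha\circ L_{\gamma^{-1}}=\rho_\alpha$ on $\G^{r(\gamma)}$ (Remark~\ref{rem-density}(1)) — makes the exponent bookkeeping close up to give $\widehat{f_1\ast f_2}$. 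The $*$-identity, $\widehat{f^*}=(\hat f)^*$, is a shorter computation using that the involution on $\G_\alpha$ is $[\gamma,t]\mapsto[\gamma^{-1},t]$ and that $\rho_\alpha(\gamma^{-1})=\rho_\alpha'(\gamma)$ by definition.

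\textbf{Step 3: isometry and extension.} To show $\|\hat f\|_{C_r^*(\G,\lambda)_{\tilde\alpha}} = \|f\|_{C_r^*(\G_\alpha,\lambda_\alpha)}$ I would identify, for each $[x,t]\in X_\alpha$, the fiber representation $\nu_{[x,t]}$ of $C_r^*(\G_\alpha,\lambda_\alpha)$ on $\L^2(\G_{\alpha,[x,t]},\lambda_{\alpha,[x,t]})$ with (a unitary conjugate of) the representation $g\mapsto \nu_x(g)$ of $C_r^*(\G,\lambda)$ on $\L^2(\G_x,\lambda_x)$ evaluated on $\hat f(t,\cdot)$; the unitary is the obvious one rescaling by $\rho_\alpha^{\pm t/2}$, and it is a genuine unitary precisely because the measure on the fiber of $\lambda_\alpha$ carries the density $\rho_\alpha(\gamma)^{-t}$. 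Since the norm on a mapping torus is the sup over $t\in[0,1]$ of the norm in $C_r^*(\G,\lambda)$, and the latter is $\sup_x\|\nu_x(\cdot)\|$, both reduced norms reduce to the same supremum of operator norms; hence $f\mapsto\hat f$ is isometric, so it extends uniquely to an injective $*$-homomorphism $\Lambda_\alpha$ on the completion. For surjectivity, note the image contains all $\hat f$ with $f\in C_c(\G_\alpha)$, and one checks these are dense in $C_r^*(\G,\lambda)_{\tilde\alpha}$: functions of the form $(t,\gamma)\mapsto a(t)g(\gamma)$ with $g\in C_c(\G)$ and suitable $a$ span a dense subset of the mapping torus, and each such function, up to the fixed nonvanishing smooth weight $\rho_\alpha^{-t/2}{\rho'_\alpha}^{-t/2}$, is $\hat f$ for an explicit $f\in C_c(\G_\alpha)$. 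Uniqueness of $\Lambda_\alpha$ is immediate from density of $C_c(\G_\alpha)$.

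\textbf{Main obstacle.} The routine-looking but genuinely delicate point is Step~2 together with the fiberwise unitary identification in Step~3: all the $\rho_\alpha^{\pm t/2}$ and ${\rho'_\alpha}^{\pm t/2}$ factors have to be tracked through the changes of variables $\gamma\mapsto\gamma'\cdot\gamma$ and $\gamma\mapsto\alpha(\gamma)$, and one must invoke at the right moments both that $\rho_\alpha$ is $\alpha$-invariant (the standing hypothesis) and that it is constant along source fibers while transforming by the left-translation cocycle along range fibers (Remark~\ref{rem-density}). If any of these were to fail, the weighted convolution would not close up to $\widehat{f_1\ast f_2}$ and the putative fiber unitary would not be isometric; so the content of the proof is exactly checking that the two normalizations — the one defining $\lambda_\alpha$ and the one defining $\hat f$ — are mutually compatible.
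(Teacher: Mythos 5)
Your proposal is correct and follows essentially the same route as the paper: the paper likewise establishes the isometry by conjugating the fiber representation $\nu_{[x,t]}$ of $C_r^*(\G_\alpha,\lambda_\alpha)$ with $\nu_x(\hat f(t,\bullet))$ via the rescaling isometry $W_t:\phi\mapsto{\rho'}_\alpha^{-t/2}\phi\circ\pi_t$, and then obtains surjectivity by exhibiting an explicit preimage for each element of a dense subset $\mathcal{A}_\alpha$ of the mapping torus. Your Step 2 (the explicit check that $f\mapsto\hat f$ is a $*$-homomorphism) is left implicit in the paper but is the right thing to verify.
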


\begin{proof}Let $f$ be a function  of $C_c(\G_\alpha)$. Then
\begin{eqnarray*}
\|\hat{f}\|_{
  C_r^*(\G,\lambda)_{\tilde{\alpha}}}&=&\sup_{t\in[0,1]}\|\hat{f}(t,\bullet))\|_{
  C_r^*(\G,\lambda)}\\
&=& \sup_{t\in[0,1],\,x\in X}\|\nu_x(\hat{f}(t,\bullet))\|
\end{eqnarray*}
On the other hand,
$$\displaystyle\|f\|_{C_r^*(\G_\alpha,\lambda_\alpha)}=\sup_{t\in[0,1],\,x\in
    X}\|\nu_{[x,t]}(f)\|,$$ where $\nu_{[x,t]}$ is the representation
  of $C_r^*(\G_\alpha,\lambda_\alpha)$ on the fiber
    $\L^2(\G_{\alpha,[x,t]},\lambda_{\alpha,[x,t]})$ at 
    $[x,t]\in (X\times\R)/\cA_{\alpha_X}$.
If we define for $t$ in $[0,1]$ the map
$\pi_t:\G\to\G_\alpha:\,\gamma\mapsto [\gamma,t]$, then 
$$C_c(\G_{[x,t]})\to C_c(\G_x):\,\phi\mapsto{\rho'}_\alpha^{-t/2}\phi\circ\pi_t$$
  extends to an isometry
  $W_t:\L^2(\G_{\alpha,[x,t]},\lambda_{\alpha,[x,t]})\to\L^2(\G_x,\lambda_x)$ and
 $W_t$ conjugate $\nu_{[x,t]}(f)$ and $\nu_x(\hat{f}(t,\bullet))$. Thus
 $\|\hat{f}\|_{
  C_r^*(\G,\lambda)_{\tilde{\alpha}}}=\|f\|_{C_r^*(\G_\alpha,\lambda_\alpha)}$ and 
 $$C_c(\G_\alpha)\to
 C_r^*(\G,\lambda)_{\tilde{\alpha}};\,f\mapsto\hat{f}$$ extends to a
monomorphism   $\Lambda_\alpha:C_r^*(\G_\alpha,\lambda_\alpha)\to
 C_r^*(\G,\lambda)_{\tilde{\alpha}}$. The set $$\mathcal{A}_\alpha=\{h\in
 C_c([0,1]\times\G)\text{ such that
 }h(1,\alpha(\gamma))={\rho'}_\alpha^{t/2}\rho_\alpha^{t/2}h(0,\gamma)\text{ for all }\gamma\in\G\}$$ is
 dense in $C_r^*(\G,\lambda)_{\tilde{\alpha}}$. Let us define for an
 element $h$
 of $\mathcal{A}_\alpha$ the map $\tilde{h}:\G\times\R\to\C$
 as the unique  map such that
\begin{itemize}
\item
  $\tilde{h}(\gamma,t)={\rho'}_\alpha^{-t/2}\rho_\alpha^{-t/2}h(t,\gamma)$ for all $\gamma$ in $\G$ and $t$ in $[0,1]$;
\item $h(\alpha(\gamma),t)=h(\gamma,t+1)$  for all $\gamma$ in $\G$
  and $t$ in $\R$.
\end{itemize}
Then $\tilde{h}$ defines a continuous map of $C_c(\G_\alpha)$ whose image under $\Lambda_\alpha$ is $h$. Hence $\Lambda_\alpha$ has dense range in  $C_r^*(\G,\lambda)_{\tilde{\alpha}}$ and thus is surjective.
\end{proof}
\begin{remark}\label{rem-action}
With  the notations of  above  proposition, let us define for a real $s$ the automorphism of groupoid
$\theta_s:\G_\alpha\to\G_\alpha;\,[\gamma,t]\mapsto[\gamma,s+t]$. Then
$\theta_s$ is preserving the Haar system
$\lambda_\alpha=(\lambda^{[x,t]})_{[x,t]\in X_{\alpha}}$ with
density $$\G_\alpha\to\R;\,[\gamma,t]\mapsto \rho_\alpha(\gamma)^{s}.$$ We
obtain from lemma \ref{lem-mt} an automorphism $\tilde{\theta}_s$ of
$C_r^*(\G_\alpha,\lambda_\alpha)$ which gives rise to a strongly
continuous action of $\R$ on $C_r^*(\G_\alpha,\lambda_\alpha)$ by
automorphism. The isomorphism $$\Lambda_\alpha:C_r^*(\G_\alpha,\lambda_\alpha)\to
 C_r^*(\G,\lambda)_{\tilde{\alpha}}$$ of proposition \ref{prop-iso} is
 then $\R$-equivariant, where 
the action of
$\R$ on $C_r^*(\G,\lambda)_{\tilde{\alpha}}$
is  the action $\widehat{\tilde{\alpha}}$ associated to a mapping torus.
\end{remark}

\section{The dynamic of the uncolored Penrose tiling under
  translations}\label{sec-penrose-uncolor}

As we have seen before, the closure   $X^\cN_{\mathcal{P}}$ of
$\cN\cdot{\mathcal{P}}$ for the tiling topology  is the suspension 
$(\Omega\times\R)\slash \cA_o$ of the odometer homeomorphism
$o:\Omega\to\Omega;\,x\mapsto x+1$, where $\Omega$ is  the
dyadic completion  of the integers. The $\R$-algebra
$C({(\Omega\times\R)}\slash {\cA_o})$ is then the mapping torus algebra of
$C(\Omega)$ with respect to automorphism induced by $o$. In
consequence, the crossed product algebras
$C(X^\cN_{\mathcal{P}})\rtimes\R$ and $C(\Omega)\rtimes\Z$ are Morita
equivalent. The purpose of this section is to recall the explicit
description of the isomorphism
$C(\Omega)\rtimes\Z\stackrel{\cong}{\to}
C(X^\cN_{\mathcal{P}})\rtimes\R $ arising from this
Morita equivalence.

For this, let us define on $C_c(\Omega\times\R)$ the
$C(\Omega)\rtimes\Z$-valued inner product
$$\langle\xi,\,\xi'\rangle(\omega,k)=\int_\R\bar{\xi}(\omega,s)\xi'(\omega-k,s+k)ds$$for
$\xi$ and $\xi'$ in $C_c(\Omega\times\R)$ and $(\omega,k)$ in
$(\Omega\times\R)$. This inner product is positive and gives rise to a
right $C(\Omega)\rtimes\Z$-Hilbert module $\E$, the action of
$C(\Omega)\rtimes\Z$ being given for $h$ in $C_c(\Omega\times\Z)$ and $\xi$ in 
$C_c(\Omega\times\R)$ by
$$\xi\cdot
h(\omega,t)=\sum_{n\in\Z}\xi(n+\omega,t-n)h(n+\omega,n)$$ for
 $(\omega,k)$ in
$\Omega\times\R$. The right $C(\Omega)\rtimes\Z$-Hilbert module $\E$
is also equipped with a left action of
$C\left({(\Omega\times\R)}\slash {\cA_o}\right)\rtimes\R$ given for $f$ in
$C_c\left({(\Omega\times\R)}\slash {\cA_o}\times\R\right)$ and $\xi$ in 
$C_c(\Omega\times\R)$ by
$$f\cdot\xi(\omega,t)=\int_\R f([\omega,t],s)\xi(\omega,t-s)ds$$for
 $(\omega,k)$ in
$\Omega\times\R$. We get in this way a
$C\left({(\Omega\times\R)}\slash {\cA_o}\right)\rtimes\R-C(\Omega)\rtimes\Z$
imprimitivity bimodule which implements the Morita equivalence we are
looking for. Actually, there is an isomorphism of right
$C(\Omega)\rtimes\Z$-Hilbert module
$$\Psi:\E\to {L}^2([0,1])\otimes C(\Omega)\rtimes\Z$$
 defined in  a unique way  by
$\Psi(g)=g\otimes u$ for  $g$ in $C_c(\R)$ supported in $(0,1)$, where
$u$ is the unitary of $C(\Omega)\rtimes\Z$ corresponding to the
positive generator of $\Z$.
Using the right
$C\left({(\Omega\times\R)}\slash {\cA_o}\right)\rtimes\R$-module structure of
the  $C\left({(\Omega\times\R)}\slash {\cA_o}\right)\rtimes\R-C(\Omega)\rtimes\Z$
imprimitivity bimodule $\E$ and the isomorphism $\Psi$, we get an
isomorphism
\begin{equation}\label{equ-Morita}
C\left({(\Omega\times\R)}\slash {\cA_o}\right)\rtimes\R\stackrel{\cong}{\to}\K({L}^2([0,1]))\otimes
C(\Omega)\rtimes\Z.
\end{equation}
This isomorphism can be described as follows. Let us define for $f$
and $g$ in  ${L}^2([0,1])$ the rank one operator
$$\Theta_{f,g}:{L}^2([0,1])\to{L}^2([0,1]);\,h\mapsto f\langle
g,h\rangle.$$ We define for $\xi$ and $\xi'$ in $C_c(\Omega\times\R)$
the  continuous function of
$C_c\left({(\Omega\times\R)}\slash {\cA_o}\times\R\right)$ 
$$\Theta^\Omega_{\xi,\xi'}([\omega,s],t)=\sum_{k\in\Z}\xi(\omega+k,s-k)\bar{\xi'}(\omega+k,s-t-k)
$$  
  for all $\omega$ in $\Omega$ and $s$ and $t$ in $\R$.
It is straightforward to check that $\Theta^\Omega_{\xi,\xi'}$ is well
defined and that
$$\Theta^\Omega_{\xi,\xi'}\cdot\eta=\xi\langle\xi',\eta\rangle$$for
all $\eta$ in $C_c(\Omega\times\R)$.
If we set for $f$ and $g$ in $C_c(\R)$ with support in $(0,1)$ and for $\phi$ in
$C(\Omega)$,
$\xi=1\otimes f$, $\xi'=\phi\otimes g$ and
$\xi'':\Omega\times\R\to\R;(\omega,t)\mapsto g(t+1)$, then the image
of $\Theta^\Omega_{\xi,\xi'}$ under the isomorphism of equation
(\ref{equ-Morita}) is $\Theta_{f,g}\otimes\phi\in\K({L}^2([0,1]))\otimes
C(\Omega)\rtimes\Z$ and moreover,
\begin{equation}\label{equ-th1}\Theta^\Omega_{\xi,\xi'}([\omega,s],t)=\sum_{k\in\Z}f(s-k)\bar{\phi}(\omega+k)\bar{g}(s-t-k).\end{equation}
 The image
of $\Theta^\Omega_{\xi,\xi''}$ under the isomorphism of equation
(\ref{equ-Morita}) is $\Theta_{f,g}\otimes u \in\K({L}^2([0,1]))\otimes
C(\Omega)\rtimes\Z$ and moreover,
\begin{equation}\label{equ-th2}\Theta^\Omega_{\xi,\xi''}([\omega,s],t)=\sum_{k\in\Z}f(s-k)\bar{g}(s+1-t-k).\end{equation}
Let us define  the automorphism $\alpha$  of the groupoid
${(\Omega\times\R)}\slash {\cA_o}\rtimes\R$ in the following way
\begin{itemize}
\item $\alpha([\omega,s],t)=([\omega/2,s/2],t/2)$ if $\omega$ is even;
\item $\alpha([\omega,s],t)=([(\omega+1)/2,(s+1)/2],t/2)$ if $\omega$
  is odd.
\end{itemize}
Notice that $\alpha^{-1}([\omega,s],t)=([2\omega,2s],2t)$ for all
$\omega$ in $\Omega$ and $s$ and $t$ in $\R$.
Then $\alpha$ preserves the Haar system of
${(\Omega\times\R)}\slash {\cA_o}\rtimes\R$ arising from the  Haar mesure on
$\R$ and has constant density $\rho_\alpha=2$. Hence  according to lemma
\ref{lem-mt}, the automorphism of groupoid $\alpha$ induces an
automorphism $\tilde{\alpha}$ of $C^*$-algebra
$C\left({(\Omega\times\R)}\slash {\cA_o}\right)\rtimes\R$ such that 
$\tilde{\alpha}(h)=2h\circ\alpha^{-1}$ for all $h$ in
$C\left({(\Omega\times\R)}\slash {\cA_o}\times\R\right)$.
We are now in position to describe how $\tilde{\alpha}$ is transported under
the isomorphism of equation (\ref{equ-Morita}) to an automorphism $\Upsilon$ of 
$\K({L}^2([0,1]))\otimes
C(\Omega)\rtimes\Z$. With $\xi$, $\xi'$ and $\xi''$ as defined above,
\begin{eqnarray}\label{equ-th7}
\nonumber\tilde{\alpha}(\Theta^\Omega_{\xi,\xi'})([\omega,s],t)&=&2\Theta^\Omega_{\xi,\xi'}([2\omega,2s],2t)\\
\nonumber &=&2\sum_{k\in\Z}f(2s-k)\bar{\phi}(2\omega+k)\bar{g}(2s-2t-k)\\
 &=&2\sum_{k\in\Z}f(2s-2k)\bar{\phi}(2\omega+2k)\bar{g}(2s-2t-2k)+\\
\nonumber &&2\sum_{k\in\Z}f(2s-2k-1)\bar{\phi}(2\omega+2k-1)\bar{g}(2s-2t-2k-1)
\end{eqnarray}
and

\begin{eqnarray}\label{equ-th8}
\nonumber\tilde{\alpha}(\Theta^\Omega_{\xi,\xi''})([\omega,s],t)&=&2\Theta^\Omega_{\xi,\xi''}([2\omega,2s],2t)\\
\nonumber &=&2\sum_{k\in\Z}f(2s-k)\bar{g}(2s+1-2t-k)\\
&=&2\sum_{k\in\Z}f(2s-2k)\bar{g}(2s+1-2t-2k)+\\
\nonumber&&\quad\quad 2\sum_{k\in\Z}f(2s-2k-1)\bar{g}(2s-2t-2k).
\end{eqnarray}
To complete the description of the automorphism $\Upsilon$ of $\K({L}^2([0,1]))\otimes
C(\Omega)\rtimes\Z$ corresponding to $\tilde{\alpha}$, we need to
introduce some further  notations.
We define the partial isometries $U_0,U_1$ and $V$ of ${L}^2([0,1])$
by 
\begin{itemize}
\item $U_0 f(t)=\sqrt{2}f(2t)$ if $t\in[0,1/2]$ and $U_0 f(t)=0$ otherwise;
\item $U_1 f(t)=\sqrt{2}f(2t-1)$ if $t\in[1/2,1]$ and $U_1
  f(t)=0$ otherwise;
 \item $V f(t)=f(t+1/2)$ if $t\in[0,1/2]$ and $V
  f(t)=0$ otherwise,
\end{itemize} for $f$ in $C([0,1])$.
Let use define also the endomorphisms $W_0$ and $W_1$ of the
$C^*$-algebra  $C(\Omega)$ 
by $W_0\phi(\omega)=\phi(2\omega)$ and
$W_1\phi(\omega)=\phi(2\omega+1)$, for $\phi$ in $C(\Omega)$ and
$\omega$ in $\Omega$.
Using this notations, equations (\ref{equ-th7}) and  (\ref{equ-th8}) can
be rewriten as
$$\tilde{\alpha}(\Theta^\Omega_{\xi,\xi'})([\omega,s],t)=\sum_{k\in\Z}U_0
f(s-k)W_0\bar{\phi}(\omega+k)U_0\bar{g}(s-t-k)+\sum_{k\in\Z}U_1
f(s-k)W_1\bar{\phi}(\omega+k)U_1\bar{g}(s-t-k)$$
and
$$\tilde{\alpha}(\Theta^\Omega_{\xi,\xi''})([\omega,s],t)=\sum_{k\in\Z}U_0
f(s-k)U_1\bar{g}(s-t-k+1)+\sum_{k\in\Z}U_1
f(s-k)U_0\bar{g}(s-t-k).$$ Thus, in view of equations (\ref{equ-th1})
and  (\ref{equ-th2}), we get that
$$\Upsilon(\Theta_{f,g}\otimes\phi)=\Theta_{U_0f,U_0g}\otimes
W_0\phi+\Theta_{U_1f,U_1g}\otimes W_1\phi$$ and
$$\Upsilon(\Theta_{f,g}\otimes u)=\Theta_{U_0f,U_1g}\otimes u
+\Theta_{U_1f,U_0g}\otimes 1.$$ From this we deduce
$$\Upsilon(k\otimes\phi)=U_0\cdot k\cdot U_0^*\otimes W_0\phi+U_1\cdot
k\cdot U_1^*\otimes W_1\phi$$
and 
\begin{eqnarray*}
\Upsilon(k\otimes u)&=& U_0\cdot k\cdot U_1^* \otimes u+U_1\cdot k\cdot U_0^* \otimes 1\\
&=& U_0\cdot k\cdot U_0^* \cdot V \otimes u+U_1\cdot k\cdot U_1^*\cdot
V^* \otimes 1\\
&=& (U_0\cdot k\cdot U_0^*+U_1\cdot k\cdot U_1^*)\cdot(V \otimes u+\cdot
V^* \otimes 1)
\end{eqnarray*}
where the second  equality holds since $V^*\cdot U_0=U_1$ and $V\cdot
U_1=U_0$ and the third holds since $V^*U_1=VU_0=0$.
In consequence, if we extends $\Upsilon$ to the multiplier algebra
of $\K({L}^2([0,1]))\otimes
C(\Omega)\rtimes\Z$, we finally obtain that the automorphism  $\Upsilon$ is the unique
homomorphism of $C^*$-algebra such that 
$$\Upsilon(k\otimes\phi)=U_0^*\cdot k\cdot U_0\otimes W_0\phi+U_1^*\cdot
k\cdot U_1\otimes W_1\phi$$ and
\begin{equation}\label{equ-u}\Upsilon(1\otimes u)=V\otimes u+V^*\otimes 1,\end{equation}
 where $k$ is in
$\K({L}^2([0,1]))$, $\phi$ is in $C(\Omega)$ and $1\otimes u$ and
$V\otimes u+V^*\otimes 1$ are viewed as multipliers of  $\K({L}^2([0,1]))\otimes
C(\Omega)\rtimes\Z$.

The following lemma will be helpful to compute the $K$-theory of the
$C^*$-algebra of the Penrose hyperbolic tiling. For short, we will
denote from now on  $\K({L}^2([0,1]))$ by  $\K$.
\begin{lemma}\label{lem-action-k1}
  Let ${A}$ be the unitarisation of $\K\otimes C(\Omega)\rtimes\Z$ and
let $f$ be a norm one function of ${L}^2([0,1])$. Then the
unitaries $$(1-\Theta_{f,f}\otimes 1)+\Theta_{f,f}\otimes u$$
 and  \begin{equation}\label{equ-action-k1}
\Theta_{U_0f,U_1f}\otimes u+\Theta_{U_1f,U_0f}\otimes
1+1-\Theta_{U_0f,U_0f}\otimes 1-\Theta_{U_1f,U_1f}\otimes 1
\end{equation}
of $A$  are homotopic.
\end{lemma}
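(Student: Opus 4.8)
The plan is to exhibit an explicit homotopy of unitaries in $A$, the unitarisation of $\K\otimes C(\Omega)\rtimes\Z$, connecting the two stated unitaries. Both unitaries have the form $1-p+p\cdot w$ where $p$ is a projection in $\K\otimes 1$ (or a compression to its range) and $w$ is a unitary in the multiplier algebra supported on the range of $p$. For the first unitary, $p=\Theta_{f,f}\otimes 1$ and $w$ restricts to $1\otimes u$; for the second, writing $p'=\Theta_{U_0f,U_0f}\otimes 1+\Theta_{U_1f,U_1f}\otimes 1$ and using $V^*U_0=U_1$, $VU_1=U_0$, $V^*U_1=VU_0=0$, one recognizes the second unitary as $1-p'+p'\cdot(V\otimes u+V^*\otimes 1)$, i.e. $\Upsilon$ applied (in the multiplier picture, as in equation~(\ref{equ-u})) to the first one. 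So the statement is precisely that $\Upsilon$ fixes the $K_1$-class of this particular generator of $K_1(\K\otimes C(\Omega)\rtimes\Z)$, realized at the level of unitaries by an honest homotopy.

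The key steps I would carry out, in order. First, I would record the identities among the partial isometries: $U_0^*U_0+U_1^*U_1 = 1$ (a Cuntz-type relation on $L^2([0,1])$, since $U_0,U_1$ are the two halves of the dyadic refinement), $U_0U_0^*$ and $U_1U_1^*$ are the orthogonal projections onto $L^2([0,1/2])$ and $L^2([1/2,1])$, and $V=U_1U_0^*$, $V^*=U_0U_1^*$, so that $V\otimes u+V^*\otimes 1$ is a partial isometry whose source and range projections both equal $U_0U_0^*+U_1U_1^*\otimes 1 = 1$; hence it is a unitary in the multiplier algebra. Second, since $\langle f,f\rangle=1$, $\Theta_{f,f}$ is a rank-one projection, and $U_0\Theta_{f,f}U_0^*=\Theta_{U_0f,U_0f}$, $U_1\Theta_{f,f}U_1^*=\Theta_{U_1f,U_1f}$ are rank-one projections with orthogonal ranges (as $U_0f$ lives in $L^2([0,1/2])$ and $U_1f$ in $L^2([1/2,1])$); their sum is $p'$ above. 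Third, I would use the path $f_s$, $s\in[0,1]$, where $f_0=f$ and $f_1$ is chosen with $\|f_1\|=1$, supported in $(0,1)$, so that the rank-one projection $\Theta_{f_s,f_s}\otimes 1$ gives a norm-continuous path of projections; conjugating $\Upsilon$ of the first unitary by the corresponding path of unitaries implementing the homotopy of projections reduces the problem to a fixed projection, at which point both unitaries are $1-q+qw$ and $1-q'+qw'$ for honest projections $q\le q'$... — instead, more cleanly, I would invoke that $\Upsilon$ is an endomorphism (the "Cuntz-algebra-style" doubling map) and that for such maps the induced action on $K_1$ of a crossed product by $\Z$ is computed by the naturality/rotation argument: the class $[1\otimes u]$ and $[\Upsilon(1\otimes u)]=[V\otimes u+V^*\otimes 1]$ differ by the class of a unitary in $\K\otimes C(\Omega)$ (the "$V^*\otimes 1$" correction), which is connected to the identity since $U_1U_0^*\oplus U_0U_1^*$, completed by the identity on the complement, is homotopic to $1$ in the unitary group of a stable algebra. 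I would make this last homotopy explicit using the standard rotation $\begin{pmatrix}\cos\theta & -\sin\theta\\ \sin\theta & \cos\theta\end{pmatrix}$ between $\begin{pmatrix}0&1\\1&0\end{pmatrix}$-type symmetries and the identity, transported through $U_0,U_1$.

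The main obstacle, I expect, is bookkeeping rather than conceptual: keeping track of where each rank-one operator lands after compression by $U_0,U_1$ and $V$, and verifying that the intermediate expressions remain unitaries in $A$ (not merely partial isometries) at every time of the homotopy — this is where the relations $V^*U_0=U_1$, $VU_1=U_0$, $V^*U_1=VU_0=0$ are used repeatedly, exactly as in the computation of $\Upsilon(k\otimes u)$ preceding the lemma. Once one sets up the homotopy $H_\theta = \cos\theta\,(\text{first unitary}) + \sin\theta\,(\text{correction term})$ normalized appropriately — or, equivalently, the two-step homotopy "rotate $1\otimes u$ into $V\otimes u+V^*\otimes 1$ inside the $2\times 2$ corner cut out by $U_0U_0^*,U_1U_1^*$" — the verification that it is a norm-continuous path of unitaries in $A$ joining the two given endpoints is a direct computation using only the listed relations and $\|f\|=1$.
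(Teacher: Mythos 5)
Your proposal is correct and is essentially the paper's argument: the paper writes the second unitary as a block matrix in the Hilbertian basis $U_0f_0,\ldots,U_0f_n,\ldots;U_1f_0,\ldots,U_1f_n,\ldots$ obtained from an orthonormal basis extending $f$, applies the standard rotation homotopy to move $u$ from the off-diagonal of the $2\times 2$ corner spanned by $U_0f$ and $U_1f$ onto the diagonal, and concludes by observing that all unitaries of the form $1-\Theta_{g,g}\otimes 1+\Theta_{g,g}\otimes u$ for unit vectors $g$ are homotopic. Your explicit rotation in the corner cut out by $\Theta_{U_0f,U_0f}+\Theta_{U_1f,U_1f}$, followed by connecting $\Theta_{U_0f,U_0f}$ to $\Theta_{f,f}$ through a path of rank-one projections, is the same computation in different notation.
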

\begin{proof}
If we set $f_0=f$ and complete to a  Hilbertian base
$f_0,\ldots,f_n,\ldots$ of ${L}^2([0,1])$, then
$U_0f_0,\ldots,U_0f_n,\ldots;U_1f_0,\ldots,U_1f_n,\ldots$
is a  Hilbertian basis of ${L}^2([0,1])$. In this base the unitary of
equation (\ref{equ-action-k1}) can be written down as
$$\left(\begin{array}{c|c}\begin{matrix} 0&&\\&1&\\&&\ddots \end{matrix}
&\begin{matrix} u&&\\&0&\\&&\ddots \end{matrix}\\
\hline
\begin{matrix} 1&&\\&0&\\&&\ddots \end{matrix}&\begin{matrix}
  0&&\\&1&\\&&\ddots \end{matrix}
\end{array}\right)$$ which is homotopic to $$\left(\begin{array}{c|c}\begin{matrix} u&&\\&1&\\&&\ddots \end{matrix}
&\begin{matrix} 0&&\\&0&\\&&\ddots \end{matrix}\\
\hline
\begin{matrix} 0&&\\&0&\\&&\ddots \end{matrix}&\begin{matrix}
  1&&\\&1&\\&&\ddots \end{matrix}
\end{array}\right).$$ All unitaries that can be writen down in such way in
some hilbertian basis of  ${L}^2([0,1])$ are homotopic and since this is
the case for
$1-\Theta_{f,f}\otimes 1+\Theta_{f,f}\otimes u$, we get the result.
\end{proof}

\section{The $C^*$-algebra of a Penrose hyperbolic tiling}\label{sec-c*-tiling}
Let us consider the semi-direct product groupoid
$\G=(X^\cN_{\mathcal{P}}\times Z_w)\rtimes\R$ corresponding to the
diagonal action of $\R$ on $X^\cN_{\mathcal{P}}\times Z_w$, by
translations on $X^\cN_{\mathcal{P}}$ and  trivial
on $Z_\omega$. 
Let us denote by $\lambda=(\lambda_{(\P',\omega)})_{(\P',\omega)\in
  X^\cN_{\mathcal{P}}\times Z_w}$ the Haar system provided by the left
  Haar mesure on $\R$. 
Let us define the groupoid automorphism
{${\alpha_w}:\G\to\G;\,(\P',w',t)\mapsto(R\cdot\P',\sigma(w'),2t)$}.
Then ${\alpha_w}$ preserves the Haar system $\lambda$ with  constant
density $\rho_{\alpha_w}=1/2$ and thus according to lemma \ref{rem-pr}
the suspension groupoid $\G_{\alpha_w}$ admits a Haar system
$\lambda_{\alpha_w}$. 
The semi-direct product groupoid $X^G_{\mathcal{P}(w)}\rtimes\R$,
where $\R$ acts on $X^G_{\mathcal{P}(w)}$ by translations, is equipped
with an action of $\R$ by automorphisms 
$\beta_t:X^G_{\mathcal{P}(w)}\rtimes\R\to X^G_{\mathcal{P}(w)}\rtimes\R;\,(\mathcal{\T},s)\mapsto
(2^{t}\cdot \mathcal{\T},2^{t}s)$ for any  $t$ in $\R$. The
automorphism $\beta_t$ preserves the Haar system with constant density
$2^{-t}$
and thus in view  of proposition \ref{prop-iso} induced a strongly
continuous action of $\R$ on the crossed product $C^*$-algebra
$C(X^G_{\mathcal{P}(w)})\rtimes\R$. 
\begin{lemma}\label{lem-psi}
Let $w$ be an element of $\{1,\ldots,r\}^\Z$. Then there is a unique
isomorphism of groupoids
$\Phi_w:\G_{\alpha_w}\longrightarrow
X^G_{\mathcal{P}(w)}\rtimes\R$ such that:
\begin{enumerate}
\item
  $\Phi_w([\mathcal{P}+x,w,y,0])=(\mathcal{P}(w)+x,y)$ for all $x$ and $y$ in $\R$;
\item $\Phi_w$ is equivariant with respect to the actions of
  $\R$;
\item $\Phi_{w,*}\lambda_{\alpha_w}$  is the Haar system on
  $X^G_{\mathcal{P}(w)}\rtimes\R$ provided by the Haar measure on
  $\R$.
\end{enumerate}
\end{lemma}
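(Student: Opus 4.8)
The plan is to write $\Phi_w$ down explicitly. On unit spaces it will be the conjugacy $\overline{\Phi}$ of Proposition~\ref{caracterisation} composed with the reparametrisation $t\mapsto 2^{t}$ that turns the additive suspension parameter into the multiplicative parameter of $\R^{*}_{+}$; on arrows it will be the same formula, twisted by the factor $2^{t}$ in the $\R$--coordinate of the semi-direct product $X^{G}_{\mathcal P(w)}\rtimes\R$. The point of the lemma is really just that these two twists are compatible with everything in sight.

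First I would fix the unit-space identification. Since $\cA_{(\alpha_w)_{X}}(\mathcal P',w',t)=(R\cdot\mathcal P',\sigma(w'),t-1)$ while $\cR(\mathcal P',w',s)=(R\cdot\mathcal P',\sigma(w'),s/2)$, the map $(\mathcal P',w',t)\mapsto(\mathcal P',w',2^{t})$ descends to a homeomorphism from $(X^{\cN}_{\mathcal P}\times Z_{w}\times\R)/\cA_{(\alpha_w)_{X}}$ onto $(X^{\cN}_{\mathcal P}\times Z_{w}\times\R^{*}_{+})/\cR$; composing with $\overline{\Phi}$ gives a homeomorphism $\Phi_{w,X}\colon X_{\alpha_w}\to X^{G}_{\mathcal P(w)}$, $[\mathcal P',w',t]\mapsto\overline{\Phi}([\mathcal P',w',2^{t}])$. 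From $\Phi(\mathcal P+x,w,1)=R_{1}(\mathcal P(w)+x)=\mathcal P(w)+x$ one reads off $\Phi_{w,X}([\mathcal P+x,w,0])=\mathcal P(w)+x$, and since $\overline{\Phi}$ is $G$-equivariant one gets $\Phi_{w,X}(s\cdot\xi)=2^{s}\cdot\Phi_{w,X}(\xi)$.

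Next I would set, for $\gamma=(\mathcal P',w',u)\in\G$ and $t\in\R$,
$$\Phi_w([\gamma,t])=\bigl(\Phi_{w,X}([\mathcal P',w',t]),\,2^{t}u\bigr)\in X^{G}_{\mathcal P(w)}\rtimes\R .$$
This descends to $\G_{\alpha_w}$ because $\alpha_w$ multiplies the $\R$--coordinate of $\G$ by $2$ while the suspension parameter drops by $1$ and $2^{t}u=2^{t-1}(2u)$, and it is a homeomorphism because $(u,t)\mapsto(2^{t}u,t)$ is one and $\Phi_{w,X}$ is one. I would then verify it is a groupoid homomorphism: $r_{\alpha_w}$ and $s_{\alpha_w}$ land on the range and source of $\Phi_w([\gamma,t])$, the only nontrivial identity being $(-2^{t}u)\cdot\overline{\Phi}([\mathcal P',w',2^{t}])=\overline{\Phi}([\mathcal P'-u,w',2^{t}])$, which holds on the dense set $\mathcal P'=\mathcal P+\tau$ via $\overline{\Phi}([\mathcal P+\tau,w',2^{t}])=R_{2^{t}}(\mathcal P(w')+\tau)$ and then everywhere by continuity, and composability and the product formula follow because the $\R$--coordinates simply add. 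Conditions (1) and (2) are exactly the two properties of $\Phi_{w,X}$ noted above, read off the formula. For (3), Lemma~\ref{rem-pr} gives $\lambda_{\alpha_w}^{[\mathcal P',w',t]}(f)=\int_{\R}\rho_{\alpha_w}^{-t}f([\mathcal P',w',u,t])\,du=2^{t}\int_{\R}f([\mathcal P',w',u,t])\,du$ because $\rho_{\alpha_w}\equiv 1/2$; pushing this forward by $\Phi_w$ and substituting $v=2^{t}u$ cancels the $2^{t}$ and leaves precisely the Haar system of $X^{G}_{\mathcal P(w)}\rtimes\R$ coming from Lebesgue measure on $\R$. Uniqueness holds since (1) and (2) already determine $\Phi_w$ on $\{[\mathcal P+x,w,y,s]\colon x,y,s\in\R\}$, which, using the relation $[\mathcal P+x,w,y,s]=[\alpha_w^{n}(\mathcal P+x,w,y),\,s-n]$ for $n\in\Z$ to bring in all the $\sigma^{n}(w)$, is dense in $\G_{\alpha_w}$ because $\{\mathcal P+x\}$ is dense in $X^{\cN}_{\mathcal P}$ and $\{\sigma^{n}(w)\}$ is dense in $Z_{w}$; a continuous groupoid homomorphism is determined by its values on a dense set.

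The only genuine difficulty is bookkeeping. Three quotients are simultaneously in play --- the odometer suspension $\cA_{o}$ inside $X^{\cN}_{\mathcal P}$, the relation $\cR$, and the groupoid suspension $\cA_{\alpha_w}$ --- each contributing a power of $2$, and the twist $2^{t}$ in the arrow formula is forced at once by ``$\Phi_w$ is a homomorphism'' and by the density $\rho_{\alpha_w}=1/2$ appearing in $\lambda_{\alpha_w}$; keeping all these exponents mutually consistent (in particular that the unit-space reparametrisation is $t\mapsto 2^{t}$, not $t\mapsto 2^{-t}$) is where care is needed, though there is no conceptual obstruction.
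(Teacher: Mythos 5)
Your proposal is correct and follows essentially the same route as the paper: the paper's map is $(\mathcal{T},w',x,y)\mapsto(R_{2^{y}}\mathcal{T}(w'),2^{y}x)$ with $\mathcal{T}(w')=\overline{\Phi}([\mathcal{T},w',1])$, which coincides with your formula $[\gamma,t]\mapsto(\overline{\Phi}([\mathcal{P}',w',2^{t}]),2^{t}u)$, and the descent, equivariance, Haar-system and density/uniqueness checks are the same. The only cosmetic difference is that you obtain bijectivity by factoring through the homeomorphism $\overline{\Phi}$ of Proposition \ref{caracterisation}, whereas the paper verifies injectivity and surjectivity directly; both are fine.
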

\begin{proof} With notations of the proof of proposition
  \ref{caracterisation}, let us define
  $\mathcal{T}(w')=\overline{\Phi}([T,w,1])$, were 
 $\mathcal{T}$ is in  $X^\cN_{\mathcal{P}}$  and $w'$ is 
in $\{1,...,r\}^\Z$.  Then the map  $$X^\cN_{\mathcal{P}}\times
Z_w\to X^G_{\mathcal{P}(w)};\,(\mathcal{T},w')\mapsto\mathcal{T}(w')$$
is continuous and  since
$(R \cdot \mathcal{T})(\sigma(w'))= R \cdot \mathcal{T}(w')$,
 the continuous map 
$$\G\times\R\to
X^G_{\mathcal{P}(w)}\rtimes\R;\,
(\mathcal{T},w',x,y)\mapsto (R_{2^{y}}\mathcal{T}(w'), 2^{y}x)$$
induces a  continuous homomorphism  of groupoids 
$$\Phi_w:\G_{\alpha_w}\to
X^G_{\mathcal{P}(w)}\rtimes\R.$$
This map is clearly one-to-one  since the equality
{$R_{2^t}\mathcal{T}(w')=\mathcal{T'}(w'')$} for $t$ in $\R$,
$\mathcal{T}$ and $\mathcal{T'}$  in $X^\cN_{\mathcal{P}}$ and $w''$ and $w'$ in
$Z_w$ holds if and only if $t$ is integer, 
$w''=\sigma^t(w')$ and {$R_{2^t} \mathcal{T}=\mathcal{T'}$}. To
prove that $\Phi_w$ is onto, let us remark that any element of 
$X^G_{\mathcal{P}(w)}$ can be written as {$R_{2^a}\mathcal{T}(w')$}, with
$a$ in $\R$, $\mathcal{T}$ in
$X^\cN_{\mathcal{P}}$ and $w'$ in $Z_\omega$. We get then  $$\Phi_w([\mathcal{T},w'
,2^{-a}t,a])=(R_{2^{a}}\mathcal{T}(w'),t)$$ for all $t$ in $\R$.

It is then straightforward to check that  condition (3) of the
lemma is satisfied. The uniqueness of $\Phi_w$ is a consequence
 on one hand of its equivariance  and on the other hand of the density
 of the $\R$-orbit 
of $\P$ in $X^\cN_{\mathcal{P}}$.
\end{proof}
As a consequence of lemma \ref{lem-psi}, we get
\begin{corollary}
The map $$C_c( X^G_{\mathcal{P}(w)}\rtimes\R)\to C_c(\G_{\alpha_w});\,f\mapsto f\circ \Phi_w$$ induces
  an $\R$-equivariant isomorphism
  $$\widetilde{\Phi}_w:C_0(X^G_{\mathcal{P}(w)})\rtimes\R
\to C^*_r(\G_{\alpha_w},\lambda_{\alpha_w}).$$
\end{corollary}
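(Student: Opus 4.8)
The plan is to deduce the corollary from Lemma~\ref{lem-psi} essentially by transport of structure. The isomorphism of groupoids $\Phi_w:\G_{\alpha_w}\to X^G_{\mathcal{P}(w)}\rtimes\R$ is a homeomorphism, and by condition (3) it carries the Haar system $\lambda_{\alpha_w}$ to the Haar system on $X^G_{\mathcal{P}(w)}\rtimes\R$ coming from Lebesgue measure on $\R$. Hence the pullback map $f\mapsto f\circ\Phi_w$ takes $C_c(X^G_{\mathcal{P}(w)}\rtimes\R)$ bijectively onto $C_c(\G_{\alpha_w})$, and a direct check shows it is a $*$-homomorphism for the convolution products: since $\Phi_w$ preserves range, source, multiplication and inversion, and since $\Phi_{w,*}\lambda_{\alpha_w}$ equals the target Haar system, the convolution integral $\int f(\gamma')g(\gamma'^{-1}\gamma)\,d\lambda^{r(\gamma)}(\gamma')$ on one side matches the corresponding integral on the other after substitution $\gamma'\mapsto\Phi_w(\gamma')$, and likewise for the involution $f^*(\gamma)=\overline{f(\gamma^{-1})}$.

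First I would observe that a groupoid isomorphism that intertwines the two Haar systems induces an isometry between the two $C_0(X)$-Hilbert modules $\L^2(\G_{\alpha_w},\lambda_{\alpha_w})$ and $\L^2(X^G_{\mathcal{P}(w)}\rtimes\R,\lambda)$ built in Section~\ref{subsec-suspension}. Indeed, pullback along $\Phi_w$ takes $C_c$ of one groupoid to $C_c$ of the other, and the $C_0(X)$-valued inner products correspond exactly because the base-space homeomorphism induced by $\Phi_w$ intertwines the $C_0$-structures and the fiber measures $\lambda_x$ are preserved. Consequently the pullback $*$-isomorphism on the convolution algebras extends to the reduced $C^*$-completions: it is implemented by a unitary of Hilbert modules conjugating one copy of $C_c$ acting by convolution onto the other, so it is automatically isometric for the reduced norms and extends to an isomorphism $\widetilde{\Phi}_w$ of the reduced $C^*$-algebras. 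Combining this with the $\R$-equivariance from condition (2) — which at the level of $C_c$ is immediate and passes to the completions by continuity — gives the $\R$-equivariant isomorphism claimed. Finally I would identify $C^*_r(X^G_{\mathcal{P}(w)}\rtimes\R,\lambda)$ with $C_0(X^G_{\mathcal{P}(w)})\rtimes\R$ using the example in Section~\ref{subsec-suspension} that the reduced $C^*$-algebra of a transformation groupoid with Haar system from the group's Haar measure is the reduced crossed product.

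The only genuinely technical point — and the step I expect to need the most care — is verifying that the pullback map is multiplicative with the correct Haar-measure bookkeeping, i.e.\ that no Jacobian factor intervenes. This is exactly what condition (3) of Lemma~\ref{lem-psi} guarantees: because $\Phi_{w,*}\lambda_{\alpha_w}$ is the standard Haar system (not merely an equivalent one), the change of variables in the convolution integral is measure-preserving and $f\mapsto f\circ\Phi_w$ is a genuine $*$-isomorphism of convolution algebras rather than a twisted one. Everything else is routine: the extension from $C_c$ to the reduced $C^*$-algebra is the standard fact that a $*$-isomorphism of convolution algebras intertwining the regular representations is isometric, and $\R$-equivariance is inherited from the equivariance of $\Phi_w$ by density and continuity.
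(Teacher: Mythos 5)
Your proposal is correct and follows exactly the route the paper intends: the paper states this corollary without proof as an immediate consequence of Lemma \ref{lem-psi}, since a groupoid isomorphism carrying one Haar system exactly onto the other (condition (3)) induces, by pullback, a $*$-isomorphism of the convolution algebras with no Radon--Nikodym twist, which extends to the reduced completions via the induced unitary of the underlying Hilbert modules, with $\R$-equivariance inherited from condition (2) and the identification $C^*_r(X^G_{\mathcal{P}(w)}\rtimes\R,\lambda)\cong C_0(X^G_{\mathcal{P}(w)})\rtimes\R$ supplied by the transformation-groupoid example of Section \ref{subsec-suspension}. Your explicit attention to the absence of a Jacobian factor is precisely the point that makes the pullback an untwisted isomorphism, in contrast to the density-corrected maps of Lemma \ref{lem-mt}.
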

\begin{proposition}\label{prop-morita}
Using the notations of lemmas  \ref{lem-mt} and \ref {lem-psi}, the $C^*$-algebras
$C(X^G_{\mathcal{P}(w)})\rtimes G$ and
$C_r^*(\G,\lambda)\rtimes_{\tilde{\alpha_w}}\Z$   are Morita equivalent.
\end{proposition}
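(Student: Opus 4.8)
The plan is to peel the two one-parameter subgroups of the affine group $G$ off one at a time, so that the proposition becomes an instance of the mapping torus Morita equivalence recalled at the end of Section~\ref{subsec-suspension} (namely that $A_\beta\rtimes_{\widehat\beta}\R$ is Morita equivalent to $A\rtimes_\beta\Z$). First I would write $G=\cN\rtimes\R^*_+$, with $\cN=\{z\mapsto z+b\}$ the normal subgroup and $\R^*_+$ acting by conjugation, $a(z\mapsto z+b)a^{-1}=(z\mapsto z+ab)$, and identify $\R^*_+$ with $\R$ through $t\mapsto 2^t$. The standard description of a crossed product by a semi-direct product of groups then provides an isomorphism
$$C(X^G_{\mathcal{P}(w)})\rtimes G\;\cong\;\bigl(C(X^G_{\mathcal{P}(w)})\rtimes\cN\bigr)\rtimes\R,$$
and I would check that the resulting $\R$-action on $C(X^G_{\mathcal{P}(w)})\rtimes\cN=C(X^G_{\mathcal{P}(w)})\rtimes\R$ is exactly the strongly continuous action $\beta$ already attached to the groupoid automorphisms $\beta_t$ of $X^G_{\mathcal{P}(w)}\rtimes\R$ via Lemma~\ref{lem-mt}; in particular the density factor $2^{-t}$ coming out of Lemma~\ref{lem-mt} has to match the rescaling of Haar measure on $\cN$ produced by conjugation by $2^t$.

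Next I would string together the isomorphisms already in hand. The corollary following Lemma~\ref{lem-psi} gives an $\R$-equivariant isomorphism $\widetilde{\Phi}_w:C(X^G_{\mathcal{P}(w)})\rtimes\R\to C^*_r(\G_{\alpha_w},\lambda_{\alpha_w})$, where the source carries the action $\beta$ and the target the suspension action $\tilde\theta$ of Remark~\ref{rem-action}, the equivariance being the one recorded in Lemma~\ref{lem-psi}(2) (one checks that $\Phi_w$ intertwines $\theta_s$ with $\beta_s$). Since $\rho_{\alpha_w}=1/2$ is constant, Proposition~\ref{prop-iso} applies, and together with Remark~\ref{rem-action} it yields an $\R$-equivariant isomorphism $\Lambda_{\alpha_w}:C^*_r(\G_{\alpha_w},\lambda_{\alpha_w})\to C^*_r(\G,\lambda)_{\tilde{\alpha_w}}$ onto the mapping torus, carrying $\tilde\theta$ to the mapping torus action $\widehat{\tilde{\alpha_w}}$. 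Composing, I obtain an isomorphism of $C^*$-algebras
$$C(X^G_{\mathcal{P}(w)})\rtimes G\;\cong\;C^*_r(\G,\lambda)_{\tilde{\alpha_w}}\rtimes_{\widehat{\tilde{\alpha_w}}}\R.$$

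Finally I would apply the mapping torus Morita equivalence of Section~\ref{subsec-suspension} with $A=C^*_r(\G,\lambda)$ and the automorphism $\tilde{\alpha_w}$ of Lemma~\ref{lem-mt}: the right-hand side above is then Morita equivalent to $C^*_r(\G,\lambda)\rtimes_{\tilde{\alpha_w}}\Z$, which is the asserted statement. I expect the real work to be confined to the first step, namely to checking that the action arising from the decomposition $G=\cN\rtimes\R^*_+$ is literally the action $\beta$ built from the $\beta_t$, with all normalizations and the modular density in agreement; granting this, the proof is just the composition of the previously constructed ($\R$-equivariant) isomorphisms with the mapping torus Morita equivalence.
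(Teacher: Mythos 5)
Your proposal is correct and follows essentially the same route as the paper's proof: decompose $G\cong\R\rtimes\R^*_+$, identify $\R^*_+$ with $\R$ via $t\mapsto 2^t$ so that the iterated crossed product carries the action of Lemma~\ref{lem-psi}, pass through Proposition~\ref{prop-iso} and Remark~\ref{rem-action} to reach $C_r^*(\G,\lambda)_{\tilde{\alpha_w}}\rtimes\R$, and conclude by the mapping torus Morita equivalence. Your extra remark that the density $2^{-t}$ from Lemma~\ref{lem-mt} must match the modular rescaling of the Haar measure on $\cN$ is a point the paper leaves implicit, but it is the right check to make.
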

\begin{proof}
Recall that $G=\R\rtimes\R^*_+$, where the group $(\R^*_+,\cdot)$ acts on
$(\R,+)$ by multiplication. Iterate   crossed products  leads to
an isomorphism $$C(X^G_{\mathcal{P}(w)})\rtimes G\cong
(C(X^G_{\mathcal{P}(w)})\rtimes\R)\rtimes\R^*_+.$$If we identify the
groups $(\R,+)$ and $(\R^*_+,\cdot)$ using the isomorphism
$$\R\to\R^*_+;\,t\mapsto 2^{t},$$ this provides  the action  under consideration
in lemma \ref{lem-psi} of $\R$ on 
$C(X^G_{\mathcal{P}(w)})\rtimes\R$
and 
hence, the algebras 
$C(X^G_{\mathcal{P}(w)})\rtimes G$ and
$C^*(\G_{\alpha_w},\lambda_{\alpha_w})\rtimes\R$ are isomorphic. In
view of lemma \ref{prop-iso} and of remark \ref{rem-action}, the $C^*$-algebra
$C(X^G_{\mathcal{P}(w)})\rtimes G$ is isomorphic to
$C_r^*(\G,\lambda)_{\tilde{\alpha_w}}\rtimes\R$. But since
$C_r^*(\G,\lambda)_{\tilde{\alpha_w}}$ is the mapping torus algebra with
respect to the automorphism $\tilde{\alpha_w}:C_r^*(\G,\lambda)\to C_r^*(\G,\lambda)$, the crossed product $C^*$-algebra
$C_r^*(\G,\lambda)_{\tilde{\alpha_w}}\rtimes\R$ is Morita equivalent to
$C_r^*(\G,\lambda)\rtimes_{\tilde{\alpha_w}}\Z$  and hence we get
the result.
\end{proof}

\section{The $K$-theory of the $C^*$-algebra of a Penrose hyperbolic
  tiling} \label{section_K_th}

Let us consider the semi-direct groupoid $\G=(X^\cN_{\mathcal{P}}\times Z_w)\rtimes\R$ corresponding to the
diagonal action of $\R$ on $X^\cN_{\mathcal{P}}\times Z_w$, by
translations on $X^\cN_{\mathcal{P}}$ and  trivial
on $Z_w$. 
According to proposition \ref{prop-morita} we have an isomorphism
$$K_*(C(X^G_{\mathcal{P}(w)})\rtimes G)\stackrel{\cong}{\to}K_*(
C_r^*(\G,\lambda)\rtimes_{\tilde{\alpha_w}}\Z)$$ induced by the
Morita equivalence. In order to compute this $K$-theory group,  we
need to recall some basic facts concerning the $K$-theory group of a
crossed product of a $C^*$-algebra $A$ by an action of $\Z$ provided
by an automorphism $\theta$ of $A$. This $K$-theory can be computed
by using the Pimsner-Voiculescu exact sequence \cite{pv}
$$\begin{CD}
K_0(A)@>\theta_*-Id>>K_0(A)@>\iota_*>>K_0(A\rtimes_\theta\Z)\\
@AAA @.  @VVV\\
K_1(A\rtimes_\theta\Z)@<\iota_*<<K_1(A)@<\theta_*-Id<<K_1(A)
\end{CD},$$ where $\iota_*$ is the homomorphism induced in $K$-theory by
the inclusion $\iota:A\hookrightarrow A\rtimes_\theta\Z$ and $\theta_*$
is the homomorphism in $K$-theory induced by $\theta$.
The vertical maps  are given by
the composition 
$$K_*(A\rtimes_\theta\Z)\stackrel{\cong}{\lto} K_*(A_\theta\rtimes_{\widehat{\theta}}\R)\stackrel{\cong}{\lto} 
K_{*+1}(A_\theta)
\stackrel{{ev}_*}{\lto}K_{*+1}(A),$$
where
\begin{itemize}
\item $A_\theta$ is the mapping torus of $A$ with respect to the
  action $\theta$ endowed, with its  associated action
  $\widehat{\theta}$ of
  $\R$;
\item the first map is induced by the Morita equivalence between
  $A\rtimes_\theta\Z$ and $A_\theta\rtimes_{\widehat{\theta}}\R$;
\item the second map is the Thom-Connes isomorphism;
\item the third map is induced in K-theory by the evaluation map
$${ev}:A_\theta\to A;\, f\mapsto f(0).$$
\end{itemize}
  For an automorphism $\Psi$ of an abelian group $M$, let us define  
 $\inv M$  as the set of invariant elements of $M$ and by $\co M=M/\{x-\Psi(x),x\in M\}$
 the set of coinvariant  elements. 
We then get   short exact sequences
\begin{equation}\label{equ-pv0}
0\to \co K_0(A)\to K_0(A\rtimes_\theta\Z) \to \inv K_1(A)\to 0
\end{equation}
and 
\begin{equation}\label{equ-pv1}
0\to \co K_1(A)\to K_1(A\rtimes_\theta\Z) \to \inv K_0(A)\to 0.
\end{equation}
 Moreover the inclusions in these exact
sequences are induced by $\iota_*$.
The first step in the computation of  $K_*(
C_r^*(\G,\lambda)\rtimes_{\tilde{\alpha_w}}\Z)$ is provided by
next  lemma, which is straightforward to prove.
\begin{lemma}\label{lem-cantor}
Let $Z$ be a Cantor set and let us denote by $C(Z,\Z)$ the algebra of
continuous and integer valued functions on $Z$. 
\begin{enumerate}
\item we have an isomorphism $C(Z,\Z)\to K_0(C(Z));\,\chi_E\mapsto
  [\chi_E]$.
\item $K_1(C(Z))=\{0\}$,
\end{enumerate}
where for a  compact-open
subset $E$ of $Z$,  then $\chi_E$ stands for the characteristic function of
$E$.
\end{lemma}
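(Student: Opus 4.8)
The statement to prove is Lemma~\ref{lem-cantor}, which asserts that for a Cantor set $Z$ the map $C(Z,\Z)\to K_0(C(Z))$ sending a characteristic function $\chi_E$ of a compact-open set to its class is an isomorphism, and that $K_1(C(Z))=\{0\}$. This is the plan.

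\textbf{Reduction via a presentation of $Z$ as an inverse limit.} The first step is to write the Cantor set $Z$ as an inverse limit of finite discrete spaces, $Z=\varprojlim_n Z_n$ with each $Z_n$ a finite set and the bonding maps surjective; equivalently one fixes a decreasing sequence of finite clopen partitions of $Z$ refining to the topology. On the level of $C^*$-algebras this gives $C(Z)=\varinjlim_n C(Z_n)$, a direct limit of finite-dimensional commutative algebras $C(Z_n)\cong\C^{|Z_n|}$, with unital connecting $*$-homomorphisms. Since $K$-theory is continuous (commutes with direct limits of $C^*$-algebras) and $K_1(\C^m)=0$ for every $m$, we immediately get $K_1(C(Z))=\varinjlim_n K_1(C(Z_n))=0$, which is part (2). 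For part (1), the same continuity gives $K_0(C(Z))=\varinjlim_n K_0(C(Z_n))=\varinjlim_n \Z^{|Z_n|}$, where the connecting maps on $\Z^{|Z_n|}$ are the integer matrices recording how points of $Z_{n+1}$ map to points of $Z_n$ (each a $0$--$1$ matrix with a single $1$ per relevant column, reflecting that the partition at level $n+1$ refines that at level $n$).

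\textbf{Identifying the limit with $C(Z,\Z)$.} In parallel, observe that $C(Z,\Z)$ is itself the direct limit $\varinjlim_n C(Z_n,\Z)=\varinjlim_n \Z^{|Z_n|}$ along the same inclusion maps, because every continuous integer-valued function on the compact totally disconnected space $Z$ is locally constant, hence factors through some finite quotient $Z_n$. The assignment $\chi_E\mapsto[\chi_E]$ (for $E$ compact-open) is compatible with these two limit descriptions: at each finite stage $n$ it is exactly the standard isomorphism $C(Z_n,\Z)=\Z^{|Z_n|}\xrightarrow{\ \cong\ }K_0(C(Z_n))$ sending a basis minimal idempotent to the class of the corresponding rank-one projection, and this is natural with respect to the unital embeddings $C(Z_n)\hookrightarrow C(Z_{n+1})$. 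Passing to the limit yields the desired isomorphism $C(Z,\Z)\xrightarrow{\ \cong\ }K_0(C(Z))$. One should note that every compact-open $E\subseteq Z$ is, for $n$ large enough, a union of atoms of the level-$n$ partition, so $\chi_E$ lies in $C(Z_n,\Z)$ and the formula makes sense; and that these $\chi_E$ generate $C(Z,\Z)$ as a group, so surjectivity and the explicit form of the map are consistent.

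\textbf{Main obstacle.} There is no deep obstacle here — this is genuinely ``straightforward to prove,'' as the authors say. The only points requiring a little care are (a) verifying that the standard identification $K_0(\C^m)\cong\Z^m$ is natural under unital $*$-homomorphisms between finite-dimensional commutative algebras, so that the inductive systems match up on the nose rather than merely abstractly; and (b) checking well-definedness of $\chi_E\mapsto[\chi_E]$, i.e.\ that it does not depend on which level $n$ one uses to express $E$ as a union of partition atoms, which follows since the embeddings $K_0(C(Z_n))\to K_0(C(Z_{n+1}))$ send $[\chi_E]$ to $[\chi_E]$ by construction. Alternatively, one can bypass the inductive limit entirely and argue directly: $K_1$ vanishes because the unitary group of $C(Z)$ is connected (every unitary $Z\to\T=\mathbb{S}^1$ is homotopic to a constant, as $Z$ is totally disconnected so the winding obstruction $H^1(Z,\Z)$ vanishes — more simply, approximate by a locally constant unitary and rotate each value to $1$), and any projection in $M_k(C(Z))$ is, by a small clopen-partition approximation, unitarily equivalent to a diagonal projection with locally constant entries, whence its class is a $\Z$-combination of $[\chi_E]$'s; injectivity follows from the rank (trace-at-a-point) invariants $K_0(C(Z))\to\Z$, one for each point, which together detect $C(Z,\Z)$.
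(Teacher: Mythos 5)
Your proof is correct; the paper itself gives no argument for this lemma, merely declaring it ``straightforward to prove,'' and your inductive-limit argument (writing $Z=\varprojlim Z_n$, using continuity of $K$-theory, and matching the system $\varinjlim \Z^{|Z_n|}$ with both $C(Z,\Z)$ and $K_0(C(Z))$) is the standard proof the authors presumably had in mind. Both your main route and the alternative direct argument at the end are sound.
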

Plugging $C_r^*(\G,\lambda)\rtimes_{\tilde{\alpha_w}}\Z$ into the
short exact sequences (\ref{equ-pv0}) and  (\ref{equ-pv1}), we get 
\begin{equation}\label{eq-exsq1}
0\to \co K_0(C_r^*(\G,\lambda))\to K_0(C_r^*(\G,\lambda)\rtimes_{\tilde{\alpha_w}}\Z) \to \inv K_1(C_r^*(\G,\lambda))\to 0
\end{equation}
and 
\begin{equation}\label{eq-exsq2}
0\to \co K_1(C_r^*(\G,\lambda))\to K_1(C_r^*(\G,\lambda)\rtimes_{\tilde{\alpha_w}}\Z) \to \inv K_0(C_r^*(\G,\lambda))\to 0.
\end{equation}
According to equation  (\ref{equ-Morita}), the $C^*$-algebra
$C_r^*(\G,\lambda)$  is isomorphic to $C(Z_w)\otimes\K\otimes
C(\Omega)\rtimes\Z$. The $K$-theory of $C_r^*(\G,\lambda)$ can be
the computed by using the  K\"unneth formula: in view of lemma
\ref{lem-cantor}, $K_0(C(Z_w))\cong C(Z_w,\Z)$ is torsion
free and $K_1(C(Z_w))=\{0\}$ and by Morita equivalence, we get that
$$K_0(C_r^*(\G,\lambda)) \cong C(Z_w,\Z)\otimes
K_0(C(\Omega)\rtimes\Z)$$ and
$$K_1(C_r^*(\G,\lambda)) \cong C(Z_w,\Z)\otimes
K_1(C(\Omega)\rtimes\Z).$$ This isomorphism, up to the Morita
equivalence and to the isomorphism of equation (\ref{equ-Morita}) are implemented
by the external product in $K$-theory and will be precisely described
later on. Once again, $K_*(C(\Omega)\rtimes\Z)$ can be computed from
the short exact sequences (\ref{equ-pv0}) and (\ref{equ-pv1}), and we get, using
lemma \ref{lem-cantor} that
\begin{equation}\label{equ-mt1}
K_0(C(\Omega)\rtimes\Z)\cong\co C(\Omega,\Z)
\end{equation}
and
\begin{equation}\label{equ-mt2}
K_1(C(\Omega)\rtimes\Z)\cong\inv C(\Omega,\Z)\cong\Z.
\end{equation}
The isomorphism of equation (\ref{equ-mt1}) is induced by the composition
$$C(\Omega,Z)\stackrel{\cong}{\to}K_0(C(\Omega))\to
K_0(C(\Omega)\rtimes\Z),$$
which factorizes through $\co
C(\Omega,Z)$, where the first map is described in lemma
\ref{lem-cantor}, and the second map is induced on $K$-theory by the
inclusion $C(\Omega)\hookrightarrow C(\Omega)\rtimes\Z$.
In the  first isomorphism of equation (\ref{equ-mt2}) the class of $[u]$
in $K_1(C(\Omega)\rtimes\Z)$ of the unitary $u$ of
$C(\Omega)\rtimes\Z$ corresponding to the positive generator of $\Z$
is mapped to the constant function $1$ of $C(\Omega,\Z)$.
\begin{lemma}\label{lem-coinv}
Let $\nu$ be the Haar measure on $\Omega$. Then
\begin{enumerate}
\item $\int fd\nu$ is in $\Z[1/2]$ for all $f$ in $C(\Omega,\Z)$;
\item $C(\Omega,\Z)\to \Z[1/2];f\mapsto\int fd\nu$ factorizes through
  an isomorphism 
$$\co C(\Omega,\Z)\stackrel{\cong}{\to}\Z[1/2].$$
\end{enumerate}
\end{lemma}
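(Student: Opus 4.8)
The plan is to identify $\Omega$ with the group of dyadic integers and to work with the fact that $C(\Omega,\Z)$ is exactly the group of locally constant $\Z$-valued functions on $\Omega$. Since the cosets of the subgroups $2^N\Omega$ (closures of $2^N\Z$, for $N\in\N$) form a basis of clopen sets of $\Omega$, every $f\in C(\Omega,\Z)$ is constant on the cosets of $2^N\Omega$ for $N$ large enough, i.e. $f=\bar f\circ p_N$ where $p_N\colon\Omega\to\Omega/2^N\Omega\cong\Z/2^N\Z$ is the quotient map and $\bar f\colon\Z/2^N\Z\to\Z$. As $\nu(a+2^N\Omega)=2^{-N}$ for every $a$, we get $\int f\,d\nu=2^{-N}\sum_{x\in\Z/2^N\Z}\bar f(x)\in\Z[1/2]$; this proves point (1).

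For point (2) I write $T$ for the automorphism of $C(\Omega,\Z)$ induced by the odometer $o$, so that $\co C(\Omega,\Z)=C(\Omega,\Z)/(1-T)C(\Omega,\Z)$ (this quotient does not change if $o$ is replaced by $o^{-1}$, since $1-T^{-1}=-T^{-1}(1-T)$ and $T$ is invertible). First I would note that $f\mapsto\int f\,d\nu$ annihilates $(1-T)C(\Omega,\Z)$: this is immediate from the $o$-invariance of the Haar measure $\nu$, namely $\int(g-Tg)\,d\nu=0$. So the map factors through $\bar\Phi\colon\co C(\Omega,\Z)\to\Z[1/2]$, and $\bar\Phi$ is onto because $\int\chi_{2^N\Omega}\,d\nu=2^{-N}$ and the $2^{-N}$ generate $\Z[1/2]$.

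The substance of the lemma is the injectivity of $\bar\Phi$, i.e. the inclusion $\ker\big(f\mapsto\int f\,d\nu\big)\subseteq(1-T)C(\Omega,\Z)$. I would reduce it to a finite cyclic group: given $f$ with $\int f\,d\nu=0$, pick $N$ with $f=\bar f\circ p_N$, so that $\sum_{x\in\Z/2^N\Z}\bar f(x)=0$; it then suffices to find $\tilde g\colon\Z/2^N\Z\to\Z$ with $\tilde g-\tilde g\circ\sigma^{-1}=\bar f$, where $\sigma\colon x\mapsto x+1$ generates $\Z/2^N\Z$, since pulling back along $p_N$ (which intertwines $o$ and $\sigma$) yields $h=\tilde g\circ p_N$ with $h-Th=f$. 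The discrete primitive $\tilde g(k)=\sum_{j=1}^{k}\bar f(j)$, $0\le k\le 2^N-1$, works, the vanishing of the total sum being precisely what makes $\tilde g$ well defined on $\Z/2^N\Z$. An alternative, more structural route that proves all of (2) at once is to use $C(\Omega,\Z)=\varinjlim_N C(\Z/2^N\Z,\Z)$ with pullback transition maps, the fact that $\co$ (being a cokernel) commutes with the direct limit, the identification $\co C(\Z/2^N\Z,\Z)\cong\Z$ by the total-sum (augmentation) map, and the observation that the transition map $\Z\to\Z$ is multiplication by $2$ since pulling a function back to a double cover doubles its total sum; this gives $\co C(\Omega,\Z)\cong\varinjlim(\Z\xrightarrow{\times2}\Z\xrightarrow{\times2}\cdots)\cong\Z[1/2]$, and comparing with $2^{-N}\sum\bar f$ on each stage shows this isomorphism is the one induced by $\int\cdot\,d\nu$.

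I expect the only non-routine point to be the injectivity step; but once the reduction to $\Z/2^N\Z$ is set up it amounts to the elementary fact that a $\Z$-valued function of total sum zero on a finite cyclic group is a coboundary for the cyclic shift. The rest — factoring through finite quotients, invariance of the Haar measure, and the measures of balls — is bookkeeping.
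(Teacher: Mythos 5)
Your proof is correct. Point (1) is handled exactly as in the paper (reduce to the basic clopen sets $F_{n,k}=2^n\Omega+k$, each of Haar measure $2^{-n}$). For point (2) your route differs from the paper's in how injectivity is established: the paper constructs an explicit cross-section $\Z[1/2]\to\co C(\Omega,\Z)$, $2^{-n}\mapsto[\chi_{F_{n,0}}]$, checks it is well defined using $F_{n,0}=F_{n+1,0}\sqcup(2^n+F_{n+1,0})$, and then verifies it is a two-sided inverse on the generators $\chi_{F_{n,k}}$ via the identity $[\chi_{F_{n,k}}]=[\chi_{F_{n,0}}]$ in the coinvariants; you instead compute the kernel of $f\mapsto\int f\,d\nu$ head-on, reducing to the elementary fact that a $\Z$-valued function of total sum zero on $\Z/2^N\Z$ is a coboundary for the cyclic shift, with the discrete primitive $\tilde g(k)=\sum_{j=1}^{k}\bar f(j)$ written out (and your check that $k=0$ works precisely because the total sum vanishes is the crux). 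The two arguments rest on the same two relations --- translates are identified in the coinvariants, and $[\chi_{F_{n,0}}]=2[\chi_{F_{n+1,0}}]$ --- but yours exhibits an explicit primitive for every integral-zero function, which is slightly more work and slightly more information, while the paper's section argument avoids producing any primitive. Your direct-limit variant ($\co$ is a cokernel so commutes with $\varinjlim$, each stage $C(\Z/2^N\Z,\Z)$ contributes $\Z$ via the augmentation, and the transition maps become multiplication by $2$) is a clean repackaging of the same computation. One convention point you handled correctly and the paper leaves implicit: whether coinvariants are taken with respect to $T$ or $T^{-1}$ is immaterial, since $1-T^{-1}=-T^{-1}(1-T)$.
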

\begin{proof}It is enought to check the first point for characteristic
  function of compact-open subset of $\Omega$.
For  an integer $n$ and $k$ in $\{0,\ldots,2^{n-1}\}$, we set
$F_{n,k}=2^n\Omega+k$. Then $(F_{n,k})_{n\in\N,\,0\leq k\leq 2^{n-1}}$
is a basis of compact-open neighborhoods for $\Omega$ and thereby,
every compact-open subset of $\Omega$ is a finite disjoint union of some
$F_{n,k}$. Since $\nu(F_{n,k})=2^{-n}$, we get the first point.

The measure $\mu$ being invariant by translation, the map
 $$C(\Omega,\Z)\to \Z[1/2];f\mapsto\int fd\nu$$ factorizes through a
 group 
 homomorphism $\co C(\Omega,\Z)\to \Z[1/2]$. This homomorphism admits  a
 cross-section
\begin{equation}\label{equ-cross-sec}
\Z[1/2]\to \co C(\Omega,\Z);\, 2^{-n}\mapsto [\chi_{F_{n,0}}].
\end{equation}
This map is well defined since $F_{n,0}=F_{n+1,0}\coprod (2^n+F_{n+1,0})$
and thus
$$[\chi_{F_{n,0}}]=[\chi_{F_{n+1,0}}]+[\chi_{2^n+F_{n+1,0}}]=2[\chi_{F_{n+1,0}}]$$
in $\co C(\Omega,\Z)$. Since the
 $(\chi_{F_{n,k}})_{n\in\N,\,0\leq k\leq 2^{n-1}}$ generates
 $C(\Omega,\Z)$ as an abelian  group, it is enought to check that the
 cross-section of equation (\ref{equ-cross-sec}) is a left
inverse on  $\chi_{F_{n,k}}$, which is true since $[\chi_{F_{n,k}}]=
[\chi_{k+F_{n,0}}]=[\chi_{F_{n,0}}]$ in $\co C(\Omega,\Z)$.
\end{proof}
\begin{proposition}\label{prop-Rhull}
Let $C(Z_w,\Z[1/2])\cong C(Z_w,\Z)\otimes\Z[1/2]$ be the
algebra of continuous function on $Z_w$, valued in $\Z[1/2]$
(equipped with the discrete topology).
Then with the notations of the proof of lemma \ref{lem-coinv}, we have
isomorphisms
\begin{enumerate}
\item \begin{eqnarray*}
C(Z_w,\Z[1/2])&\stackrel{\cong}{\longrightarrow}&K_0(C(Z_w)\otimes
C(\Omega)\rtimes\Z)\\
\frac{\chi_E}{2^{n}}&\mapsto&[\chi_E\otimes\chi_{F_{n,0}}],
\end{eqnarray*}
where  $E$ is a compact-open subset
of
$Z_w$ and   $\chi_E$  is its  characteristic function.

\item \begin{eqnarray*}
C(Z_w,\Z)&\stackrel{\cong}{\longrightarrow}&K_1(C(Z_w)\otimes
C(\Omega)\rtimes\Z)\\
{\chi_E}&\mapsto&[\chi_E\otimes u+(1-\chi_E)\otimes 1],
\end{eqnarray*}where $u$ is the unitary of $C(\Omega)\rtimes\Z$
corresponding to the positive generator of $\Z$.
\end{enumerate}
\end{proposition}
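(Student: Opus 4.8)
The plan is to combine the Künneth-type splitting already established in the text with the explicit description of $K_*(C(\Omega)\rtimes\Z)$ from Lemma~\ref{lem-coinv} and equations (\ref{equ-mt1})--(\ref{equ-mt2}). First I would note that by the isomorphism of equation (\ref{equ-Morita}) we have $C(Z_w)\otimes C(\Omega)\rtimes\Z\cong C(Z_w)\otimes\K\otimes C(\Omega)\rtimes\Z$ up to the stabilization $\K=\K({L}^2([0,1]))$, so the $K$-theory is unchanged. Since $C(Z_w)$ is an AF-like commutative algebra with $K_0(C(Z_w))\cong C(Z_w,\Z)$ torsion-free and $K_1(C(Z_w))=\{0\}$ (Lemma~\ref{lem-cantor}), the Künneth formula gives natural isomorphisms
$$K_0(C(Z_w)\otimes C(\Omega)\rtimes\Z)\cong C(Z_w,\Z)\otimes K_0(C(\Omega)\rtimes\Z)$$
and
$$K_1(C(Z_w)\otimes C(\Omega)\rtimes\Z)\cong C(Z_w,\Z)\otimes K_1(C(\Omega)\rtimes\Z),$$
realized concretely by the external (Kasparov) product $[\chi_E]\otimes[x]\mapsto[\chi_E]\boxtimes[x]$. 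Then I would substitute $K_0(C(\Omega)\rtimes\Z)\cong\co C(\Omega,\Z)\cong\Z[1/2]$ from Lemma~\ref{lem-coinv}(2), under which the generator $2^{-n}$ corresponds to $[\chi_{F_{n,0}}]$, and $K_1(C(\Omega)\rtimes\Z)\cong\Z$ with generator $[u]$ from equation (\ref{equ-mt2}). Tensoring the first with $C(Z_w,\Z)$ yields $C(Z_w,\Z)\otimes\Z[1/2]\cong C(Z_w,\Z[1/2])$, and tensoring the second with $C(Z_w,\Z)$ yields $C(Z_w,\Z)\otimes\Z\cong C(Z_w,\Z)$.

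The remaining work is to check that the external products are exactly the elements named in the statement. For part (1), the class $\frac{\chi_E}{2^n}\in C(Z_w,\Z[1/2])$ corresponds under the above chain to $[\chi_E]\boxtimes[\chi_{F_{n,0}}]$, and since the external product of two projection classes is the class of the tensor-product projection, this is $[\chi_E\otimes\chi_{F_{n,0}}]$ as claimed. For part (2), the class $\chi_E\in C(Z_w,\Z)$ corresponds to $[\chi_E]\boxtimes[u]$. Here one uses the standard description of the external product of a $K_0$-class represented by a projection $p$ with a $K_1$-class represented by a unitary $v$: it is the class of the unitary $p\otimes v+(1-p)\otimes 1$ in the unitization. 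Applying this with $p=\chi_E$ and $v=u$ gives $[\chi_E\otimes u+(1-\chi_E)\otimes 1]$, which is the asserted generator. I would also remark that both maps are well-defined group homomorphisms: in part (1) because $[\chi_{F_{n,0}}]=2[\chi_{F_{n+1,0}}]$ in $\co C(\Omega,\Z)$ (already verified in the proof of Lemma~\ref{lem-coinv}) makes $\frac{\chi_E}{2^n}\mapsto[\chi_E\otimes\chi_{F_{n,0}}]$ compatible with $\frac{\chi_E}{2^n}=\frac{2\chi_E}{2^{n+1}}$; in part (2) because the assignment is additive on disjoint unions of compact-open sets and $C(Z_w,\Z)$ is generated by such characteristic functions.

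The main obstacle I expect is bookkeeping the various identifications rather than any genuine difficulty: one must track that the Künneth isomorphism, the Morita equivalence of equation (\ref{equ-Morita}) (including the $\K$-stabilization and the role of the rank-one operators $\Theta_{f,g}$), and the mapping-torus/Pimsner--Voiculescu identification of $K_*(C(\Omega)\rtimes\Z)$ are all compatible, so that the named generators in $C(Z_w,\Z[1/2])$ and $C(Z_w,\Z)$ really hit the stated $K$-theory classes on the nose and not merely up to an automorphism. Keeping the external-product formulas for $[\mathrm{proj}]\otimes[\mathrm{proj}]$ and $[\mathrm{proj}]\otimes[\mathrm{unitary}]$ explicit is what pins this down, and the torsion-freeness of $C(Z_w,\Z)$ guarantees the Künneth sequence splits without $\mathrm{Tor}$ contributions, so no extension problems arise.
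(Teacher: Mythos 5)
Your proposal is correct and follows essentially the same route as the paper: the Künneth formula (using torsion-freeness of $K_*(C(Z_w))$ from Lemma \ref{lem-cantor}), realized by the external product formulas $[p]\otimes[q]\mapsto[p\otimes q]$ and $[p]\otimes[v]\mapsto[p\otimes v+(1-p)\otimes 1]$, combined with the identifications of $K_*(C(\Omega)\rtimes\Z)$ from equations (\ref{equ-mt1})--(\ref{equ-mt2}) and Lemma \ref{lem-coinv}. Your additional checks of well-definedness and compatibility of the identifications are sound and only make explicit what the paper leaves implicit.
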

\begin{proof} As we have already mentionned, $K_*(C(Z_w))$ is
  torsion free and the  K\"unneth formula
  provides   isomorphisms
\begin{eqnarray*}
K_0(C(Z_w))\otimes K_0(C(\Omega)\rtimes\Z)&\stackrel{\cong}{\to}&K_0(C(Z_w)\otimes
C(\Omega)\rtimes\Z)\\
\lbrack p\rbrack \otimes\lbrack q\rbrack&\mapsto &\lbrack p\otimes q\rbrack,
\end{eqnarray*}
where $p$ and $q$ are some matrix projectors   with  coefficients  respectively
in $C(Z_w)$ and  $C(\Omega)\rtimes\Z$, and 
 
\begin{eqnarray*}
K_0(C(Z_w))\otimes K_1(C(\Omega)\rtimes\Z)&\stackrel{\cong}{\to}&K_1(C(Z_w)\otimes
C(\Omega)\rtimes\Z)\\\lbrack p\rbrack\otimes\lbrack v\rbrack&\mapsto
&\lbrack p\otimes v+(I_k-p)\otimes I_l\rbrack,
\end{eqnarray*}
where $p$ is a projector in $M_l(C(Z_w))$ and $v$ is a unitary in
$M_k(C(\Omega)\rtimes\Z)$. The proposition is  then  a  consequence
of lemmas \ref{lem-cantor}, \ref{lem-coinv} and of the discussion
related to equations (\ref{equ-mt1}) and (\ref{equ-mt2}).
\end{proof}
In order to compute the invariants and the coinvariants of $$K_*(
C_r^*(\G,\lambda))\cong K_*(C(Z_w)\otimes\K\otimes
C(\Omega)\rtimes\Z),$$ we will need a carefull description of  the
action induced in $K$-theory by the automorphism $\sigma^*\otimes\Upsilon$
of $C(Z_w)\otimes\K\otimes
C(\Omega)\rtimes\Z$, where $\sigma^*$ is the automorphism of
$C(Z_\omega)$ induced by the shift $\sigma$ and where $\Upsilon$ was defined in
section \ref{sec-penrose-uncolor}.
\begin{lemma}
\label{lem-mt1}
If we equip $C(Z_w)\otimes\K\otimes
C(\Omega)\rtimes\Z$ with the $\Z$-action provided by
$\sigma^*\otimes\Upsilon$ and under the $\Z$-equivariant isomorphism
$$C_r^*(\G,\lambda)\cong C(Z_w)\otimes\K\otimes
C(\Omega)\rtimes\Z,$$ the action induced by $\alpha_w$ on $K_0(
C_r^*(\G,\lambda)\cong C(Z_w,\Z[1/2])$ and on  $K_1(
C_r^*(\G,\lambda)\cong C(Z_w,\Z)$ are given by the automorphisms
of abelian groups
\begin{eqnarray*}
\Psi_0: C(Z_w,\Z[1/2])&\to& C(Z_w,\Z[1/2])\\
f&\mapsto& 2f\circ \sigma^{-1}
\end{eqnarray*}
and 
\begin{eqnarray*}
\Psi_1: C(Z_w,\Z)&\to& C(Z_w,\Z)\\
f&\mapsto& f\circ \sigma^{-1}.
\end{eqnarray*}
\end{lemma}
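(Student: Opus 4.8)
The plan is to compute the action of $\alpha_w$ on $K$-theory by tracking it through the chain of identifications already set up in the excerpt. Recall that $C_r^*(\G,\lambda)$ has been identified with $C(Z_w)\otimes\K\otimes C(\Omega)\rtimes\Z$, and under this identification the automorphism $\tilde{\alpha_w}$ induced by the groupoid automorphism $\alpha_w$ corresponds to $\sigma^*\otimes\Upsilon$, where $\Upsilon$ is the automorphism of $\K\otimes C(\Omega)\rtimes\Z$ computed explicitly in section \ref{sec-penrose-uncolor} via equation (\ref{equ-u}). So the task reduces to: (i) understand the action of $\sigma^*$ on $K_*(C(Z_w))\cong C(Z_w,\Z)$, which is plainly $f\mapsto f\circ\sigma^{-1}$ since $\sigma^*$ is just precomposition; and (ii) understand the action of $\Upsilon$ on $K_0(\K\otimes C(\Omega)\rtimes\Z)\cong\Z[1/2]$ and on $K_1(\K\otimes C(\Omega)\rtimes\Z)\cong\Z$. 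Then I combine the two via the K\"unneth isomorphisms of Proposition \ref{prop-Rhull}, using that the external product in $K$-theory is compatible with tensor products of automorphisms.

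For the $K_1$ part, I would use Lemma \ref{lem-action-k1}: the generator of $K_1(\K\otimes C(\Omega)\rtimes\Z)$ after unitization is (the class of) $(1-\Theta_{f,f}\otimes 1)+\Theta_{f,f}\otimes u$ for a norm one $f\in L^2([0,1])$, and the lemma says this is homotopic to the unitary in equation (\ref{equ-action-k1}), which by the formulas for $\Upsilon$ on $\Theta_{g,h}\otimes u$ and $\Theta_{g,h}\otimes 1$ is exactly $\Upsilon$ applied to the generator. Hence $\Upsilon$ fixes the generator of $K_1$, so $\Upsilon_*=\mathrm{id}$ on $K_1\cong\Z$; tensoring with $\sigma^*$ gives $\Psi_1(f)=f\circ\sigma^{-1}$ on $C(Z_w,\Z)$. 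For the $K_0$ part, I would trace the generator $[\chi_{F_{n,0}}]$ (equivalently $2^{-n}\in\Z[1/2]$ under Lemma \ref{lem-coinv}) through $\Upsilon$. Using $\Upsilon(k\otimes\phi)=U_0^*kU_0\otimes W_0\phi+U_1^*kU_1\otimes W_1\phi$ and the relations $W_0\chi_{F_{n,0}}=\chi_{F_{n-1,0}}$ respectively $W_1\chi_{F_{n,0}}=\chi_{\,?\,}$ together with $U_i^*\Theta_{f,f}U_i$ being a rank-one projection, I expect to see $[\chi_{F_{n,0}}\otimes p]\mapsto 2\,[\chi_{F_{n-1,0}}\otimes p]$-type behaviour, i.e. $\Upsilon_*$ acts as multiplication by $2$ on $\Z[1/2]$ (consistent with the remark in section \ref{subsec-ergotic} that $R$ inflates the Haar measure by $2$, and with $\rho_{\alpha_w}=1/2$). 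Combined with $\sigma^*$ this yields $\Psi_0(f)=2f\circ\sigma^{-1}$ on $C(Z_w,\Z[1/2])$.

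The main obstacle I anticipate is bookkeeping in the $K_0$ computation: one must be careful about which compact-open subsets of $\Omega$ the endomorphisms $W_0,W_1$ pull back the basic clopen sets $F_{n,k}$ to, and how the rank-one projections $\Theta_{f,f}$ interact with the partial isometries $U_0,U_1$ under $\Upsilon(k\otimes\phi)=U_0^*kU_0\otimes W_0\phi+U_1^*kU_1\otimes W_1\phi$. The subtlety is that $\Upsilon$ is not unital on $\K\otimes C(\Omega)\rtimes\Z$ but extends to the multipliers, so I would check that $\Upsilon$ nonetheless induces a well-defined and \emph{invertible} map on $K_0$, and identify $\Upsilon_*^{-1}$ as division by $2$; this is where the factor $2$ (rather than $1/2$) must be pinned down unambiguously, by testing on the explicit generator $[\Theta_{f,f}\otimes\chi_{F_{n,0}}]$ and using the relation $[\chi_{F_{n,0}}]=2[\chi_{F_{n+1,0}}]$ in $\co C(\Omega,\Z)$ from the proof of Lemma \ref{lem-coinv}. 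Once the scalar is fixed, compatibility of the K\"unneth isomorphism with $\sigma^*\otimes\Upsilon$ is formal, and the stated formulas for $\Psi_0$ and $\Psi_1$ follow; note both are genuine automorphisms of the respective abelian groups since $\sigma$ is invertible and multiplication by $2$ is invertible on $\Z[1/2]$.
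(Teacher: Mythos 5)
Your proposal follows essentially the same route as the paper: identify the action with $\sigma^*\otimes\Upsilon$, use Lemma \ref{lem-action-k1} to see that $\Upsilon_*$ fixes the $K_1$-generator, and track the $K_0$-generator $[\Theta_{f,f}\otimes\chi_{F_{n,0}}]$ through the explicit formula for $\Upsilon$, combining with $\sigma^*$ via the K\"unneth isomorphisms of Proposition \ref{prop-Rhull}. The one detail you leave open resolves as $W_1\chi_{F_{n,0}}=0$ (since $2\omega+1$ is never in $2^n\Omega$ for $n\geq 1$), so only the $U_0,W_0$ term survives and the image is a single rank-one projector tensored with $\chi_{F_{n-1,0}}$, giving $[\chi_{F_{n,0}}]\mapsto[\chi_{F_{n-1,0}}]=2[\chi_{F_{n,0}}]$ in $\co C(\Omega,\Z)$ --- i.e.\ multiplication by $2$ exactly as you conclude (your displayed ``$\mapsto 2[\chi_{F_{n-1,0}}]$'' would literally be a factor $4$, so mind that normalization).
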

\begin{proof}
According to proposition \ref{prop-Rhull}
and using the Morita equivalence  between $C(Z_w)\otimes
C(\Omega)\rtimes\Z$  and  $C(Z_w)\otimes\K\otimes
C(\Omega)\rtimes\Z$, in order to describe  $\Psi_0$, we have to compute the
image under $(\sigma^*\otimes\Upsilon)_*$ of
$$[\chi_E\otimes\Theta_{f,f}\otimes\chi_{F_{n,0}}]\in K_0(C(Z_w)\otimes\K\otimes
C(\Omega)\rtimes\Z)$$ where,
\begin{itemize}
\item $\chi_E$ is the characteristic function of a compact-open subset
  $E$ of $Z_w$;
\item $\chi_{F_{n,0}}$ is the characteristic function of
  $F_{n,0}=2^n\Omega$ for $n\geq 1$;
\item $\Theta_{f,f}$ is the rank one projector associated to a norm $1$
  function
  $f$ of $L^2([0,1])$.
\end{itemize}
We have 
\begin{eqnarray*}
\sigma^*\otimes\Upsilon(\chi_E\otimes\Theta_{f,f}\otimes\chi_{F_{n,0}})&=&\chi_{\sigma(E)}\otimes\Theta_{U_0f,U_0f}\otimes
W_0\chi_{F_{n,0}}+\chi_{\sigma(E)}\otimes\Theta_{U_1f,U_1f}\otimes W_1\chi_{F_{n,0}}\\
&=&\chi_{\sigma(E)}\otimes\Theta_{U_0f,U_0f}\otimes\chi_{F_{n-1,0}},
\end{eqnarray*}where the last equality holds since
$W_0\chi_{F_{n,0}}=\chi_{F_{n-1,0}}$ and $W_1\chi_{F_{n,0}}=0$. Since
      $\Theta_{U_0f,U_0f}$ is again a rank one projector, then up
to the  Morita equivalence  between $C(Z_w)\otimes
C(\Omega)\rtimes\Z$  and  $C(Z_w)\otimes\K\otimes
C(\Omega)\rtimes\Z$, the image of $[\chi_E\otimes\chi_{F_{n,0}}]\in K_0(C(Z_w)\otimes
C(\Omega)\rtimes\Z)$ under $(\sigma^*\otimes\Upsilon)_*$ is $[\chi_{\sigma(E)}\otimes\chi_{F_{n-1,0}}]\in K_0(C(Z_w)\otimes
C(\Omega)\rtimes\Z)$. Using proposition \ref{prop-Rhull}, this 
completes the  description of $\Psi_0$. For $\Psi_1$, notice first that up
to the isomorphism
$$K_0(C(Z_w))\otimes K_0(\K\otimes
C(\Omega)\rtimes\Z)\stackrel{\cong}{\to} K_1(C(Z_w)\otimes\K\otimes
C(\Omega)\rtimes\Z)$$ provided by the K\"unneth formula, the action of
$(\sigma^*\otimes\Upsilon)_*$ is $\sigma^*_*\otimes\Upsilon_*$ and then the
result is a consequence of lemma \ref{lem-action-k1} and of
proposition \ref{prop-Rhull}.
\end{proof}
Let us equip $C(Z_w,\Z[1/2])$ and  $C(Z_w,\Z)$ with the
$\Z$-actions respectively provided by $\Psi_0$ and $\Psi_1$. Then
since $\|\Psi_0(h)\|=2\|h\|$ for any $h$ in $C(Z_w,\Z[1/2])$, we
get that $\inv C(Z_w,\Z[1/2])=\{0\}$
We are now in position to get a complete description of the $K$-theory
of $C(X^G_{\mathcal{P}(w)})\rtimes G$. In view of the short exact
sequences of equations (\ref{eq-exsq1}) and (\ref{eq-exsq2}), the two following theorems are
then consequences of lemma \ref{lem-mt1} and of proposition \ref{prop-Rhull}.
\begin{theorem}\label{thm-k0-gl}
We have a short exact sequence
$$0\to \co C(Z_w,\Z[1/2])\stackrel{\iota_0}{\to}
K_0(C(X^G_{\mathcal{P}(w)})\rtimes G)\to\inv C(Z_w,\Z)\to 0,$$
where
up to the Morita equivalence $C(X^G_{\mathcal{P}(w)})\rtimes G\cong C_r^*(\G,\lambda)\rtimes_{\tilde{\alpha_w}}\Z$, the element
$\iota_0[2^{-n}\chi_E]$ is the image of
$[\chi_E\otimes\Theta_{f,f}\otimes\chi_{F_{n,0}}]\in K_0(C(Z_w)\otimes\K\otimes
C(\Omega)\rtimes\Z)$ under the homomorphism induced in $K$-theory by the
inclusion
$$C(Z_w)\otimes\K\otimes
C(\Omega)\rtimes\Z\cong C_r^*(\G,\lambda)\hookrightarrow
C_r^*(\G,\lambda)\rtimes_{\tilde{\alpha_w}}\Z,$$
where\begin{itemize}
\item $\chi_E$ is the characteristic function of a compact-open subset
  $E$ of $Z_w$;
\item $\chi_{F_{n,0}}$ is the characteristic function of
  $F_{n,0}=2^n\Omega$;
\item $\Theta_{f,f}$ is the rank one projector associated to a norm $1$
  function
  $f$ of $L^2([0,1])$.
\end{itemize}
\end{theorem}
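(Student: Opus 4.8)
The plan is to obtain the statement by feeding the explicit computations of Sections \ref{sec-penrose-uncolor}--\ref{section_K_th} into the Pimsner--Voiculescu exact sequence. First I would use Proposition \ref{prop-morita}, which gives a Morita equivalence $C(X^G_{\mathcal{P}(w)})\rtimes G\simeq C_r^*(\G,\lambda)\rtimes_{\tilde{\alpha_w}}\Z$, so that $K_0(C(X^G_{\mathcal{P}(w)})\rtimes G)\cong K_0(C_r^*(\G,\lambda)\rtimes_{\tilde{\alpha_w}}\Z)$ and it suffices to analyse the right-hand crossed product by $\Z$. For this I invoke the short exact sequence (\ref{eq-exsq1}),
$$0\to \co K_0(C_r^*(\G,\lambda))\to K_0(C_r^*(\G,\lambda)\rtimes_{\tilde{\alpha_w}}\Z)\to \inv K_1(C_r^*(\G,\lambda))\to 0,$$
which is the specialisation of (\ref{equ-pv0}) to $A=C_r^*(\G,\lambda)$ and $\theta=\tilde{\alpha_w}$, and I recall that its left-hand inclusion is induced by $\iota_*$ for the canonical inclusion $\iota\colon C_r^*(\G,\lambda)\hookrightarrow C_r^*(\G,\lambda)\rtimes_{\tilde{\alpha_w}}\Z$.

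Second, I would substitute the descriptions already at hand. Proposition \ref{prop-Rhull} identifies $K_0(C_r^*(\G,\lambda))$ with $C(Z_w,\Z[1/2])$ and $K_1(C_r^*(\G,\lambda))$ with $C(Z_w,\Z)$, using the isomorphism of equation (\ref{equ-Morita}) and the Morita identification $C_r^*(\G,\lambda)\cong C(Z_w)\otimes\K\otimes C(\Omega)\rtimes\Z$; under the first identification $2^{-n}\chi_E$ corresponds to the class of $\chi_E\otimes\Theta_{f,f}\otimes\chi_{F_{n,0}}$. Lemma \ref{lem-mt1} then identifies the automorphism of these groups induced by $\tilde{\alpha_w}$ with $\Psi_0\colon f\mapsto 2f\circ\sigma^{-1}$ on $C(Z_w,\Z[1/2])$ and with $\Psi_1\colon f\mapsto f\circ\sigma^{-1}$ on $C(Z_w,\Z)$. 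Plugging these into (\ref{eq-exsq1}) and transporting back along the Morita equivalence of Proposition \ref{prop-morita} gives the asserted short exact sequence, with $\co$ and $\inv$ taken with respect to $\Psi_0$ and $\Psi_1$ respectively.

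Finally, the explicit form of $\iota_0$ is read off by tracking these identifications: the class $[2^{-n}\chi_E]\in\co C(Z_w,\Z[1/2])$ corresponds to the class of $\chi_E\otimes\Theta_{f,f}\otimes\chi_{F_{n,0}}$ in $K_0(C(Z_w)\otimes\K\otimes C(\Omega)\rtimes\Z)\cong K_0(C_r^*(\G,\lambda))$, and the left arrow of (\ref{eq-exsq1}) is $\iota_*$, so $\iota_0[2^{-n}\chi_E]$ is exactly the image of that class under the inclusion into $C_r^*(\G,\lambda)\rtimes_{\tilde{\alpha_w}}\Z$. I do not expect a genuine obstacle here: the only point requiring care is checking that the chain of identifications (the Morita equivalence, the K\"unneth isomorphism, equation (\ref{equ-Morita}), and the inclusion $\iota$) is compatible with the $\Z$-actions and with $\iota_*$, and this compatibility is precisely what Lemma \ref{lem-mt1} and Proposition \ref{prop-Rhull} have been set up to provide.
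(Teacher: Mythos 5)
Your proposal is correct and follows exactly the route the paper takes: the theorem is stated there as an immediate consequence of the Pimsner--Voiculescu sequence (\ref{eq-exsq1}) combined with Proposition \ref{prop-morita}, Proposition \ref{prop-Rhull} and Lemma \ref{lem-mt1}. You have merely spelled out the chain of identifications that the paper leaves implicit, including the correct reading of $\iota_0$ as the map induced by the inclusion $C_r^*(\G,\lambda)\hookrightarrow C_r^*(\G,\lambda)\rtimes_{\tilde{\alpha_w}}\Z$.
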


\begin{theorem}We have an isomorphism $$\co
  C(Z_w,\Z)\stackrel{\cong}{\to}K_1(C(X^G_{\mathcal{P}(w)})\rtimes
  G)$$ induced on the coinvariants by the composition
$$ C(Z_w,\Z)\cong K_0(C(Z_w))\stackrel{\otimes[u]}{\to}
K_1(C(Z_w)\otimes C(\Omega)\rtimes\Z)\cong
K_1(C_r^*(\G,\lambda)\to
K_1(C_r^*(\G,\lambda)\rtimes_{\tilde{\alpha_w}}\Z),$$where
\begin{itemize}
\item $\otimes[u]$ is the external product in $K$-theory by the class
  in $K_1(C(\Omega)\rtimes\Z)$ of the unitary $u$ of
  $C(\Omega)\rtimes\Z$   corresponding to the positive generator of
  $\Z$;
\item the last map  in the composition  is the
  homomorphism induced in $K$-theory by the inclusion
  $C_r^*(\G,\lambda)\hookrightarrow
  C_r^*(\G,\lambda)\rtimes_{\tilde{\alpha_w}}\Z$.
\end{itemize}
\end{theorem}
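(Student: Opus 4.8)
The plan is to read the statement off the Pimsner--Voiculescu machinery already assembled, specialized to $A=C_r^*(\G,\lambda)$ and the automorphism $\theta=\tilde{\alpha_w}$. By Proposition \ref{prop-morita} the algebra $C(X^G_{\mathcal{P}(w)})\rtimes G$ is Morita equivalent to $C_r^*(\G,\lambda)\rtimes_{\tilde{\alpha_w}}\Z$, so it suffices to compute $K_1$ of the latter, and this is governed by the short exact sequence (\ref{eq-exsq2}),
$$0\to \co K_1(C_r^*(\G,\lambda))\to K_1\big(C_r^*(\G,\lambda)\rtimes_{\tilde{\alpha_w}}\Z\big)\to\inv K_0(C_r^*(\G,\lambda))\to 0,$$
coming from (\ref{equ-pv1}).

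First I would substitute the identifications of Lemma \ref{lem-mt1}: under the $\Z$-equivariant isomorphism $C_r^*(\G,\lambda)\cong C(Z_w)\otimes\K\otimes C(\Omega)\rtimes\Z$ one has $K_0(C_r^*(\G,\lambda))\cong C(Z_w,\Z[1/2])$ with $\tilde{\alpha_w}$ acting by $\Psi_0\colon f\mapsto 2f\circ\sigma^{-1}$, and $K_1(C_r^*(\G,\lambda))\cong C(Z_w,\Z)$ with $\tilde{\alpha_w}$ acting by $\Psi_1\colon f\mapsto f\circ\sigma^{-1}$. The decisive point, already recorded just after Lemma \ref{lem-mt1}, is that $\Psi_0$ multiplies the sup-norm by $2$, so the only invariant element of $C(Z_w,\Z[1/2])$ is $0$; hence $\inv K_0(C_r^*(\G,\lambda))=\{0\}$, and the exact sequence above degenerates to an isomorphism
$$\co C(Z_w,\Z)\xrightarrow{\ \cong\ }K_1\big(C_r^*(\G,\lambda)\rtimes_{\tilde{\alpha_w}}\Z\big),$$
where the coinvariants are taken for $\Psi_1$. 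Since $\Psi_1$ is the automorphism induced by $\sigma^{-1}$ and $\{x-\Psi(x):x\in M\}=\{x-\Psi^{-1}(x):x\in M\}$ for any automorphism $\Psi$ of an abelian group $M$, this agrees with the coinvariant group $\co C(Z_w,\Z)$ for the shift, as in the statement.

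It then remains to check that this isomorphism is the composition written in the theorem. In the Pimsner--Voiculescu sequence the inclusion $\co K_1(A)\hookrightarrow K_1(A\rtimes_\theta\Z)$ is induced by $\iota_*$, the map in $K$-theory coming from $\iota\colon A\hookrightarrow A\rtimes_\theta\Z$; this is precisely the last arrow of the composition. Finally, the proof of Proposition \ref{prop-Rhull}(2), together with the K\"unneth argument preceding it and Morita invariance (tensoring with $\K$), shows that the identification $K_1(C_r^*(\G,\lambda))\cong C(Z_w,\Z)$ is given by $C(Z_w,\Z)\cong K_0(C(Z_w))$ from Lemma \ref{lem-cantor} followed by the external product $\otimes[u]$ with the class of the odometer unitary $u\in C(\Omega)\rtimes\Z$ --- i.e.\ by the first two arrows of the composition. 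Assembling these identifications yields the theorem. I do not expect a real obstacle here; the only steps needing attention are the (harmless) comparison of the $\Psi_1$-coinvariants with the shift-coinvariants, and unwinding the chain of Morita equivalences and K\"unneth isomorphisms so that the inclusion map in (\ref{eq-exsq2}) is literally the stated composition rather than merely conjugate to it.
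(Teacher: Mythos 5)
Your proposal is correct and follows exactly the route the paper intends: the paper itself gives no separate proof, stating only that the theorem is a consequence of the exact sequence (\ref{eq-exsq2}), Lemma \ref{lem-mt1} (whence $\inv K_0(C_r^*(\G,\lambda))\cong\inv C(Z_w,\Z[1/2])=\{0\}$ by the norm-doubling of $\Psi_0$), and Proposition \ref{prop-Rhull}, which is precisely what you assemble. Your added remarks on identifying $\Psi_1$-coinvariants with shift-coinvariants and on the inclusion $\iota_*$ being the stated composition are accurate and fill in the details the paper leaves implicit.
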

The short exact sequence of theorem \ref{thm-k0-gl}, admits an
explicit 
splitting  which can be described in the following way:
assume first that $(Z_w,\sigma)$ is minimal. In particular,
$\inv C(Z_w,\Z)\cong\Z$ is generated by $1\in C(Z_w,\Z)$.
Let us considerer the following diagram, whose left square is commutative
$$\begin{CD}
K_1(C^*(\R))@>>>K_1(C^*(\G,\lambda))\\
@VVV   @VVV\\
\Z\cong K_0(\C)@>>>K_0(C(X^\cN_{\mathcal{P}}\times Z_w))@>{ev}_*>>K_0(C(\Omega\times Z_w))\\
\end{CD},$$ where 
\begin{itemize}
\item the horizontal maps of the left square are induced by the inclusion
 $\C\hookrightarrow C(X^\cN_{\mathcal{P}}\times
Z_w)$.
\item vertical maps are  the Thom-Connes {isomorphisms}.
\item The map ${ev}:C(X^\cN_{\mathcal{P}}\times Z_w))\lto
  C(\Omega\times Z_w)$ is induced by the continuous map
  $\Omega\to X^\cN_{\mathcal{P}}\cong (\Omega\times \R)/\cA_o;\,
  x\mapsto [x,0]$;
\end{itemize}
Up to the Morita equivalence between $C^*(\G,\lambda)$
and
  $C(Z_w)\ts C(\Omega)\rtimes\Z$, the right down staircase
  is the boundary of the Pimsner-Voiculescu six-term exact
  sequence that computes $K_*(C(Z_w)\ts C(\Omega)\rtimes\Z)$. From this, we see that 
$K_1(C^*(\G,\lambda)\cong\Z$ is generated by the image  of the
 generator $\zeta$ of $K_1(C^*(\R))$ corresponding under the canonical
 identification $K_1(C^*(\R))\cong  K_1(C_0(\R))\cong  K_0(\C)\cong
 \Z$ to the class of any rank one projector in some $M_n(\C)$.
On the other hand, we have a  diagram with commutative squares
{\small{$$\begin{CD}
K_0(C^*(\R)\rtimes\R_+^*)@>>>K_0(C(X^G_{\mathcal{P}(w)})\rtimes
G)@>>\cong>K_0(C^*(\G_{\alpha_w},\lambda_{\alpha_w})\rtimes \R)@>>\cong>K_0(C^*(\G,\lambda)_{\alpha_w}\rtimes\R)\\
@VVV   @VVV @VVV @VVV\\
K_1(C^*(\R))@>>>K_1(C(X^G_{\mathcal{P}(w)})\rtimes
\R)@>>\cong>K_1(C^*(\G_{\alpha_w},\lambda_{\alpha_w}))@>>\cong>K_1(C^*(\G,\lambda)_{\alpha_w})\\
@.@.@.@VV{ev}_*V\\
&&&&&&K_0(C^*(\G,\lambda))
\end{CD}$$ }}
where,
\begin{itemize}
\item the horizontal maps of the left square are induced by the
  inclusion\\
$C^*(\R)\hookrightarrow C(X^G_{\mathcal{P}(w)})\rtimes
\R;$
\item the horizontal maps of the middle  square are induced by the
  isomorphism of lemma \ref{lem-psi}
\item the horizontal maps of the right   square are induced by the
  isomorphism of proposition \ref{prop-iso}
\item the first row of vertical maps are Thom-Connes isomorphisms.
\end{itemize}
It is then straightforward to check that the down staircase of the
diagram is indeed induced by the inclusion $C^*(\R)\hookrightarrow C(X^\cN_{\mathcal{P}}\times
Z_w)\rtimes\R=C^*(\G,\lambda)$.  
Notice that  $K_0(C^*(\R)\rtimes\R_+^*)\cong\Z$ (by  Thom-Connes
isomorphism). Moreover, under the inclusion
$C_0(\R_+^*)\rtimes\R_+^*\hookrightarrow
C_0(\R)\rtimes\R_+^*\cong C^*(\R)\rtimes\R_+^*$, any rang one
projector $e$ of  $\K(L^2(\R_+^*))\cong C_0(\R_+^*)\rtimes\R_+^*$ provides a generator
for  $K_0(C^*(\R)\rtimes\R_+^*)$ whose image under the left vertical
map is the generator $\zeta$ for $K_1(C^*(\R))\cong K_0(\C)\cong
\Z$. Using the description of the boundary map of
Pimsner-Voiculescu six-term exact sequence, we see that  $e$,  
viewed as an element   of  $C(X^G_{\mathcal{P}(w)})\rtimes
G$ whose class in $K$-theory provides a lift  for $1\in C(Z_w,\Z)$ in the short exact
sequence of theorem \ref{thm-k0-gl}.

\medskip

In general, $\inv C(Z_w,\Z)$ is generated by
characteristic functions  of $\Z$-invariant compact-open subsets of
$Z_w$.
According to proposition \ref{caracterisation}, any $\Z$-invariant
compact-open subset $E$  of
$Z_w$ provides a $\R$-invariant compact subset $\widetilde{E}$ of
$X^G_{\mathcal{P}}$. Hence, with above notations, if
  $\chi_{\widetilde{E}}$ is the characteristic function for
  $\widetilde{E}$, then  $\chi_{\widetilde{E}}e$ can be viewed as an
    element of $C(X^G_{\mathcal{P}(w)})\rtimes
G$. Let 
$\upsilon:\inv C(Z_w,\Z)\to K_0(C(X^G_{\mathcal{P}(w)})\rtimes
G)$ be the group homomorphism uniquelly defined by   $\upsilon(\chi_E)=\chi_{\widetilde{E}}e$
 for $E$ a  $\Z$-invariant compact-open subset of
$Z_w$. Then $\upsilon$ is  a section for the short exact
sequence of theorem \ref{thm-k0-gl}.

\section{Topological invariants for the continuous hull}

It is known that for Euclidian tilings, topological
invariants of the continuous hull are closely related to the
$K$-theory of the  $C^*$-algebra associated to the tiling. The
$K$-theory of the latter {turn out to be} isomorphic to the $K$-theory of the
hull which is using the Chern character rationally isomorphic to the
integral C\v ech cohomology. Moreover, in dimension less or equal
to $3$, the Chern character can be defined valued in integral
cohomology and we eventually obtain an  isomorphism between the integral  C\v ech
cohomology of the hull and the
$K$-theory of the  $C^*$-algebra associated to the tiling. In
consequence of this fact, a lot of interest has been generated in the
computation of topological invariants of the hull.

For Penrose hyperbolic tilings, since the group of affine isometries of the hyperbolic half-plane is isomorphic to a
semi-direct product  $\R\rtimes\R$, we get using the Thom-Connes isomorphism that  
\begin{equation}\label{equ-TC}
K_*(C(X^G_{\mathcal{P}(w)})\rtimes G)\cong
K^*(X^G_{\mathcal{P}(w)}).\end{equation} Moreover, since the cohomological
dimension of  $X^G_{\mathcal{P}(w)}$ is $2$, the Chern character can
also be defined with values in integral C\v ech cohomology and hence we
get as in the Euclidian case of low dimension
 an isomorphism $$K_*(C(X^G_{\mathcal{P}(w)})\rtimes G)\cong
\check{H}(X^G_{\mathcal{P}(w)},\Z).$$ These topological invariants can be
  indeed computed directly using technics very closed to those used in
  section \ref{section_K_th}. Indeed for a $C^*$-algebra $A$ provided
  with an  automorphism $\beta$, there is natural isomorphisms
 \begin{equation}\label{equ-mt-iso}K_0(A_\beta)\cong
    K_1(A\rtimes_\beta\Z)\quad\text{ and }\quad K_1(A_\beta)\cong
    K_0(A\rtimes_\beta\Z),\end{equation}
called  the  mapping torus isomorphisms, where $A_\beta$ is the mapping torus algebra constructed at the end of section
\ref{subsec-suspension}. Recall from proposition \ref{caracterisation} that
   $X^G_{\mathcal{P}(w)}$ can be viewed as a double suspension
\begin{equation}\label{equ-double-suspension} \left(((\Omega \times \R) \slash \cA_{o})\times Z_\omega\times
   \R\right) \slash{\cA_{f}}\end{equation}  with 
$f:(\Omega \times \R) \slash \cA_{o} \to (\Omega \times \R) \slash
\cA_{o};\, ([x,t],\omega')\mapsto ([2x,2t],\sigma(\omega'))$. In regard
of the mapping torus isomorphism,  the double
  crossed product by $\Z$ corresponds in $K$-theory  to the double
  suspension structure on $X^G_{\mathcal{P}(w)}$. For people
  interested in  topological invariants, we explain how a straight
  computation can be carried out.

For a $C^*$-algebra $A$ provided with an automorphism $\beta$, the short exact sequence
\begin{equation}
\label{equ-mt}
0\to C_0((0,1),A)\to A_\beta\stackrel{ev}{\to} A\to  0,
\end{equation}
where $ev$ is the evaluation at $0$ of elements of $A_\beta\subset
C([0,1],A)$, gives rise to  short exact sequences
$$0\to \co K_1(A)\to K_0(A_\beta)\to \inv K_0(A)\to 0$$ and $$0\to \co
K_0(A)\to K_1(A_\beta)\to \inv K_1(A)\to 0$$ where invariants and
coinvariants are taken with respect to the action induced by $\beta$ on
$K_*(A)$ (see section \ref{section_K_th}). In particular, if $X$ is a compact set  and $f:X\to X$ is
a homeomorphism, and {with the} notations of section \ref{subsec-hull}
 the mapping torus of $C(X)$ with respect to the automorphism
induced by $f$ is  $C((X \times \R) \slash \cA_{f})$.  We deduce  short exact sequences
 $$0\to \co K^1(X)\to K^0((X \times \R) \slash \cA_{f})\to \inv K^0(X)\to 0$$ and $$0\to \co
K^0(X)\to K^1((X \times \R) \slash \cA_{f})\to \inv K^1(X)\to
0.$$ Similarly, we have short exact sequences in C\v ech cohomology
$$0\to \co \check{H}^{n-1}(X,\Z)\to \check{H}^n((X \times \R) \slash
\cA_{f},\Z)\to \inv \check{H}^n(X,\Z)\to 0,$$
derived from the inclusion \begin{equation}\label{equ-inclusion}
(0,1)\times X\hookrightarrow (X \times \R) \slash
\cA_{f}.\end{equation}
Since {the space} $X^G_{\mathcal{P}(w)}$ has a structure of double
suspension, we see  following the same route as in section
\ref{section_K_th}    that $X^G_{\mathcal{P}(w)}$ only
has cohomology in degree $0,\,1$ and $2$
 and we get isomorphisms
\begin{eqnarray}\label{equ-k0-hull}
K^0(X^G_{\mathcal{P}(w)})&\cong& \inv C(Z_w,\Z)\oplus \co C(Z_w,\Z[1/2])\\
\label{equ-k1-hull} K^1(X^G_{\mathcal{P}(w)})&\cong& \co C(Z_w,\Z)\\
\label{equ-h0-hull} \check{H}^0(X^G_{\mathcal{P}(w)},\Z)&\cong& \inv C(Z_w,\Z)\\
\label{equ-h1-hull} \check{H}^1(X^G_{\mathcal{P}(w)},\Z)&\cong& \co C(Z_w,\Z)\\
\label{equ-h2-hull} \check{H}^2(X^G_{\mathcal{P}(w)},\Z)&\cong&  \co C(Z_w,\Z[1/2]).
\end{eqnarray}
Recall that invariants and  coinvariants of $C(Z_w,\Z)$ are taken with
respect to the automorphism  $$C(Z_w,\Z)\to C(Z_w,\Z);\,
f\mapsto f\circ\si^{-1},$$
and that coinvariants of $C(Z_w,\Z[1/2])$ are taken with
respect to the automorphism  $$C(Z_w,\Z[1/2])\to C(Z_w,\Z[1/2]);\,
f\mapsto 2f\circ\si^{-1}.$$
Let us describe explicitly these isomorphisms.
The identification of equation (\ref{equ-double-suspension}) yields to a
continuous map \begin{equation}\label{equ_proj}
X^G_{\mathcal{P}(w)}\to (Z_w\times\R)\slash
\cA_{\sigma},\end{equation}induced by the equivariant projection $((\Omega \times \R) \slash \cA_{o})\times Z_\omega\times
   \R\to Z_w\times\R$. Together with the inclusion
$Z_w\times(0,1)\hookrightarrow (Z_w\times\R)\slash
\cA_{\sigma}$, this gives rise to a homomorphism
$K^1(Z_w\times(0,1))\to  K^1(X^G_{\mathcal{P}(w)})$ inducing under
Bott peridodicity the isomorphism of equation (\ref{equ-k1-hull})
(recall from lemma \ref{lem-cantor} that $K^0(Z_w)=K_0(C(Z_w))\cong
C(Z_\omega,\Z)$). The identification  of equation (\ref{equ-h1-hull}) is
obtained in the same way by using the isomorphism
\begin{equation*}C(Z_\omega,\Z)\cong\check{H}^0(Z_\omega,\Z)\stackrel{\cong}{\to}
  \check{H}^1(Z_\omega\times(0,1),\Z)\end{equation*}provided by the cup
product by  the fundamental class of $\check{H}^1((0,1),\Z)$.  Recall
that $\inv C(Z_w,\Z)$ is generated by characteristic functions of
invariant compact-open  subsets  of $Z_\omega$. If  $E$ is   such a
subset, then $(E\times\R)\slash
{\cA}_{\sigma_{\slash E}}$ is a compact-open subset of $(Z_w\times\R)\slash
\cA_{\sigma}$ and is pulled-back under the map of  equation
(\ref{equ_proj}) to a compact-open subset 
$\widetilde{E}$ of  $X^G_{\mathcal{P}(w)}$. The isomorphism of equation
(\ref{equ-h0-hull}) identifies $\chi_E\in \inv C(\Z_\omega,\Z)$  with the class of
${\widetilde{E}}$ in   $\check{H}^0(X^G_{\mathcal{P}(w)},\Z)$ and
 the isomorphism of equation (\ref{equ-k0-hull}) identifies $\chi_E$ with the class
of  $\chi_{\widetilde{E}}$ in  $K^0(Z_w)=K_0(C(Z_w))$. Using twice the
inclusion of equation (\ref{equ-inclusion}) for the double suspension
structure of $X^G_{\mathcal{P}(w)}$, we obtain an inclusion 
$\Omega \times Z_\omega\times (0,1)^2\hookrightarrow X^G_{\mathcal{P}(w)}$
and hence by Bott  periodicity a map $C(\Omega\times Z_\omega,\Z)\to
K^0(X^G_{\mathcal{P}(w)})$. Then, if $E$ is a compact-open subset of $Z_\omega$ and
$n$ is an integer, the image of $\chi_{E\times 2^n\Omega}$ under this map
is up to  identification of equation   (\ref{equ-k0-hull}) the class of $\chi_E/2^n$ in $\co C(Z_w,\Z[1/2])$. 
The description of the identification  of equation (\ref{equ-h2-hull}) is
obtained in the same way by using the isomorphism
\begin{equation*}C(\Omega \times Z_\omega,\Z)\cong\check{H}^0(\Omega
  \times Z_\omega,\Z)\stackrel{\cong}{\to}
  \check{H}^2(\Omega \times Z_\omega\times(0,1)^2,\Z)\end{equation*} provided by the cup
product by  the fundamental class of
$\check{H}^2((0,1)^2,\Z)$. Moreover, since the Chern character is
natural and  intertwins Bott periodicity and the cup product by the
fundamental class of $\check{H}_c^1((0,1),\Z)$, we deduce that  up to the
identifications of equations (\ref{equ-k0-hull}) to (\ref{equ-h2-hull}),
it is given by  the identity maps of  $\inv C(Z_w,\Z)\oplus \co
C(Z_w,\Z[1/2])$ and of
$\co C(Z_w,\Z)$.
It is easy to guess how the generators of $K^*(X^G_{\mathcal{P}(w)})$
described in  equations (\ref{equ-k0-hull}) and  (\ref{equ-k1-hull})
should be identified with those of $K_*(C(X^G_{\mathcal{P}(w)})\rt G)$
described in section \ref{section_K_th} 
under Thom-Connes isomorphism of equation (\ref{equ-mt-iso}). Recall first that  for a unital
$C^*$-algebra $A$ provided with an automorphism $\beta$,
\begin{itemize}
\item the mapping torus $A_\beta$ is provided with an action
  $\widehat{\beta}$ of $\R$ by automorphisms
  (see section \ref{subsec-suspension}) and moreover
  $A\rtimes_\beta\Z$ and   $A_\beta\rtimes_{\widehat{\beta}}\R$ are
  Morita  equivalent; 
\item the mapping torus isomorphisms are   the composition of the 
  Thom-Connes isomorphisms $K_0(A_\beta)\stackrel{\cong}{\longrightarrow} K_1(A\rtimes_\beta\Z)$
  and $K_1(A_\beta)\stackrel{\cong}{\longrightarrow} K_0(A\rtimes_\beta\Z)$ with the isomorphism 
$K_*(A_\beta\rtimes_{\widehat{\beta}}\R)\cong
K_*(A\rtimes_{{\beta}}\Z)$ induced with the above Morita-equivalence;
\end{itemize}
It is then straightforward to check that viewing
$C(X^G_{\mathcal{P}(w)})\rt G$ as a double crossed product by $\Z$ as we
did in section \ref{sec-c*-tiling}, the Thom-Connes isomorphism
$$K_*(C(X^G_{\mathcal{P}(w)}))\stackrel{\cong}{\to}
K_*(C(X^G_{\mathcal{P}(w)})\rt G)$$ is obtained by using twice the
mapping torus isomorphim (up to stabilisation for the second one).
In view of our purpose of idenfying the generators of 
$K_*(C(X^G_{\mathcal{P}(w)}))$ with those of $K_*(C(X^G_{\mathcal{P}(w)})\rt G)$,
we will need the following alternative description of the mapping
torus isomorphism using the bivariant Kasparov $K$-theory groups  $KK_*^\Z(\bullet,\bullet)$ \cite{Ka}. Let $A$ be a $C^*$-algebra and let $\beta$ be an
automorphism of $A$. Since the action of $\Z$ on $\R$ by translations is free and
proper, we have a Morita equivalence between $A_\beta$ and
$C_0(\R,A)\rtimes\Z$, where $C_0(\R,A)\cong C_0(\R)\otimes A$ is
equipped with the diagonal action of $\Z$. Recall that the
$\Z$-equivariant  unbounded
operator $\imath\frac{d}{dt}$ of $L^2(\R)$ gives rise to a unbounded
$K$-cycle and hence  to an element $y$ in $KK^\Z_1(C_0(\R),\C)$. Then
the mapping torus isomorphism of equation (\ref{equ-mt-iso}) is the
composition
\begin{equation*}K_*(A_\beta)\stackrel{\cong}{\longrightarrow}
  K_*(C_0(\R,A)\rtimes\Z)\stackrel{\otimes_{C_0(\R,A)\rtimes\Z}J_\Z(\tau_A(y))}{\longrightarrow}K_{*+1}(A\rtimes_\beta\Z)\end{equation*}
where,
\begin{itemize}
\item the first map comes from the Morita equivalence;
\item $J_\Z:KK_*^\Z(\bullet,\bullet)\to
  KK_*(\bullet\rtimes\Z,\bullet\rtimes\Z)$ is the Kasparov
  transformation in bivariant $KK$-theory;
\item for any $C^*$-algebra $B$ equipped with an action of $\Gamma$ by
  automorphism $\tau_B:KK^\Gamma_*(\bullet,\bullet)\to
  KK^\Gamma_*(\bullet\otimes B,\bullet \otimes B)$ is the tensorisation
  operation;
\item $\otimes_{C_0(\R,A)\rtimes\Z}J_\Z(\tau_A(y))$
stands for the right Kasparov product  by
$J_\Z(\tau_A(y))$.
\end{itemize}

 Then the
identification  between  the generators of $K^*(X^G_{\mathcal{P}(w)})$ and of
$K_*(C(X^G_{\mathcal{P}(w)})\rtimes G)$  can be achieved using 
the next two  lemmas.
\begin{lemma}\label{lem-inv}Let $A$ be a unital $C^*$-algebra together with an
  automorphism $\beta$.
 Let $e$ be invariant projector in $A$ and let $x_e$ be the class in
 $K_0(A_\beta)$ of  
 the  projector  $[0,1]\to A;\, t\mapsto e$. Then the image of $x_e$
 under the mapping torus isomorphism is equal to the
  class of the unitary $1-e+e\cdot u$ of
  $A\rtimes_\beta\Z$ in $K_1(A\rtimes_\beta\Z)$  (here $u$ is the unitary
  of $A\rtimes_\beta\Z$ corresponding to the positive generator of $\Z$);
\end{lemma}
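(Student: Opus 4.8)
The idea is to reduce to the case $A=\C$, $\beta=\mathrm{id}$ via the naturality of the mapping torus isomorphism. For a $C^*$-algebra $B$ with automorphism $\gamma$, write $\Theta_B\colon K_0(B_\gamma)\stackrel{\cong}{\lto}K_1(B\rtimes_\gamma\Z)$ for this isomorphism. By the description recalled above, $\Theta_B$ is the composition of the Morita equivalence $K_*(B_\gamma)\stackrel{\cong}{\lto}K_*(C_0(\R,B)\rtimes\Z)$ with the right Kasparov product by $J_\Z(\tau_B(y))$, where $y\in KK^\Z_1(C_0(\R),\C)$ is the class attached to $\imath\frac{d}{dt}$ on $L^2(\R)$. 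Each of these operations is functorial in $(B,\gamma)$ with respect to equivariant $*$-homomorphisms --- the imprimitivity bimodule $C_0(\R,B)$ depends functorially on $(B,\gamma)$, and $\tau_B$, $J_\Z$ and the Kasparov product are functorial --- so for every equivariant $*$-homomorphism $\varphi\colon(B,\gamma)\to(B',\gamma')$ the square
$$\begin{CD}
K_0(B_\gamma) @>{(\varphi_\gamma)_*}>> K_0(B'_{\gamma'})\\
@V{\cong}VV @VV{\cong}V\\
K_1(B\rtimes_\gamma\Z) @>{(\varphi\rtimes\Z)_*}>> K_1(B'\rtimes_{\gamma'}\Z)
\end{CD}$$
commutes, the vertical maps being $\Theta_B$ and $\Theta_{B'}$.

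Next, I would apply this to $\varphi\colon\C\to A;\,\lambda\mapsto\lambda e$, which is a $*$-homomorphism equivariant for the trivial automorphism of $\C$ since $\beta(e)=e$. Under the canonical identification $\C_{\mathrm{id}}\cong C(S^1)$, the induced map $\varphi_{\mathrm{id}}\colon\C_{\mathrm{id}}\to A_\beta$ sends the unit to the constant path $t\mapsto e$, so $x_e=(\varphi_{\mathrm{id}})_*(x)$ with $x\in K_0(\C_{\mathrm{id}})$ the class of the unit. Likewise $\varphi\rtimes\Z\colon\C\rtimes_{\mathrm{id}}\Z=C^*(\Z)\to A\rtimes_\beta\Z$ sends the canonical unitary $u_0$ of $C^*(\Z)$ to $eu$; since $e$ is $\beta$-invariant it commutes with $u$, so $eu$ is a unitary of the corner $e(A\rtimes_\beta\Z)e$ and $(\varphi\rtimes\Z)_*[u_0]=[eu+(1-e)]=[1-e+e\cdot u]$ in $K_1(A\rtimes_\beta\Z)$. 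By the commuting square it thus remains to prove that $\Theta_\C(x)=[u_0]$.

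This last equality --- in the fully explicit situation where $\C_{\mathrm{id}}\cong C(S^1)$, $x=[1_{C(S^1)}]$, and $C^*(\Z)\cong C(S^1)$ with $u_0$ the generator $z$ --- is where the normalization of the $K$-cycle $y$ enters, and is the only genuine point. I would argue it by exactness: comparing the description of $\Theta_\C$ above with the description of the Pimsner--Voiculescu boundary in terms of the Thom--Connes isomorphism (both recalled above) gives $\partial\circ\Theta_\C=ev_*$, where $ev_*\colon K_0(\C_{\mathrm{id}})\to\inv K_0(\C)$ is induced by evaluation at $0$ and $\partial\colon K_1(C^*(\Z))\to\inv K_0(\C)$ is the Pimsner--Voiculescu boundary. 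Since $K_1(\C)=0$, the mapping torus exact sequence makes $ev_*$ an isomorphism with $ev_*(x)=[1_\C]$, and the Pimsner--Voiculescu exact sequence makes $\partial$ an isomorphism; the classical computation of the Pimsner--Voiculescu boundary for $C^*(\Z)$ gives $\partial[u_0]=[1_\C]$, whence $\Theta_\C(x)=\partial^{-1}(ev_*(x))=[u_0]$. The one place I expect to need care is the sign bookkeeping in this last comparison, which has to be checked against the orientation conventions fixed earlier. Combining the three steps yields $\Theta(x_e)=[1-e+e\cdot u]$.
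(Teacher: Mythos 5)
Your proof is correct and follows essentially the same route as the paper: both reduce to $A=\C$ via the equivariant map $\C\to A;\,\lambda\mapsto\lambda e$ and naturality of the mapping torus isomorphism. The only difference is the base case, which the paper simply cites from Valette's book (Example 6.1.6) while you rederive it from the Pimsner--Voiculescu boundary; your derivation is sound, modulo the sign convention you yourself flag.
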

\begin{proof}
The invariant projector $e$ gives rise to an equivariant map 
$\C\to A;\, z\to ze$ and hence to a homomorphism $C(\mathbb{T})\to\, A_\beta$. By naturality of the mapping torus, this amounts to prove
the result for $A=\C$ which is done in \cite[Example 6.1.6]{valette}.
\end{proof}

\begin{lemma}Let $A$ be a unital $C^*$-algebra together with an
  automorphism $\beta$
and let $x$ be an element in $K_*(A)$. The two following
  elements then coincide: 
\begin{itemize}
\item the image of $x$ under
  the composition $$K_*(A)\stackrel{\cong}{\to}K_{*+1}(C((0,1),A))\to
  K_{*+1}(A_\beta)\to  K_{*+1}(A\rtimes_\beta\Z),$$ where
\begin{itemize}
\item the first map is the Bott periodicity isomorphism;
\item the second map is induced by the inclusion
  $C((0,1),A)\hookrightarrow A_\beta$;
\item the third map is the mapping torus isomorphism.
\end{itemize}
\item  the image of $x$ under the map 
$K_*(A)\to K_*(A\rtimes_\beta\Z)$ induced by the inclusion
$A\hookrightarrow A\rtimes_\beta\Z$.
\end{itemize}
\end{lemma}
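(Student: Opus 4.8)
The plan is to compute the first of the two elements by tracing it through the $KK$-theoretic description of the mapping torus isomorphism recalled above, and to observe that everything coming from ``the interval'' cancels against the Dirac class by Bott periodicity for $\R$, leaving exactly the map induced by the inclusion $\iota\colon A\hookrightarrow A\rtimes_\beta\Z$. Throughout, write $b\colon K_*(A)\xrightarrow{\cong}K_{*+1}(C_0((0,1),A))$ for the suspension (``Bott'') isomorphism, $j\colon C_0((0,1),A)\hookrightarrow A_\beta$ for the inclusion of the kernel ideal of $ev$, and $\mathrm{MT}\colon K_n(A_\beta)\xrightarrow{\cong}K_{n+1}(A\rtimes_\beta\Z)$ for the mapping torus isomorphism; so the assertion is that $\mathrm{MT}\circ j_*\circ b=\iota_*$ as homomorphisms $K_*(A)\to K_*(A\rtimes_\beta\Z)$.

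The first step is to identify $\mathrm{MT}\circ j_*$ as a Kasparov product. Recall that $\mathrm{MT}$ is the isomorphism induced by the Morita equivalence $A_\beta\sim C_0(\R,A)\rtimes\Z$ (diagonal $\Z$-action: translation on $\R$ and $\beta$ on $A$) followed by the right Kasparov product with $J_\Z(\tau_A(y))\in KK_1(C_0(\R,A)\rtimes\Z,A\rtimes_\beta\Z)$, where $y\in KK^\Z_1(C_0(\R),\C)$ is the class of $\imath\frac{d}{dt}$ on $L^2(\R)$. Under this Morita equivalence the kernel ideal $C_0((0,1),A)\subset A_\beta$ is carried onto the $\Z$-invariant ideal $C_0(\R\smallsetminus\Z,A)\rtimes\Z\subset C_0(\R,A)\rtimes\Z$, and $j$ onto the inclusion of that ideal, which is $J_\Z$ of the equivariant inclusion $C_0(\R\smallsetminus\Z,A)\hookrightarrow C_0(\R,A)$. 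By multiplicativity of the descent $J_\Z$ the product of this class with $J_\Z(\tau_A(y))$ is $J_\Z$ of the restriction of $\tau_A(y)$ to the ideal $C_0(\R\smallsetminus\Z,A)$. Since $\R\smallsetminus\Z\cong\Z\times(0,1)$ $\Z$-equivariantly (translation on the $\Z$-factor), this restriction is the exterior product of the canonical class in $KK^\Z_0(C_0(\Z,A),A)$ with the Dirac class $y_{(0,1)}\in KK_1(C_0((0,1)),\C)$ of one interval; by the Green--Julg compatibility of descent with induction (the action of $\Z$ on $\Z$ being free and transitive), the former descends, after the canonical Morita equivalence $C_0(\Z,A)\rtimes\Z\sim A$, to the class of $\iota$. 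Altogether, under the induced Morita equivalence $C_0(\R\smallsetminus\Z,A)\rtimes\Z\sim C_0((0,1),A)$, the map $\mathrm{MT}\circ j_*$ becomes the right Kasparov product with $\tau_A(y_{(0,1)})\otimes_A[\iota]\in KK_1(C_0((0,1),A),A\rtimes_\beta\Z)$, where $[\iota]\in KK_0(A,A\rtimes_\beta\Z)$ is the class of $\iota$.

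It remains to precompose with $b$. Writing $b$ as the right Kasparov product with $\tau_A(\beta_{(0,1)})$, where $\beta_{(0,1)}\in KK_1(\C,C_0((0,1)))$ is the Bott element, one gets for $x\in K_*(A)$
$$(\mathrm{MT}\circ j_*\circ b)(x)=x\otimes_A\tau_A(\beta_{(0,1)})\otimes_{C_0((0,1),A)}\tau_A(y_{(0,1)})\otimes_A[\iota]=x\otimes_A\tau_A\!\bigl(\beta_{(0,1)}\otimes_{C_0((0,1))}y_{(0,1)}\bigr)\otimes_A[\iota]=x\otimes_A[\iota]=\iota_*(x),$$
the third equality being Bott periodicity for $\R$, which says precisely that $\beta_{(0,1)}\otimes_{C_0((0,1))}y_{(0,1)}=1_\C$. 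This proves the claim.

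The part needing genuine care is the second paragraph: one must check that the Morita equivalence actually used to define $\mathrm{MT}$ does carry the kernel ideal and the map $j_*$ onto the asserted ideal and Kasparov product, and — as always in this circle of results — arrange the sign conventions for the Bott class $\beta_{(0,1)}$ and for the Dirac class $y$ compatibly, so that the cancellation yields $+\iota_*$ rather than $-\iota_*$. A softer alternative is to note that the six-term exact sequence of $0\to C_0((0,1),A)\to A_\beta\xrightarrow{ev}A\to 0$, after rewriting its first term via $b$ and its middle term via $\mathrm{MT}$, is the Pimsner--Voiculescu exact sequence of $A\rtimes_\beta\Z$ (one verifies that the connecting map becomes $\mathrm{id}-\beta_*$), in which the homomorphism out of $K_{*+1}(A)$ is by construction $\iota_*$; combined with Lemma \ref{lem-inv} to pin down the identification on the invariant classes, this again gives $\mathrm{MT}\circ j_*\circ b=\iota_*$.
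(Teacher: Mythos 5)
Your argument is correct and follows essentially the same route as the paper: both express the composition as a right Kasparov product built from the $A_\beta\sim C_0(\R,A)\rtimes\Z$ Morita equivalence and the Dirac class $y$, restrict $y$ along $\R\setminus\Z\cong\Z\times(0,1)$ to split off an interval Dirac class times the class of the inclusion, and cancel that interval class against the Bott/boundary class (the paper's identity $[\partial]\otimes_{C_0(0,1)}y'=1$ is your $\beta_{(0,1)}\otimes_{C_0((0,1))}y_{(0,1)}=1_\C$). The step you flag as needing care is exactly what the paper handles by spelling out the naturality properties of the imprimitivity bimodules $\cE(B,X)$; otherwise the two proofs coincide.
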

 \begin{proof}
Let us first describe the imprimity bimodule implementing the Morita
equivalence between $A_\beta$ and $C_0(\R,A)\rtimes\Z$. Indeed, in a
more general setting, if
\begin{itemize}
\item $X$ is a locally compact space equipped with a proper  action of $\Z$
  by  homeomorphisms,
\item $B$ is a $C^*$-algebra provided with an action of $\Z$ by
  automorphisms;
\item  $B_X^\Z$ stands for the  algebra of equivariant continuous maps
  $f:X\to B$ such that $\,\Z.x\mapsto \|f(x)\|$ belongs to $C_0(X/\Z)$.
\end{itemize}
then, if we equip  $C_0(X,B)\cong C_0(X)\otimes B$ with the diagonal
action of $\Z$, there is an imprimitivity $B_X^\Z-C_0(X,B)\rtimes\Z$-bimodule
defined in the following way:
 let us consider on $C_c(X,B)$ the
$C_0(X,B)\rtimes\Z$-valued inner product
\begin{equation*}
\langle \xi,\xi'\rangle(n)=n(\xi^*)\xi',
\end{equation*}
for $\xi$ and $\xi'$ in $C_c(X,B)$ and $n$ in $\Z$. This inner product
is namely positive and gives rise to    a right
$C_0(X,B)\rtimes\Z$-Hilbert module $\cE(B,X)$, the action of
$C_0(X,B)\rtimes\Z$ on the right being given for $\xi$ in $C_c(X,B)$
and $h$ in $C_c(\Z\times X,B)\subset C_c(\Z,C_0(X,B))$ by 
\begin{equation*}
\xi\cdot h(t)=\sum_{n\in\Z}n(\cE(n+t))n(h(n,n+t)).
\end{equation*}
The action by pointwise multiplication of $B_X^\Z\subset
C_b(X,B)$  on $C_0(X,B)$ extends to a left  $B_X^\Z$-module structure
on  $\cE(B,X)$. Let us denote by $[\cE(B,X)]$ the class of the
$B_X^\Z-C_0(X,B)\rtimes\Z$-bimodule  $\cE(B,X)$ in
$KK_*(B_X^\Z,C_0(X,B)\rtimes\Z)$. It is straightforward to check
that 
\begin{itemize}
\item $[\cE(B,X)]$ is natural in both variable, in particular, if $Y$
  is an open invariant subset of $X$ and let us denote  by
  $\iota_{Y,X,B}:C_0(Y,B)\to C_0(X,B)$ and
  $\iota^\Z_{Y,X,B}:B_Y^\Z\to B_X^\Z$ the homomorphisms
  induced by the inclusion $Y\hookrightarrow X$ and  respectively by 
 $[\iota_{Y,X,B}]$ and $[\iota^\Z_{Y,X,B}]$ the corresponding classes  
 in $KK^\Z_*(C_0(Y,B),C_0(X,B))$ and $KK_*(B_Y^\Z,B_X^\Z)$,
 then
$$[\iota^\Z_{Y,X,B}]\otimes_{B_X^\Z}[\cE(B,X)]=[\cE(B,Y)]\otimes_{C_0(Y,B)}J_\Z([\iota_{Y,X,B}]).$$
\item up to the identification $B_\Z^\Z\cong B$, the class
  $[\cE(B,\Z)]$ is induced by the composition  $$B\longrightarrow
  C_0(\Z,B)\hookrightarrow  C_0(\Z,B)\rtimes
  \Z$$ where the first map is $b\mapsto \delta_{0}\otimes b$.
\item if $V$ is any locally compact space, and  if we consider $\Z$
  acting trivially on it, then up to the identifications $B_{X\times
    V}^\Z\cong B_{X}^\Z\otimes C_0(V)$ and $C_0(X\times V,B)\rtimes
  \Z\cong C_0(X,B)\rtimes\Z\otimes C_0(V)$,
  we have $[\cE(B,V\times X)]=\tau_{C_0(V)}([\cE(B, X)])$.
\end{itemize}
Noticing that for a $C^*$-algebra $A$ provided with an automorphism
$\beta$, we have a natural identification  $A_\beta\cong
A_\R^\Z$, the mapping torus isomorphism of equation
(\ref{equ-mt-iso})  is obtained by right  Kasporov product with $[\cE(A,
\R)]\otimes_{C_0(\R,A)\rtimes\Z}J_\Z(\tau_A([y]))$.
  From this, we see that the composition in the statement of the
  lemma is given by right Kasparov product with
\begin{equation}\label{kasp_prod}
z=\tau_{A}([\partial])\otimes_{C_0((0,1),A)}[\iota^\Z_{(0,1)\times\Z,\R,A}]\otimes_{A_\beta}[\cE(A,
\R)]\otimes_{C_0(\R,A)\rtimes\Z}J_\Z(\tau_A(y))
\end{equation}
where
\begin{itemize}
\item $[\partial]$ in $KK_1(\C,C_0(0,1))$ is the boundary of the
  evaluation at $0$  extension
  $$0\to C_0(0,1)\to C_0[0,1)\to \C\to 0;$$
\item we have used the identification $(0,1)\times \Z\cong\R\setminus
  \Z$ to see  $(0,1)\times \Z$ as an invariant open subset of $\R$, in
  particular we have $A_{(0,1)\times\Z}^\Z\cong C_0((0,1),A)$.
\end{itemize}
According to the naturality properties of $[\cE(\bullet,A)]$ listed
above, we get that
\begin{eqnarray}\label{equ-kasp-prod}
\nonumber[\iota^\Z_{(0,1)\times\Z,\R,A}]\otimes_{A_\beta}[\cE(\R,A)]&=&[\cE((0,1)\times\Z,A)]\otimes_{C_0(\Z,A)\rtimes\Z\otimes
C_0(0,1)}J_\Z([\iota_{(0,1)\times\Z,\R,A}])\\
&=&\tau_{C_0(0,1)}([\cE(\Z,A)])\otimes_{C_0(\Z,A)\rtimes\Z\otimes
C_0(0,1)}J_\Z([\iota_{(0,1)\times\Z,\R,A}])\end{eqnarray}

Using commutativity of exterior Kasparov product, we get from equation
(\ref{equ-kasp-prod}) that
\begin{equation}
z=[\cE(\Z,A)]\otimes_{C_0(\Z,A)\rtimes\Z}J_\Z(\tau_{C_0(\Z,A)}([\partial])\otimes_{C_0((0,1)\times\Z,A)}[\iota_{(0,1)\times\Z,\R,A}]\otimes
\tau_A(y)).
\end{equation}
Let $y'$ be the element of $KK_1(C_0(0,1),\C)$ corresponding in the
unbounded picture to the unbounded operator $\imath\frac{d}{dt}$ on
$L^2(0,1)$ and let $[\mathcal{F}]$ be the element of $KK^\Z(C_0(\Z),\C)$
corresponding to the equivariant representation by compact operator of $C_0(\Z)$ onto
$\ell^2(\Z)$ (equipped with the left regular representation) given by
pointwise multiplication. Then it is straightforward to check that
$$[\iota_{(0,1)\times\Z,\R,A}]\otimes_{C_0(\R,A)}\tau_A(y)=
\tau_A(\tau_{C_0(\Z)}(y')\otimes_{C_0(\Z)}[\mathcal{F}]).$$But
it is a standard fact that $[\partial]\otimes_{C_0(0,1)}y'=1$ in the ring
$KK_0(\C,\C)\cong\Z$ and hence we eventually  get that
$$z=[\cE(A,\Z)]\otimes_{C_0(\Z,A)\rtimes\Z}J_\Z(\tau_A([\mathcal{F}])).$$
A direct inspection of the right  hand side of this equality shows that
$z$  is indeed the class of $KK_*(A,A\rtimes\Z)$ induced by the
inclusion $A\hookrightarrow A\rtimes\Z.$\end{proof} Recall that in
section  \ref{section_K_th}, we have established isomorphisms 
\begin{equation} \label{equ-id-k0} K_0(C(X^G_{\mathcal{P}(w)})\rtimes G)\cong   \co
C(Z_w,\Z[1/2])\oplus \inv C(Z_w,\Z)\end{equation}
and
\begin{equation} \label{equ-id-k1} K_1(C(X^G_{\mathcal{P}(w)})\rtimes G)\cong
\co C(Z_w,\Z).\end{equation} Under this identification, and 
using lemma
\ref{lem-inv} and twice lemma \ref{lem-coinv}, we are now in position
to describe the image of the generators of $K^*(X^G_{\mathcal{P}(w)})$
under  the double Thom-Connes isomorphism.
\begin{corollary}
Under the identification of equations
(\ref{equ-k0-hull}), (\ref{equ-k1-hull}), (\ref{equ-id-k0}) and (\ref{equ-id-k1}), the double
Thom-Connes isomorphism
$$K^*(X^G_{\mathcal{P}(w)})\stackrel{\cong}{\longrightarrow}K_*(C(X^G_{\mathcal{P}(w)})\rtimes
G)$$ corresponds to the identity maps of  $\co
C(Z_w,\Z[1/2])\oplus \inv C(Z_w,\Z)$ and $\co C(Z_w,\Z)$.
\end{corollary}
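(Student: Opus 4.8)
The plan is to follow each of the three families of generators exhibited in Section \ref{section_K_th} and in equations (\ref{equ-k0-hull})--(\ref{equ-k1-hull}) through the double Thom--Connes isomorphism, using Lemma \ref{lem-inv} and the lemma following it. Recall from the discussion preceding the statement that, once $C(X^G_{\mathcal{P}(w)})\rtimes G$ is viewed as an iterated crossed product by $\Z$ as in Section \ref{sec-c*-tiling} and $X^G_{\mathcal{P}(w)}$ as the double suspension of equation (\ref{equ-double-suspension}), the isomorphism $K^*(X^G_{\mathcal{P}(w)})\stackrel{\cong}{\to}K_*(C(X^G_{\mathcal{P}(w)})\rtimes G)$ is the composite of two mapping torus isomorphisms, the second one up to stabilisation. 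Thus it is enough to track the generators through these two stages.

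First I would treat the summand $\co C(Z_w,\Z)\cong K^1(X^G_{\mathcal{P}(w)})$ and the summand $\co C(Z_w,\Z[1/2])$ of $K^0(X^G_{\mathcal{P}(w)})$. On the topological side these are obtained, via Bott periodicity, from classes pulled back along the inclusions of the open cubes $Z_w\times(0,1)$ and $\Omega\times Z_w\times(0,1)^2$ into $X^G_{\mathcal{P}(w)}$. Applying the lemma following Lemma \ref{lem-inv} once, respectively twice (once for each $(0,1)$-factor), the composite ``Bott isomorphism, then inclusion of the open subset, then (double) mapping torus isomorphism'' coincides with the map induced by the inclusion $A\hookrightarrow A\rtimes\Z$ at the relevant stage. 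This is exactly the description of the generators of $K_*(C(X^G_{\mathcal{P}(w)})\rtimes G)$ in Theorem \ref{thm-k0-gl} and in the subsequent theorem computing $K_1$: there the generators are the images under the inclusion $C_r^*(\G,\lambda)\hookrightarrow C_r^*(\G,\lambda)\rtimes_{\tilde{\alpha_w}}\Z$ of the classes $[\chi_E\otimes u+(1-\chi_E)\otimes 1]$ and $[\chi_E\otimes\Theta_{f,f}\otimes\chi_{F_{n,0}}]$, while $[u]\in K_1(C(\Omega)\rtimes\Z)$, $[\chi_{F_{n,0}}]\in K_0(C(\Omega)\rtimes\Z)$ and $[\Theta_{f,f}]\in K_0(\K)$ are in the odometer direction precisely the images, under the relevant mapping torus/Bott maps, of the fundamental classes of the $(0,1)$-factors, this last point being the content of the isomorphisms (\ref{equ-mt1}) and (\ref{equ-mt2}) and of Lemma \ref{lem-coinv}.

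Next I would treat the summand $\inv C(Z_w,\Z)$ of $K^0(X^G_{\mathcal{P}(w)})$, generated by the classes $\chi_E$ for $E$ a $\sigma$-invariant compact-open subset of $Z_w$, which on the hull side is the class in $K^0(Z_w)=K_0(C(Z_w))$ of the projector $\chi_{\widetilde E}$. Since $\chi_{\widetilde E}$ is an invariant projector for the automorphisms defining both suspension directions, Lemma \ref{lem-inv} applies at each stage and identifies the image of $\chi_E$ under the double mapping torus isomorphism with the class of the unitary $1-\chi_{\widetilde E}+\chi_{\widetilde E}\cdot u$ in the appropriate crossed product; by the description of the section $\upsilon$ at the end of Section \ref{section_K_th}, namely $\upsilon(\chi_E)=\chi_{\widetilde E}e$, this is exactly the chosen lift of $\chi_E\in\inv C(Z_w,\Z)$ in the short exact sequence of Theorem \ref{thm-k0-gl}. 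Together with the previous paragraph this accounts for all generators, and the mutual compatibility of the identifications of equations (\ref{equ-k0-hull}), (\ref{equ-k1-hull}), (\ref{equ-id-k0}) and (\ref{equ-id-k1}) then forces the double Thom--Connes isomorphism to be the claimed identity maps. The statement for the Chern character follows by running the same route in C\v ech cohomology, using the naturality of the Chern character quoted above and its compatibility with Bott periodicity and with cup product by the fundamental class of $\check{H}_c^1((0,1),\Z)$.

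\textbf{Main obstacle.} The real difficulty is bookkeeping rather than any new computation: at each of the two mapping-torus stages one must keep track of which suspension direction is being used, which $(0,1)$-factor contributes the Bott class, and where the stabilisation by $\K$ enters --- the algebra $\K(L^2([0,1]))$ being an artefact of the Morita equivalence realising $C_r^*(\G,\lambda)$ in Section \ref{sec-penrose-uncolor}, so that $\Theta_{f,f}$ must be matched not with a Bott generator but with a rank-one representation of the type ``$[\mathcal F]$'' appearing in the proof of the lemma following Lemma \ref{lem-inv}. One has to check that the two sides use the same Morita equivalences and the same orientation conventions for the Thom--Connes isomorphism; once this is arranged, each step is a direct application of the two lemmas and of the explicit generator descriptions.
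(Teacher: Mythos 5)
Your overall strategy --- tracking the generators through the two mapping torus isomorphisms by means of Lemma \ref{lem-inv} and the lemma that follows it --- is the same as the paper's, but as written it has a genuine gap at the point where the two stages are composed. The lemma following Lemma \ref{lem-inv} identifies, for a \emph{single} pair (suspension of an algebra / crossed product by the corresponding $\Z$-action), the composite ``Bott periodicity, then inclusion of the open cylinder, then mapping torus isomorphism'' with the map induced by $A\hookrightarrow A\rtimes_\beta\Z$. Applying it ``once per $(0,1)$-factor'' implicitly assumes that the first Thom--Connes isomorphism (for the translation action of $\R$) carries the Bott-plus-open-cube-inclusion map attached to the \emph{second} (dilation) suspension direction of $X^G_{\mathcal{P}(w)}$ onto the corresponding Bott-plus-inclusion map for the mapping torus structure $C(X^G_{\mathcal{P}(w)})\rtimes\R\cong C_r^*(\G,\lambda)_{\tilde{\alpha_w}}$. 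This is exactly the commutativity of the bottom right square of the paper's diagram (\ref{diag-TC}), and it is not a formal consequence of either lemma: the $\R$-action induced on $X^\cN_{\mathcal{P}}\times Z_w\times(0,1)$ by its inclusion into $X^G_{\mathcal{P}(w)}$ is the rescaled action $(\mathcal{T},\omega',s,t)\mapsto(\mathcal{T}+2^{-s}t,\omega',s)$, and one needs the explicit groupoid isomorphism $(0,1)\times\G\to(X^\cN_{\mathcal{P}}\times Z_w\times(0,1))\rtimes\R$, $(\mathcal{T},\omega',s,t)\mapsto(\mathcal{T},\omega',s,2^st)$, together with the naturality of the Thom--Connes isomorphism with respect to the boundary maps of the equivariant Bott extensions (\ref{equ-bott}) and (\ref{equ-bott-equ}), to close this square. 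That verification is where most of the actual work in the paper's proof lies, and your proposal does not address it.

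A second, smaller discrepancy concerns the summand $\inv C(Z_w,\Z)$. Lemma \ref{lem-inv} sends an invariant \emph{projector} to the class of a unitary in $K_1$ of the crossed product; to land back in $K_0$ after the second stage you would need an analogous statement for invariant unitaries of the form $1-e+e\cdot u$, which is not among the stated lemmas. The paper instead handles this summand by the explicit Pimsner--Voiculescu boundary computation at the end of Section \ref{section_K_th}, following the generator of $K_0(C^*(\R)\rtimes\R_+^*)$ given by a rank one projector $e$ through the chain of identifications; this is what produces the lift $\chi_{\widetilde{E}}e$ and shows that the $\inv C(Z_w,\Z)$ component is respected. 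Either supply a $K_1$-analogue of Lemma \ref{lem-inv} or revert to the paper's boundary-map argument for this summand.
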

\begin{proof}The statement concerning the factor $\inv C(Z_w,\Z)$  is
  indeed a consequence of the discussion at the end of section
  \ref{section_K_th}.
Before proving the statements concerning the factors  $\co
C(Z_w,\Z)$ and $\co
C(Z_w,\Z[1/2])$, we  sum up for convenience of the reader the main features,
described in sections \ref{exemple} and \ref{sec-c*-tiling} of the dynamic of the continuous
hull for the coloured and the  uncoloured Penrose hyperbolic tilings.
\begin{itemize}
\item   the closure $X^\cN_{\mathcal{P}}$ of
  $\cN_{\mathcal{P}}\cdot\mathcal{P}$ in $X^G_{\mathcal{P}}$ for the tiling topolology of the Penrose hyperbolic tiling
  $\mathcal{P}$ is homeomorphic to the suspension of the homeomorphism
  $o:\Omega\to\Omega;\, \omega\mapsto \omega+1$;
\item the continuous hull  $X^G_{\mathcal{P}(w)}$ of the coloured Penrose hyperbolic tiling
  $\mathcal{P}(w)$ is homeomorphic to the suspension of the homeomorphism
  $X^\cN_{\mathcal{P}}\times Z_\omega\to X^\cN_{\mathcal{P}}\times Z_\omega:\,
      (\mathcal{T},w')\mapsto(R\cdot\mathcal{T},\sigma(w'))$;
 \item If we provide  $X^\cN_{\mathcal{P}}\times Z_\omega$ with the
     diagonal action of $\R$, by translations on $X^\cN_{\mathcal{P}}$
       and trivial on  $Z_\omega$, and equip the groupoid
       $\mathcal{G}=(X^\cN_{\mathcal{P}}\times Z_\omega)\rtimes \R$
         with the Haar system arising from the Haar measure of $\R$,
         then $C(X^G_{\mathcal{P}(w)})\rtimes
\R$ is the mapping torus  of $C_r^*(\mathcal{G},\lambda)$ with respect
to the automorphim $\tilde{\alpha_\omega}$   arising from the automorphism of groupoid
$\mathcal{G}\to\mathcal{G};\,(\mathcal{T},w',t)\mapsto(R\cdot\mathcal{T},\sigma(\omega'),2t)$
(see section \ref{sec-c*-tiling});
\item  $C_r^*(\mathcal{G},\lambda)$ is Morita-equivalent to the crossed
  product $C(\Omega\times Z_\omega)\rtimes\Z$ for the action of $\Z$
  on $C(\Omega\times Z_\omega)$ arising from $o\times Id_{Z_\omega}$
  (see section \ref{sec-penrose-uncolor}).
\end{itemize}
Let us consider the following diagram:
\begin{equation}\label{diag-TC}\begin{CD}
@.  K_i(C_r^*(\mathcal{G},\lambda))@>>>K_i(C(X^G_{\mathcal{P}(w)})\rtimes
G)\\
@.  @A=AA  @AATCA\\
K_i(C(\Omega\times Z_w))@>>> K_i(C_r^*(\mathcal{G},\lambda))@>>>K_{i+1}(C(X^G_{\mathcal{P}(w)})\rtimes
\R)\\
@A=AA @AATCA   @AATCA\\
K_i(C(\Omega\times Z_w))@>>>K_{i+1}(C(X^\cN_{\mathcal{P}}\times Z_w))@>>>K_{i}(C(X^G_{\mathcal{P}(w)}))
\end{CD},\end{equation}
where
\begin{itemize}
\item the bottom and the right middle  horizontal arrows  are the maps
  defined for any $C^*$-algebra $A$ provided by an automorphism
  $\beta$ as the composition 
$$K_i(A)\to K_{i+1}(C_0((0,1),A)\to K_{i+1}(A_\beta)$$ of the Bott
peridocity isomorphism with the homomorphism induced in $K$-theory
by  the inclusion $C_0((0,1),A)\hookrightarrow A_\beta$;
\item the left  middle horizontal arrow is up to the Morita
  equivalence between $C_r^*(\mathcal{G},\lambda)$  and 
  $C(\Omega\times Z_\omega)\rtimes\Z$   induced by the inclusion
  $C(\Omega\times Z_\omega)\hookrightarrow C(\Omega\times
  Z_\omega)\rtimes\Z$;
\item the top  horizontal arrow is up to the Morita equivalence
  between
  $C_r^*(\mathcal{G},\lambda)\rtimes_{\tilde{\alpha_\omega}}\Z$ and $C(X^G_{\mathcal{P}(w)})\rtimes
G$ is induced by the inclusion
$C_r^*(\mathcal{G},\lambda)\hookrightarrow C_r^*(\mathcal{G},\lambda)\rtimes_{\tilde{\alpha_\omega}}\Z$.
\item the vertical maps $TC$ stand for the Thom-Connes isomorphisms.
\end{itemize}
Then the inclusion $\co C(Z_w,\Z[1/2])\hookrightarrow
K^0(X^G_{\mathcal{P}(w)})$  of equation
(\ref{equ-k0-hull}) is induced by  the
composition of the bottom arrows and
the inclusion   $\co C(Z_w,\Z[1/2])\hookrightarrow
K_0(C(X^G_{\mathcal{P}(w)})\rtimes G)$ of equation (\ref{equ-id-k0}) 
 is induced by the upper staircase. According to lemma \ref{lem-inv},
 the left bottom and the right top
 squares are commutative. Hence, the proof of the statements regarding
 to the $\co C(Z_w,\Z[1/2])$  summand  amounts
 to show that the bottom right square is commutative.
To see this, let us equip $X^\cN_{\mathcal{P}}\times Z_w\times [0,1)$
with the action of $\R$ by homeomorphisms 
$$X^\cN_{\mathcal{P}}\times Z_w\times [0,1)\times\R\longrightarrow
X^\cN_{\mathcal{P}}\times Z_w\times
[0,1);\,(\mathcal{T},\omega',s,t)\mapsto
(\mathcal{T}+2^{-s}t,\omega',s).$$
If we restrict this action to $X^\cN_{\mathcal{P}}\times Z_w\times
(0,1)$, then the inclusion $X^\cN_{\mathcal{P}}\times Z_w\times
(0,1)\hookrightarrow X^G_{\mathcal{P}(w)}$ is $\R$-equivariant and the
Bott periodicity isomorphism is the boundary of the equivariant short
exact sequence
\begin{equation}\label{equ-bott}
0\to C_0(X^\cN_{\mathcal{P}}\times Z_w\times
(0,1))\to C_0(X^\cN_{\mathcal{P}}\times Z_w\times
[0,1))\to C_0(X^\cN_{\mathcal{P}}\times Z_w)\to 0
\end{equation} provided by evaluation at $0$. This equivariant short exact sequence gives rise to a short
exact sequence for crossed products
\begin{equation}\label{equ-bott-equ}
0\to C_0(X^\cN_{\mathcal{P}}\times Z_w\times
(0,1))\rtimes\R\to C_0(X^\cN_{\mathcal{P}}\times Z_w\times
[0,1))\rtimes\R\to C_0(X^\cN_{\mathcal{P}}\times Z_w)\rtimes\R\to 0.
\end{equation}
and since the Thom-Connes isomorphism is natural, it intertwins the
corresponding boundary maps and hence we get a commutative diagram

\begin{equation}\label{diag-bott}
\begin{CD}
 K_{i+1}(C(X^\cN_{\mathcal{P}}\times Z_w)\rtimes\R)@>>>K_i(C_0(X^\cN_{\mathcal{P}}\times Z_w\times
(0,1)\rtimes\R)@>>>K_{i}(C(X^G_{\mathcal{P}(w)})\rtimes\R)
\\
  @ATCAA  @AATCA  @AATCA\\
K_i(C(X^\cN_{\mathcal{P}}\times Z_w))@>>>K_{i+1}(C_0(X^\cN_{\mathcal{P}}\times Z_w\times
(0,1))@>>>K_{i+1}(C(X^G_{\mathcal{P}(w)}))
\end{CD},\end{equation}where the left horizontal arrows are induced by the
boundary maps corresponding to the exact sequences of equation
(\ref{equ-bott}) and (\ref{equ-bott-equ}) and the right horizontal arrows
are induced by the equivariant inclusion $X^\cN_{\mathcal{P}}\times Z_w\times
(0,1)\hookrightarrow X^G_{\mathcal{P}(w)}$.
Let us consider the family  groupoids $(0,1)\times\G$ and
$[0,1)\times\G$. Notice that if  $X^G_{\mathcal{P}(w)}\rtimes\R$ is
viewed as the suspension of the groupoid $\G$, then  $(0,1)\times\G$  is the
restriction of $X^G_{\mathcal{P}(w)}\rtimes\R$ to a fundamental
domain. The reduced $C^*$-algebras of these two groupoids are  respectively
$C_0((0,1),C_r^*(\mathcal{G},\lambda)))$ and
$C_0([0,1),C_r^*(\mathcal{G},\lambda))$ and the
automorphism of groupoids $$[0,1)\times\G\to (X^\cN_{\mathcal{P}}\times Z_w\times
[0,1))\rtimes\R;\, (\mathcal{T},\omega',s,t)\mapsto
(\mathcal{T},\omega',s,2^st)$$
gives rise to a commuting diagram
{\tiny{$$\begin{CD}
0@>>>C_0((0,1),C_r^*(\mathcal{G},\lambda))@>>>C_0([0,1),C_r^*(\mathcal{G},\lambda))@>>>C_r^*(\mathcal{G},\lambda)@>>> 0\\
@. @VVV @VVV @VV=V\\
0@>>>C_0(X^\cN_{\mathcal{P}}\times Z_w\times
(0,1))\rtimes\R@>>>C_0(X^\cN_{\mathcal{P}}\times Z_w\times
[0,1))\rtimes\R@>>>C_0(X^\cN_{\mathcal{P}}\times Z_w
)\rtimes\R@>>>0. 
\end{CD}$$}}Using naturality of the boundary map, we see that the
composition of the top  horizontal arrows in diagram (\ref{diag-bott}) is
the composition $$K_i(C_r^*(\mathcal{G},\lambda))\to K_{i+1}(C_0((0,1),C_r^*(\mathcal{G},\lambda))\to K_{i+1}(C(X^G_{\mathcal{P}(w)})\rtimes\R))$$ of the Bott
peridocity isomorphism with the homomorphism induced in $K$-theory by  the inclusion
$C_0((0,1),C_r^*(\mathcal{G},\lambda))\hookrightarrow C(X^G_{\mathcal{P}(w)})\rtimes\R$. This concludes the proof for the
statement concerning the summand $\co C(Z_\omega,\Z[1/2])$. The
statement concerning the summand  $\co C(Z_\omega,\Z)$ is a
consequence of the commutativity of the top square of diagram
\ref{diag-TC} and of lemma \ref{lem-inv} applied to the middle bottom vertical
arrow.
\end{proof}

\section{The  cyclic cocycle associated to a harmonic probabilty}
Recall that according to the discussion ending section
\ref{subsec-ergotic}, a
 probability is  harmonic if and only if it is   $G$-invariant. In this section, we associate to a harmonic probability
a $3$-cyclic cocycle on the smooth crossed product algebra of $\xg\rt
G$. This  cyclic cocycle is indeed  builded from a $1$-cyclic cocycle
on the algebra of smooth (along the leaves)  functions on $\xg$ by
using the analogue in cyclic cohomology  of the Thom-Connes
isomorphism (see \cite{enn}). We
give a description of this cocycle and we discuss an odd version of
the gap-labelling.
\subsection{Review on smooth crossed products}
We collect here results from \cite{enn} concerning smooth crossed products
by an action of $\R$ that we will need later on. 

\smallskip
Let $\af$ be a
Frechet algebra with respect to an increasing  family of semi-norms
$(\|\bullet\|_k)_{k\in\N}$. 
\begin{definition}
A smooth action on $\af$ is a homomorphism $\alpha:\R\to
\operatorname{Aut}\,\af$ such that 
\begin{enumerate}
\item For every $t$ in $\R$ and $a$ in $\af$, the function $t\mapsto\alpha_t(a)$ is
  smooth.
\item For every integers $k$ and $m$, there exist  integers $j$ and $n$
  and  a
  real $C$ such that $\left\|\frac{d^k}{dt^k}\alpha_t(a)\right\|_m\leq
  C(1+t^2)^{j/2}\|a\|_n$ for every $a$ in $\af$. 
\end{enumerate}
\end{definition}
If $\alpha$ is a smooth action on $\af$, then the smooth crossed product
 $\af\rt_\al\R$ is defined as the set of smooth functions 
$f:\R\to\af$   such that   
$$\|f\|_{k,m,n}\stackrel{\text{def}}{=\!=}\sup_{t\in\R}(1+t^2)^{k/2}\left\|\frac{d^m}{dt^m}f(t)\right\|_n<+\infty$$
  for all integers $k,m$ and $n$. The smooth crossed product
  $\af\rt_\al\R$ provided with the family of semi-norm
  $\|\bullet\|_{k,m,n}$ for $k,m$ and $n$ integers together with  the convolution product
$$f*g(t)=\int f(s)\al_s(g(t-s))dt$$ is then a Frechet algebra.
Notice that a smooth action $\al$ on a Frechet algebra $\af$ gives
rise to a bounded derivation $Z_\al$ of $\af\rt_\al\R$ defined by
$Z_\al(f)(t)=tf(t)$ for all $f$ in $\af\rta\R$ and $t$ in $\R$.

\smallskip
Let $\afx$ be the algebra of continuous and smooth along the leaves
functions on $\xg$, i.e  functions whose restrictions to leaves admit
 at all order differential which are continuous as functions on $\xg$.
Let $\be^0$ and $\be^1$ be the two actions of $\R$ on $\afx$
respectively induced by 
$$\R\times\xg\to\xg;\,(t,\T)\mapsto \T+t$$ and
$$\R\times\xg\to\xg;\,(t,\T)\mapsto  R_{2^{t}}\cdot \T .$$ Let $X$ and $Y$
be respectively the vector fields associated to $\be^0$ and $\be^1$.
Then $\afx$ is a Frechet algebra with respect to the family of
semi-norms
$$f\mapsto \sup_\xg|X^kY^l(f)|,$$ where  $k$ and $l$
run through
integers. It is clear that  $\be^0$ is a smooth action on $\afx$. Moreover, 
$$\R\times\afx\rtimes_{\beta^0}\R\to\afx\rtimes_{\beta^0}\R;(t,f)\mapsto [s\mapsto \be^1(f(2^{-t}s))]$$ is an
action of $\R$ on $\afx\rt_{\be^0}\R$ by automorphisms. This action is
not smooth in the previous sense. Nevertheless, the action $\be^1$
satisfies conditions (1),(2) and (3) of \cite[Section 7.2]{enn} with respect
to the family of functions $\rho_n:\R\to\R;t\mapsto 2^{2n|t|}$, where
$n$ runs through integers. In this situation, we can define the smooth
crossed product  $\afx\rt_{\be^0}\R\rt^\rho_{\be^1}\R$ of $\afx\rt_{\be^0}\R$ by $\be^1$ to be the
  set of smooth functions\\
$f:\R\to\afx\rt_{\be^0}\R$   such that   
$$\|f\|_{k,l,m}\stackrel{\text{def}}{=\!=}
\sup_{t\in\R}\rho_k(t)\left\|\frac{d^l}{dt^l}f(t)\right\|_{m}<+\infty$$
  for all integers $k,l$ and $m$ (we have reindexed   for convenience the family of
  semi-norms on $\afx\rt_{\be^0}\R$ using integers). Then   $\afx\rt_{\be^0}\R\rt^\rho_{\be^1}\R$
   provided with the family of semi-norms
  $\|\bullet\|_{k,l,m}$ for $k,l$ and $m$ integers together with  the convolution product
 is  a Frechet algebra. Moreover, this algebra can be viewed as a
 dense subalgebra of $C(\xg)\rtimes G$. As for smooth actions, the action $\be^1$
 gives rise to a derivation $Z_{\be^1}$ of
 $\afx\rt_{\be^0}\R\rt^\rho_{\be^1}\R$ (defined by the same formula).
\subsection{The $3$-cyclic cocycle}
Let $\eta$ be a $G$-invariant probability on $\xg$. Define
$$\tau_{w,\eta}:\afx\times\afx\to\C;\,(f,g)\mapsto\int Y(f)gd\eta.$$Using the Leibnitz
rules and the invariance of $G$, it is straightforward to check that
$\tau_{w,\eta}$ is $1$-cyclic cocycle. 
In \cite{enn} was constructed for a smooth action $\al$ on a Frechet
algebra $\af$ a homomorphism $H_\lambda^n(\af)\to
H_\lambda^{n+1}(\af\rtimes_\al\R)$, where  $H_\lambda^*(\bullet)$ stands
for the cyclic cohomology. This homomorphism 
is indeed induced by a homomorphism at the level of cyclic cocycles
$\sh_\al:Z_\lambda^n(\af)\to Z_\lambda^{n+1}(\af\rt_\al\R)$ and commutes
with the periodisation operator $S$. Hence it gives rise to a
homomorphism in periodic cohomology $HP^*(\af)\to HP^{*+1}(\af\rta\R)$
which turns out to be an isomorphism. This isomorphism is for  periodic
cohomology the analogue of the Thom-Connes isomorphism in $K$-theory.

\smallskip

We give now the description of $\sh_{\be^0}\tau_{w,\eta}$.
Let us define first 
$$\xb:\afx\rt_{\be^0}\R\to\afx\rt_{\be^0}\R$$ and $$\yb:\afx\rt_{\be^0}\R\to\afx\rt_{\be^0}\R$$
respectively by 
$\xb f(t)=X(f)(t)$ and $\yb f(t)=Y(f)(t)$, for all $f$ in
$\afx\rt_{\be^0}\R$ and $t$ in $\R$.
Using the relation $Y\circ\be^0_t=\be^0_t\circ Y-t\ln 2\,\be^0_t\circ X$
and applying the definition of $\sh_\be^0$ (see \cite[section 3.3]{enn}), we get:
\begin{proposition}
For any elements  $f$, $g$ and $h$ in $\afx\rt_{\be^0}\rtimes\R$, we have
\begin{enumerate}
\item \begin{align*}
\sh_{\be^0}\tau_{w,\eta}&(f,g,h)=-2\pi i\eta(\yb f*g*\zb
  h(0)+\zb f*g*\yb h(0)\\-2\pi i\ln 2&(\eta(1/2\zb^2 f*g*\xb
  h(0)+\zb f*\zb g * \xb(h)(0)-1/2\xb f*g*\zb^2(0))
\end{align*}
\item
  $$\sh_{\be^0}\tau_{w,\eta}(\be^1_tf,\be^1_tg,\be^1_th)=\sh_{\be^0}\tau_{w,\eta}(f,g,h)$$
  for all $t$ in $\R$, i.e the cocycle $\sh_{\be^0}\tau_{w,\eta}$  is $\be^1$-invariant.
\end{enumerate}
\end{proposition}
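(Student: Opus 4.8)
The plan is to establish the two assertions separately. Assertion (1) is obtained by specializing the general formula of \cite{enn} for the map $\sh_{\be^0}$ to the one-cyclic cocycle $\tau_{w,\eta}$, the only non-formal ingredient being the commutation relation $Y\circ\be^0_t=\be^0_t\circ Y-t\ln 2\,\be^0_t\circ X$ between the derivation $Y$ and the flow $\be^0$. Assertion (2) is then read off from the explicit formula of (1), using the compatibility of $X$, $Y$ and the derivation $\zb$ with $\be^1$ together with the $\be^1$-invariance of the probability $\eta$.

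For (1): recall from \cite{enn} that for a smooth $\R$-action $\al$ on a Fr\'echet algebra $\af$ with infinitesimal generator the derivation $\delta$, the map $\sh_\al$ sends a one-cyclic cocycle $\tau$ on $\af$ to a two-cyclic cocycle on $\af\rta\R$ whose value on a triple $(f_0,f_1,f_2)$ is an explicit finite $\Z$-linear combination, with coefficients involving powers of $2\pi i$, of expressions $\tau\bigl((Z_\al^{a}f_{\si(0)}*Z_\al^{b}f_{\si(1)}*Z_\al^{c}f_{\si(2)})(0)\bigr)$ where $Z_\al$ is the derivation $g\mapsto(t\mapsto tg(t))$ on $\af\rta\R$, the convolution is evaluated at $0\in\R$, and $\si$ runs over cyclic permutations. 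Here $\af=\afx$ and $\al=\be^0$, so $\delta=X$, $Z_\al=\zb$, and $\tau=\tau_{w,\eta}$ is the cocycle $(a,b)\mapsto\eta(Y(a)b)$. Substituting and expanding $\tau_{w,\eta}$ on convolution products forces one to move the derivation $Y$ across the convolution on $\afx\rt_{\be^0}\R$; by the displayed relation, each such move replaces $Y$ by $\yb$ plus an error term in which a factor $t$ appears (that is, the derivation $\zb$) and $Y$ is replaced by $\xb$, with coefficient $-\ln 2$. Collecting these terms, using the Leibniz rule, the cyclicity of $\tau_{w,\eta}$, the relevant integration-by-parts identities in the crossed-product variable, and the $\be^0$-invariance of $\eta$ (valid since $\eta$ is $G$-invariant), one arrives at the stated identity. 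The real content is bookkeeping: the signs, the factors $\tfrac12$, and the distinction between $\zb^2$ appearing in one slot and $\zb$ appearing in two slots of the convolution. This is the main obstacle, but it is entirely mechanical once the form of the ENN formula is in hand.

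For (2): first note that $\tau_{w,\eta}$ is itself $\be^1$-invariant. Indeed $Y$ generates the flow $\be^1$, so $Y\circ\be^1_t=\be^1_t\circ Y$, and since $\eta$ is $G$-invariant, hence $\be^1$-invariant, one gets $\tau_{w,\eta}(\be^1_tf,\be^1_tg)=\int\be^1_t\bigl(Y(f)g\bigr)\,d\eta=\int Y(f)g\,d\eta=\tau_{w,\eta}(f,g)$. To deduce the $\be^1$-invariance of $\sh_{\be^0}\tau_{w,\eta}$ one applies $\be^1_t$ to each argument in the formula of (1) and uses: that $\yb$ commutes with $\be^1_t$ while $\xb$ and $\zb$ are rescaled by (inverse) powers of $2^{t}$ when commuted past $\be^1_t$ (the rescaling of $\zb$ coming from the reparametrisation of $\R$ built into the $\be^1$-action on $\afx\rt_{\be^0}\R$); that $\be^1_t$ is an algebra automorphism and hence passes through the convolutions; and that evaluation at $0\in\R$ together with the $\be^1$-invariance of $\eta$ absorb the remaining factors. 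One then checks that inside each of the five terms all powers of $2^{t}$ cancel. Equivalently, and more conceptually, this is the naturality of the ENN construction $\sh_{\be^0}$ with respect to the automorphism $\be^1_t$, which intertwines $\be^0$ with itself up to precisely the rescaling encoded in the $\be^1$-action on the crossed product; but verifying that naturality is nothing other than the term-by-term check just described, so the second route carries no real saving over the first.
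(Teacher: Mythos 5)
Your proposal follows exactly the route the paper takes: the paper derives part (1) by ``applying the definition of $\sh_{\be^0}$'' from \cite{enn} together with the commutation relation $Y\circ\be^0_t=\be^0_t\circ Y-t\ln 2\,\be^0_t\circ X$, which is precisely your mechanism for generating the $\yb$, $\zb$ and $\ln 2\cdot\xb$ terms, and it offers no more detail on the bookkeeping than you do. Part (2) is likewise a direct term-by-term invariance check using the $G$-invariance of $\eta$ and the scaling behaviour of $\xb$, $\yb$, $\zb$ under $\be^1_t$, as you describe.
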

According to \cite[Section 7.2]{enn}, the action $\be^1$ on
$\afx\rt_{\be^0}\rtimes\R$
also gives rise to a 
homomorphism
$\sh_{\be^1}:Z_\lambda^n(\afx\rt_{\be^0}\R)\to
Z_\lambda^{n+1}(\afx\rt_{\be^0}\R\rt^\rho_{\be^1}\R)$ which induces an
isomorphim  $HP^*(\afx\rt_{\be^0}\R)\stackrel{\cong}{\longrightarrow}
HP^{*+1}(\afx\rt_{\be^0}\R\rt^\rho_{\be^1}\R)$.
A direct application of the definition of $\sh_{\beta^1}$ leads to
\begin{lemma}
Let $\phi$ be a $\beta_1$-invariant $3$-cyclic cocycle for  $\afx\rt_{\be^0}\R$. Let us
define for any $f,g$ and $h$ in  $\afx\rt_{\be^0}\R\rt^\rho_{\be^1}\R$.
$$\widetilde{\phi}(f,g,h)=2\pi\imath\int_{t_0+t_1+t_2=0}f(t_0)\be^1_{t_0}g(t_1)\be^1_{-t_2}(t_2).$$
Then 
\begin{align*}\sh_{\be^1} \phi(f_0,f_1,f_2,f_3)=-\widetilde{\phi}(f_0,f_1&,f_2*\zbu f_3)+\widetilde{\phi}(\zbu
f_0*f_1,f_2,f_3)\\&-\widetilde{\phi}(f_0,\zbu f_1*f_2,
f_3)-\widetilde{\phi}(\zbu f_0,f_1*f_2,f_3)
\end{align*}
\end{lemma}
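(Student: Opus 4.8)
The statement is essentially a computation: $\sh_{\be^1}$ is \emph{defined} at the cochain level in \cite[Section 7.2]{enn}, and the plan is to substitute a $\be^1$-invariant $\phi$ into that definition and watch it collapse onto the four displayed terms. First I would transcribe from \cite[Section 7.2]{enn} the explicit chain-level formula for $\sh_{\be^1}\psi$ attached to a cyclic cocycle $\psi$ on $\afx\rt_{\be^0}\R$, in the $\rho$-tempered setting (weights $\rho_n(t)=2^{2n|t|}$). That formula is built from two ingredients: the cochain $\widetilde\psi$ on $\afx\rt_{\be^0}\R\rtimes^\rho_{\be^1}\R$ obtained by integrating $\psi$ over the affine simplex $\{t_0+t_1+t_2=0\}$ with the $\be^1$-twists telescoping exactly as in a $\be^1$-convolution --- this is precisely the cochain written out for $\widetilde\phi$ in the statement --- and insertions of the canonical derivation $\zbu$ of the outer crossed product (multiplication by the group variable) into the arguments of $\widetilde\psi$, summed with alternating signs and the combinatorial normalisation of \cite{enn}. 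In full generality the formula carries in addition a correction term built from the Lie derivative of $\phi$ along $\be^1$; this is the exact counterpart, for $\be^1$, of the $\ln 2$-term appearing in the preceding Proposition, which is present there only because the relevant cocycle fails to be $\be^1$-invariant.

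The second step is to invoke the hypothesis. Since $\phi$ is $\be^1$-invariant its Lie derivative along $\be^1$ vanishes, so the correction term drops out, and any $\be^1_t$ occurring inside $\widetilde\phi$ may be moved past an argument of $\phi$ at no cost. What remains is just the $\zbu$-insertion part of the formula. Spelled out in the case at hand, where $\sh_{\be^1}\phi$ takes the four arguments $f_0,f_1,f_2,f_3$, this consists of the three insertions of $\zbu$ into the consecutive convolved pairs --- producing $\widetilde\phi(\zbu f_0\ast f_1,f_2,f_3)$, $\widetilde\phi(f_0,\zbu f_1\ast f_2,f_3)$ and $\widetilde\phi(f_0,f_1,f_2\ast\zbu f_3)$ --- together with one further term, $\widetilde\phi(\zbu f_0,f_1\ast f_2,f_3)$, in which $\zbu$ is placed on $f_0$ alone while $f_1$ and $f_2$ are convolved. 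Using the cyclicity of $\phi$ to align which argument the surviving $\zbu$ lands on after each convolution, and tracking the orientations of the simplices, one reads off the four signs as $-,+,-,-$, which is the displayed identity. No convergence issue arises: inserting $\zbu$ only multiplies the simplex integrand by a factor of $t$, and the $\rho$-temperedness built into the seminorms of $\afx\rt_{\be^0}\R\rtimes^\rho_{\be^1}\R$ absorbs it, so each of the four terms is well defined.

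The delicate part is the bookkeeping of the second step, not any single estimate: one has to extract the precise simplified shape of the \cite[Section 7.2]{enn} formula in the $\rho$-tempered framework, keep the ``symmetric'' family of three insertions separate from the extra term $\widetilde\phi(\zbu f_0,f_1\ast f_2,f_3)$ (which is why one sees four rather than three terms), and pin down the four signs without a slip. Once the \cite{enn} formula has been written out explicitly and the $\be^1$-invariance of $\phi$ has been used to suppress the Lie-derivative term, the remainder is a matter of matching notation.
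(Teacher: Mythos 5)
Your proposal is correct and follows exactly the route the paper takes: the paper's entire proof is the single remark that the identity is ``a direct application of the definition of $\sh_{\be^1}$'' from \cite[Section 7.2]{enn}, and your plan --- write out that chain-level formula, use the $\be^1$-invariance of $\phi$ to suppress the extra terms and move the $\be^1_t$'s freely, then track the $\zbu$-insertions and signs --- is precisely that computation spelled out. The only substantive content beyond the paper is your (correct) observation that $\rho$-temperedness absorbs the factor of $t$ introduced by $\zbu$, so no new ideas are needed.
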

\begin{definition}
With above notations, the $3$-cyclic cocycle on
$\afx\rt_{\be^0}\R\rt^\rho_{\be^1}\R$ associated to the Penrose
hyperbolic tiling coloured by $w$ and to a $G$-invariant
probability $\eta$ on $\xg$ is
$$\phi_{w,\eta}=\sh_{\be^1}\sh_{\be^0}\tau_{w,\eta}.$$
\end{definition}
Notice that if we carry out  this  construction for  a  tiling $\T$ of the Euclidian space with
continuous hull ${{X}}^{\R^2}_\T$ with respect to the $\R^2$-action by
translations, we get taking twice the crossed product by $\R$  a  
 $3$-cyclic cocycle which  is indeed equivalent (via the periodisation
 operator) to the  $1$-cycle cocycle
on $C({{X}}^{\R^2}_\T)\rtimes\R^2\cong (C({{X}}^{\R^2}_\T)\rtimes\R)\rtimes\R$ arising from
the trace on $C({{X}}^{\R^2}_\T)\rtimes\R$ associated to an $\R$-invariant
probability on ${{X}}^{\R^2}_\T$.

\smallskip

 The class $[\phi_{w,\eta}]$ of $\phi_{w,\eta}$ in
$HP^1(\afx\rt_{\be^0}\R\rt^\rho_{\be^1}\R)$ is the image of the class of
$\tau_{w,\eta}$ under the composition of isomorphism
$$HP^1(\afx)\stackrel{\cong}{\longrightarrow}HP^0(\afx\rt_{\be^0}\R)\stackrel{\cong}{\longrightarrow}HP^1(\afx\rt_{\be^0}\R\rt^\rho_{\be^1}\R).$$
Since pairing with periodic cohomology provides linear forms for
$K$-theory groups, the $3$-cyclic cocycle $\phi_{w,\eta}$ provides a linear map
$$\phi_{w,\eta,*}:K^1(\afx\rt_{\be^0}\R\rt^\rho_{\be^1}\R)\to\C;\, x\mapsto \langle
[\phi_{w,\eta}],x \rangle.$$  The main issue in computing
$\phi_{w,\eta,*}(K^1(\afx\rt_{\be^0}\R\rt^\rho_{\be^1}\R))$ is that the Thom-Connes
isomorphism a priori may  not hold for $K^1(\afx\rt_{\be^0}\R)$. If it
{were } the case,  the inclusion 
$\afx\rt_{\be^0}\R\rt^\rho_{\be^1}\R\hookrightarrow C(\xg)\rtimes G$
would induces an isomorphism
$K_1(\afx\rt_{\be^0}\R\rt^\rho_{\be^1}\R)\stackrel{\cong}{\longrightarrow}
K_1(C(\xg)\rtimes G)$ and from this we could get that 
$$\phi_{w,\eta,*}(K_1(\afx\rt_{\be^0}\R\rt^\rho_{\be^1}\R))=\Z[\hat{\eta}]\stackrel{\text{def}}{=\!=}\{\hat{\eta}(E),\,E\text{
    compact-open subset of  }Z_w\},$$ where $\hat{\eta}$ is  the probability on 
$Z_w$ of proposition
\ref{carmesureinv} in one-to-one correspondance with $\eta$.

Since  $\Z[\hat{\eta}]$   is indeed the one dimension gap-labelling
for the subshift  corresponding to $w$, this would  be viewed  as an odd version of the gap labelling. Nevertheless, the right
setting to state this generalisation of the  gap-labelling seems to
be the Frechet algebra and a natural question is whether
  we have 
 $$\{\langle
  [\phi_{w,\eta}],x\rangle;\,x\in
  K_1(\afx\rt_{\be^0}\R\rtimes_{\be^1}^\rho\R) \}=\Z[\hat{\eta}]$$ or
  if the pairing bring in new invariants.

\section*{Aknowledgments}This paper was partially written during a 
visit of the first author at the   Pacific
 Institut for Mathematical Sciences in  Victoria, Canada. He is also 
 indebted  to J. Renault for some very helpful
discussions on  Haar systems for groupoids.

\end{document}